\newcommand{\Z}{\mathbb{Z}}
\newcommand{\R}{\mathbb{R}}
\newcommand{\N}{\mathbb{N}}
\newcommand{\qdim}{\mathrm{qdim}}
\newcommand{\mc}{\mathcal}
\newcommand{\od}{\mathrm{out}}
\newcommand{\id}{\mathrm{in}}
\numberwithin{equation}{section}
\newtheorem{theorem}{Theorem}[section]
\newtheorem{proposition}[theorem]{Proposition}
\newtheorem{lemma}[theorem]{Lemma}
\newtheorem{corollary}[theorem]{Corollary}
\newtheorem{defn}[theorem]{Definition}    
\theoremstyle{remark}
\theoremstyle{remark}
\newtheorem{rmk}[theorem]{Remark}
\newtheorem{example}[theorem]{Example}
\DeclareMathOperator{\Mult}{\mathscr{M}}
\DeclareMathOperator{\SubGraph}{\mathscr{S}ub}
\newcommand{\mpslash}{\raisebox{-.2em}{\begin{tikzpicture}%
\draw[thick] (-.075,-.2) -- (.1,.13);%
\begin{scope}[transform canvas={scale=0.5}]%
\node at (0.2,-.45) {MP\ };%
\end{scope}\end{tikzpicture}}\ }
\newcommand{\mpslashpedice}{\raisebox{-.2em}{\begin{tikzpicture}[scale=.75]%
\draw[thick] (-.075,-.2) -- (.1,.1);%
\begin{scope}[transform canvas={scale=0.35}]%
\node at (0.25,-.45) { \, MP\ };%
\end{scope}\end{tikzpicture}}\  }
\title[Multipath matroids]{Multipath matroids, digraph colourings, and the Tutte polynomial}
\author{Luigi Caputi, Carlo Collari, Sabino Di Trani}
\date{}
\begin{document}
\maketitle

\begin{abstract}
We characterise the digraphs for which the multipaths, that is disjoint unions of directed paths, yield a matroid.
For such graphs, called MP-digraphs, we prove that the Tutte polynomial of the multipath matroid is related to counting certain digraph colourings. 
Finally, we prove that, for MP-forests, the decategorification of the multipath cohomology yields a specialisation of the Tutte polynomial.
\end{abstract}

\section{Introduction} 

Multipaths are disjoint unions of directed paths in a digraph. These objects were introduced in~\cite{turnerwagner} to define an analogue of chromatic homology~\cite{Helme-Guizon-Rong} for directed graphs, while preserving the fact that the homology of (coherently oriented) cyclic (di)graphs recovers (a truncation of) Hochschild homology~\cite{Prz}. With the aim of investigating combinatorial properties of directed graphs by homological means, the authors studied in~\cite{primo} what they called multipath cohomology. 
The theories in \cite{turnerwagner, primo} can be seen as homology theories for posets applied to the \emph{path poset}, i.e.~the set of multipath ordered by inclusion. 
The order complex of the path poset is intimately related with other, well-known, simplicial complexes associated to (undirected) graphs, such as matching complexes \cite{spri2, ramos} and to cycle-free chessboard complexes \cite{VZ}.

In \cite{primo, secondo, terzo}, it was highlighted how combinatorial properties of digraphs reflect on the structure of path posets, and on the topological properties of the associated order complex. Some of the properties of the path posets are somehow reminiscent of other combinatorial structures.
During the conference ``Geometry, Algebra and Combinatorics of Moduli Spaces and Configurations V'' held in 2023 in Dobbiaco, some among the authors were asked about the relationship between matroids and path posets.
Our first result in this direction is a complete characterisation of the digraphs for which the set of multipaths $\Mult$, each of which is seen as the set of its edges, satisfies the axioms of set of independents of a matroid -- whose ground set is $E$, the set of edges of the digraph.

\begin{theorem}\label{thm:intro-multipath-matroid}
Let $G$ be a (finite) digraph, with possibly a finite number of loops on each vertex. Then, the digraph $G$ satisfies both:
\begin{enumerate}[label={\rm (\arabic*)}]
\item  $G$ does not contain (up to reversing the orientation of all edges) one of the digraphs in Figure~\ref{fig:avoiding} as subgraphs,
\item  every coherently oriented cycle 
(loops excluded)
in $G$ is a connected component,
\end{enumerate} 
if and only if the pair $(E(G), \Mult(G))$ is a matroid.
\end{theorem}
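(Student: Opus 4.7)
The set $\Mult(G)$ always contains $\emptyset$ and is closed under subsets, since deleting an edge from a disjoint union of directed paths again yields such a union. The only matroid axiom at stake is therefore augmentation: for $I_1, I_2 \in \Mult(G)$ with $|I_1| < |I_2|$, some $e \in I_2 \setminus I_1$ satisfies $I_1 \cup \{e\} \in \Mult(G)$. The plan is to establish each direction of the theorem by analysing this single axiom.

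For necessity, the plan is to exhibit explicit violations. If some coherently oriented cycle $C = v_0 \to v_1 \to \cdots \to v_{k-1} \to v_0$ meets an external edge, say $f = (u, v_0)$ with $u \notin C$ (the remaining cases being analogous), then $I_1 := C \setminus \{(v_0, v_1)\}$ and $I_2 := (C \setminus \{(v_{k-1}, v_0)\}) \cup \{f\}$ are multipaths of sizes $k-1 < k$, and the only candidates $(v_0, v_1)$ and $f$ in $I_2 \setminus I_1$ are blocked by cycle closure and by the in-edge at $v_0$, respectively. For each shape in Figure~\ref{fig:avoiding}, an analogous short check---halved in number by the orientation-reversal symmetry of the axiom---produces a witness; for instance, for the three-edge pattern $\{(a, b), (a, v'), (u', b)\}$, one takes $I_1 := \{(a, b)\}$ and $I_2 := \{(a, v'), (u', b)\}$.

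For sufficiency, assume (1) and (2). Since $\Mult$ is multiplicative under disjoint union and matroid direct sums are matroids, I would verify augmentation componentwise. By (2), each component is either a coherently oriented cycle---where $\Mult$ coincides with the uniform matroid $U_{k-1,k}$, and augmentation is immediate---or acyclic and satisfying (1). On a non-cycle (hence acyclic) component $H$, take $I_1, I_2 \in \Mult(H)$ with $|I_1| < |I_2|$ and suppose toward contradiction that no $e \in I_2 \setminus I_1$ augments $I_1$. Since $H$ is acyclic, each $e = (u, v) \in I_2 \setminus I_1$ must be blocked by some $f \in I_1$ sharing its tail $u$ or its head $v$; this $f$ cannot lie in $I_2$ (else $I_2$ would carry two edges meeting at a common vertex), yielding a map $\phi \colon I_2 \setminus I_1 \to I_1 \setminus I_2$. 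Since $|I_2 \setminus I_1| > |I_1 \setminus I_2|$, $\phi$ is not injective, so some $f = (a, b) \in I_1 \setminus I_2$ blocks two distinct $e_1, e_2 \in I_2 \setminus I_1$. Both tail-blocks or both head-blocks by $f$ are ruled out by $I_2 \in \Mult$, so up to relabeling $e_1 = (a, v_1)$ and $e_2 = (u_2, b)$ with $v_1 \neq b$, $u_2 \neq a$. The subgraph $\{e_1, f, e_2\}$ then matches, up to orientation reversal (and with the degenerate subcase $v_1 = u_2$ giving the transitive triangle), one of the configurations in Figure~\ref{fig:avoiding}, contradicting (1). The main obstacle is this last identification: one must check that Figure~\ref{fig:avoiding} truly catalogues every pattern produced by a blocker overlap under (2), so that no exceptional case escapes the contradiction.
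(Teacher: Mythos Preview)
Your argument is correct and takes a genuinely different route from the paper.  The paper works with the \emph{circuit axioms} (C1)--(C3): it first identifies the minimal non-multipaths as linear sinks, linear sources, and coherently oriented cycles (Proposition~\ref{prop:minimal}), then for sufficiency verifies (C3) by a case analysis on pairs of such circuits, and for necessity uses that the matroid property is inherited by subgraphs (Remark~\ref{rmk:circ}) to reduce to the small forbidden configurations.  You instead attack the augmentation axiom~\ref{I3} directly.  Your sufficiency argument---reduce to components, dispatch cycle components as $U_{k-1,k}$, and on cycle-free components run the pigeonhole/blocker map~$\phi$---is a nice alternative: the key observation that two edges of $I_2\setminus I_1$ sharing a blocker $f=(a,b)$ must block at opposite ends (lest $I_2$ have a degree violation) and therefore assemble into exactly one of the patterns in Figure~\ref{fig:avoiding} is clean and entirely elementary.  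What the paper's approach buys is the explicit circuit description (Corollary~\ref{cor:circ}), which is reused later; what your approach buys is self-containment, with no detour through the circuit axiomatics.

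One small point to tighten in your necessity argument for~(2): you write ``say $f=(u,v_0)$ with $u\notin C$ (the remaining cases being analogous)'', but the case where the extra edge is a \emph{chord} of $C$ (both endpoints in $C$) is not literally analogous---your construction of $I_2$ needs the external vertex.  This is harmless, since any chord of a coherently oriented cycle produces, together with two consecutive cycle edges, a copy of one of the graphs in Figure~\ref{fig:avoiding}, so that case is already covered by your necessity argument for~(1); but it is worth saying so explicitly.
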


Any digraph $G$ as above which satisfies  both (1) and (2) is called \emph{MP-digraph}. If $G$ is an MP-digraph, we call~$M_G = (E(G), \Mult(G))$ the \emph{multipath matroid} (associated to $G$).
As a direct consequence, if $G$ is an MP-digraph the order complex of its path poset coincides with the matroid complex of $M_G$. Therefore, this complex is pure  and shellable. This should be compared with the computations of \cite{terzo}. 

We investigate further properties of multipath matroids. We show that they are graphic, regular and that, in fact, are direct sums of uniform matroids. 
In addition, we define the MP-contraction of directed graphs. 
It turns out that the family of MP-digraphs is closed under deletion and MP-contraction.
Furthermore, deletion and MP-contraction of MP-digraphs correspond to the deletion and contraction, respectively, of the associated multipath matroids (Proposition~\ref{prop:isodelcontr}).
This is used to obtain a ``deletion/contraction''-type relation for Tutte polynomials of multipath matroids (Corollary~\ref{cor:tuttegraph}). 

 A celebrated theorem of Tutte~\cite{tutte} shows that the chromatic polynomial, and thus the number of colourings, of a graph can be obtained by specializing the Tutte polynomial of the associated graphic matroid. It is then natural to ask whether a similar result is true for MP-digraphs. The analogue of colourings in our setting are flowing colourings (Definition~\ref{defn:Dcolourings}). 
 Then, the Tutte polynomial $T_{M_G}$ of the multipath matroid $M_G$ is related to the number $\tau_G(k)$ of flowing $k$-colourings of any suitable spanning forest in $G$ (Corollary~\ref{cor:tuttecolouring}).

\begin{theorem} 
Let $G$ be an MP-digraph without coherently oriented cycles and let $M_G$ be the associated multipath matroid. Then, for every spanning forest $S$ for $G$ of maximal rank, we have 
\begin{equation*} k^{p_0(G)}T_{M_G}(1-k,0)=(-1)^{r(G)}\tau_S(k) \ ,\end{equation*}
where $p_0(G)$ is the number of connected components of $G$, and $r$ is the rank function of $M_G$.
\end{theorem}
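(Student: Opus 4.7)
The plan is to mimic the classical proof that the chromatic polynomial of an undirected graph is an evaluation of the Tutte polynomial of its graphic matroid, namely a deletion/contraction induction on the number of edges. On the matroid side, Corollary~\ref{cor:tuttegraph} already supplies a deletion/MP-contraction recursion for $T_{M_G}$; specialising at $(x,y)=(1-k,0)$ gives a clean identity that I would match, edge by edge, with a parallel recursion for $\tau_S(k)$.

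Before running the induction I would reduce to a connected MP-digraph whose multipath matroid is a single uniform summand. The excerpt notes that $M_G$ is a direct sum of uniform matroids, so $T_{M_G}(1-k,0)$ factors over that decomposition. I would prove that $\tau_S$ factors analogously, since flowing colourings of a spanning forest should be assigned independently on its connected components. This factorisation also establishes that $\tau_S$ is insensitive to the particular choice of maximal-rank spanning forest: any two such forests differ by basis exchanges within the uniform summands, and these exchanges preserve the count of flowing colourings once the decomposition is in place.

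Next, I would pick an edge $e$ of $G$ and distinguish three cases: $e$ is a loop, $e\in S$, or $e\notin S$. The assumption that $G$ has no coherently oriented cycle ensures that MP-contraction along a non-loop $e$ is well-defined and drops $r(G)$ by exactly one when $e\in S$, while loops contribute a factor of $k$ on the colouring side and are handled by their standard role on the Tutte side. I would compare $\tau_S(k)$ with the flowing colouring count for the spanning forest of $G\setminus e$ or $G/e$ obtained after the operation, matching the recursion for $T_{M_G}(1-k,0)$ from Corollary~\ref{cor:tuttegraph}. The base case is the edgeless digraph, where both sides equal $k^{|V(G)|}$ up to a sign of $(-1)^{0}=1$.

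The main obstacle will be handling the edges $e\notin S$: here $S$ is unchanged by the operation on $G$, yet the matroid side picks up contributions from $T_{M_G\setminus e}$ or $T_{M_G/e}$ that must be absorbed into the prefactors $k^{p_0}$ and $(-1)^{r}$ to match the combinatorial count of flowing $k$-colourings. Verifying that non-basis edges impose only trivial constraints on flowing colourings — so that their contribution cancels as a clean factor of $1$ against the corresponding change in $T_{M_G}(1-k,0)$ — is where the precise definition from Definition~\ref{defn:Dcolourings} will enter essentially, and this is the most delicate point of the argument.
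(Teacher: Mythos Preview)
Your overall strategy --- a deletion/contraction induction matched against Corollary~\ref{cor:tuttegraph} --- is the one the paper uses, but two points in your plan need correction.

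First, the reduction to ``a connected MP-digraph whose multipath matroid is a single uniform summand'' does not work as stated. The uniform summands of $M_G$ correspond to the dynamical modules of $G$ (Corollary~\ref{thm:modulesmatroid}), not to its connected components; distinct dynamical modules share their boundary vertices, so a spanning tree of $G$ does not split as a disjoint union of spanning trees of the modules, and $\tau_S$ has no reason to factor over the module decomposition. The paper reduces only to the connected case (via Theorem~\ref{thm:components} and Equation~\eqref{prop:DSTutte}) and then runs the induction on a connected $G$ that may well have many dynamical modules.

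Second, you have inverted the difficulty of the two main cases. The case $e\notin S$ is the \emph{easy} one: since $S$ has maximal rank and $e\notin S$, the edge $e$ cannot be a coloop, so by Lemma~\ref{lem:treeisth} the MP-contraction $G\mpslash e$ acquires a loop and $T_{M_{G\mpslashpedice e}}(1-k,0)=0$; the deletion/contraction relation collapses to $T_{M_G}(1-k,0)=T_{M_{G\setminus e}}(1-k,0)$, while $S$, $p_0$ and $r$ are all unchanged. The genuinely delicate step is contracting a \emph{coloop} $e\in S$: there one needs the colouring-side recursion $\tau_S(k)=(k-1)\,\tau_{S/e}(k)$, which is not immediate from Definition~\ref{defn:Dcolourings} and is established separately as Lemma~\ref{lem:colouringcontraction} via a leaf-edge induction on the MP-tree $S$. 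The paper organises the argument around this distinction: first prove the tree case (Theorem~\ref{thm:colouringtrees}) using the coloop lemma and a leaf-edge argument for non-coloops; then for general connected $G$ (Theorem~\ref{thm:colST}), contract a coloop if one exists, and otherwise either $G=S$ (tree case) or delete some $e\notin S$. Your case split ``$e\in S$ versus $e\notin S$'' can in principle be made to work, but the subcase ``$e\in S$ and not a coloop of $M_G$'' is awkward --- deletion breaks $S$, MP-contraction creates loops --- and the paper simply avoids it by always choosing $e\notin S$ in that situation.
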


Chromatic homology is an homology theory for (undirected) graphs introduced by Helme-Guizon and Rong~\cite{Helme-Guizon-Rong}, and served as inspiration for the definition of multipath cohomology.
The key property of chromatic homology is that its (graded) Euler characteristic recovers the chromatic polynomial of a graph.
Since chromatic homology -- as well as multipath homology -- is functorial in an appropriate sense, we say that it provides a categorification of the chromatic polynomial.  
Relating classical graph invariants and homology theories for graphs is a vibrant area of research. In recent years, a number of such relations have been discovered and studied; for instance, some classical invariants of graphs which have been categorified are: Tutte polynomial of graphs \cite{zbMATH05118586}, dichromatic polynomial for graphs \cite{zbMATH05313438}, characteristic polynomial of matroids \cite{saito2024categorificationcharacteristicpolynomialmatroids}, chromatic symmetric polynomial \cite{zbMATH06804238}, (evaluations of) the connected domination polynomial \cite{zbMATH07681103}, and magnitude~\cite{zbMATH06826922}, to name a few. 
It is an open question whether the graded Euler characteristic of multipath cohomology is related to any known digraph invariant (see \cite[Question~8.3]{primo}). One of the aims of this paper is to positively answer to this question for MP-forests. 
Given an integral domain $R$, denote by $\chi_{\mu}(G;\alpha)$ be the graded Euler characteristic of the multipath homology computed with respect to the ($\Z$-)graded $R$-algebra $A$ whose graded dimension is $\alpha\in \Z[q,q^{-1}]$ -- see Section~6, and \cite{primo} for the detailed definitions.

\begin{theorem}\label{thm:intro-chi} Let $G$ be an MP-forest. Then,
\begin{equation*} \chi_\mu (G, \alpha) = (-1)^{r(G)} \alpha^{t(G)}T_{M_G}(1-\alpha, 1)  \ , \end{equation*}
where $t(G) = |E(G)| - r(G)+ p_0(G)$.
\end{theorem}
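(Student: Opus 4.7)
The plan is to write both sides as sums over $\Mult(G)$ and match them term by term. On the Tutte side, apply the rank--nullity (Whitney) expansion
\[
T_{M_G}(x,y) \;=\; \sum_{A \subseteq E(G)} (x-1)^{r(G)-r_{M_G}(A)}\,(y-1)^{|A|-r_{M_G}(A)}.
\]
Setting $y=1$, the factor $(y-1)^{|A|-r_{M_G}(A)}$ vanishes unless $A$ is independent in $M_G$; by Theorem~\ref{thm:intro-multipath-matroid} the independents of $M_G$ are exactly the multipaths, so
\[
T_{M_G}(1-\alpha,1) \;=\; \sum_{M \in \Mult(G)} (-\alpha)^{r(G)-|M|}.
\]
Multiplying through by $(-1)^{r(G)}\alpha^{t(G)}$ and absorbing signs into $(-1)^{|M|}$, I would obtain
\[
(-1)^{r(G)}\alpha^{t(G)}\,T_{M_G}(1-\alpha,1) \;=\; \sum_{M \in \Mult(G)} (-1)^{|M|}\,\alpha^{r(G)+t(G)-|M|}.
\]
Since $r(G)+t(G)=|E(G)|+p_0(G)$, and $G$ is a forest so that $|E(G)|+p_0(G)=|V(G)|$, the exponent of $\alpha$ simplifies to $|V(G)|-|M|$.

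On the homological side, I would unpack the definition of multipath cohomology recalled in Section~6 (see also~\cite{primo}). The multipath cochain complex $C^\ast_\mu(G;A)$ decomposes as a direct sum indexed by $\Mult(G)$, where the summand at $M$ sits in cohomological degree $|M|$ and is canonically isomorphic to $A^{\otimes c(M)}$, with $c(M)$ the number of connected components of the spanning subgraph $(V(G), M)$. Since the graded dimension of $A^{\otimes n}$ equals $\alpha^n$, the graded Euler characteristic admits the closed form
\[
\chi_\mu(G,\alpha) \;=\; \sum_{M \in \Mult(G)} (-1)^{|M|}\,\alpha^{c(M)}.
\]
Because $G$ is a forest, every spanning subgraph $(V(G), M)$ is itself acyclic, and hence $c(M)=|V(G)|-|M|$ for each $M \in \Mult(G)$. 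Matching this with the Tutte-side expression above concludes the argument.

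The main obstacle is the second step: reading off the summand of $C^\ast_\mu(G;A)$ at each multipath, and in particular verifying that the assigned tensor power has exponent equal to the number of connected components of $(V(G), M)$. Once this has been extracted from the construction in~\cite{primo}, everything else is routine: the key algebraic input is the rank--nullity generating function for $T_{M_G}$, and the only graph-theoretic input is the forest identity $|E(G)|+p_0(G)=|V(G)|$. Note that an alternative, inductive route via deletion and MP-contraction is available, using Proposition~\ref{prop:isodelcontr} and the deletion/contraction relation for Tutte polynomials of multipath matroids; this would require establishing a short exact sequence for $C^\ast_\mu$ under deletion/MP-contraction and carefully tracking loops and coloops, which seems heavier than the direct subset-expansion argument outlined above.
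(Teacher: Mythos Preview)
Your proposal is correct and follows essentially the same route as the paper: expand both $\chi_\mu$ and $T_{M_G}(1-\alpha,1)$ as sums over $\Mult(G)$ via the rank--nullity formula (only independents survive at $y=1$), and match exponents using the forest identity. The only cosmetic difference is that the paper first reduces to the connected case and uses $p_0(m)=|E(G)|-|E(m)|+1$ for a tree, whereas you work directly with a forest via $|E(G)|+p_0(G)=|V(G)|$; also, the formula $\chi_\mu(G,\alpha)=\sum_{m}(-1)^{|E(m)|}\alpha^{p_0(m)}$ that you flag as the ``main obstacle'' is stated explicitly in the paper immediately before the theorem, so no additional work is needed there.
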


Despite the fact that Theorem~\ref{thm:intro-chi} does not provide a complete answer to \cite[Question~8.3]{primo}, it does provide a starting point to understand the combinatorial meaning of $\chi_\mu$.

\subsection*{Conventions}
Unless otherwise specified, all (directed) graphs and matroids are \emph{finite}.
Digraphs do not have multiple edges, but multiple loops at the same vertex are allowed.

\subsection*{Acknowledgements:} The authors are grateful to INdAM-GNSAGA. LC was supported by the Starting Grant 101077154 “Definable Algebraic Topology” from the ERC. CC acknowledges the MUR-PRIN project 2022NMPLT8 and the MIUR Excellence Department Project awarded to the Department of Mathematics, University of Pisa, CUP I57G22000700001.
For the valuable suggestions and motivation, the authors are thankful to Michele D'adderio, Roberto Pagaria, Lorenzo Vecchi and Dobbiaco Tutt\textschwa.  
SDT would like to express his sincere gratitude to Roberto Pagaria for the many insightful conversations about matroids, and their applications. 
SDT also extends his thanks to Alessandra Caraceni and Michele D'adderio for patiently answering a wide range of combinatorial questions during his stays in Pisa. 

\section{Basics on Matroids}
In this section we recall the definition of (finite) matroids, some of their properties, and the definition of the Tutte polynomial.  
We refer to \cite{OxMat} and \cite{WhiteMaTh} for a complete discussion about matroids.

Let $X$ be a finite set and $\mc{I}$ a family of subsets of $X$. Given a set $A$, we denote by $|A|$ its cardinality. We say that $\mc{I}$ is \emph{a family of independent sets} for $X$ if the following are satisfied: 
\begin{enumerate}[label = {\rm (I\arabic*)}]
\item  the empty set is in $\mc{I}$;
\item  if $A\in \mc{I}$ and $B \subset A$, then $B \in \mc{I}$;
\item \label{I3}
if $A,B \in \mc{I}$ and $|A| > |B|$ then there exists $x \in A$ such that $B \cup \{x\} \in \mc{I}$.
\end{enumerate}
 The elements of $\mc{I}$ are called \emph{independent sets} or, simply, \emph{independents}.

\begin{defn}\label{defn:matroid}
A (finite) matroid $M$ is a pair $(X, \mc{I})$, where $X$ is a finite set and $\mc{I}$ is  a family of independent sets for $X$.
\end{defn}
We refer to the set $X$ as the \emph{ground set} of the matroid~$M$. A maximal independent set is called a \emph{basis} for $M$. All bases of a matroid   have the same cardinality. A subset of $X$ that is not an independent is called a \emph{dependent set}. A minimal dependent set is called a \emph{circuit}.
The set $\mc{C}(M)$ of circuits of a matroid $M$ satisfies the following important properties:
\begin{itemize}
\item for every $A,B \in \mc{C}(M)$, if  $A \subset B$, then $A=B$; 
\item if $x \in A \cap B$ and $A \neq B$, then there exists $C \in \mc{C}(M) $ such that $C \subset A \cup B \setminus \{x\}$.
\end{itemize}
Using circuits, it is possible to provide an alternative definition of matroid. In fact, we have the following;

\begin{rmk}\label{defn:matroidcircuits}  A matroid can be viewed as a pair of sets $(X, \mc{C})$, where $X$ is the ground set and $\mc{C}$ is a family of elements of $\mc{P}(X)$ such that: 
\begin{enumerate}[label={(C\arabic*)}]
\item\label{C1}  the empty set is \emph{not} in $\mc{C}$;
\item \label{C2}  if $A, B \in \mc{C}$, then $A \subset B$ implies $A=B$; 
\item\label{C3} if $A, B \in \mc{C}$, $x \in A \cap B$ and $A \neq B$, then there exists $C \in \mc{C}$ such that $C \subset A \cup B \setminus \{x\}$.
\end{enumerate}
The two definitions of matroid are equivalent; to see it, it is sufficient to define
\[ \mc{I}_{\mc{C}} = \{ S \subseteq X \mid C \nsubseteq S,\text{ for each }C\in\mc{C} \}\ ,\]
and this is easily seen to be a family of independent sets -- see also~\cite[Section~1.1]{OxMat}.
\end{rmk}

Given a matroid  $M=(X,\mc{I})$, its \emph{rank function}  $r_M\colon \mc{P}(X) \rightarrow \mathbb{N}_{\geq 0}$ is the function which associates to each $S\subseteq X$ the size of a maximal independent set contained in $S$. 

A primer example of matroids is given by sets of independent vectors.
Let $V$ be a $\mathbb{C}$-vector space and $X \subseteq V$ be a finite set. Then, $X$ has a natural structure of matroid: $I \subset X$ is independent if and only if the vectors in $I$ are linearly independent. In this context, the value of the rank function on $S\subseteq X$ is the dimension of the vector space spanned by $S$.
Let us see another example, which will be used in the follow-up.

\begin{example}[Uniform Matroids]\label{ex:uniform matroid}
Consider the set $X = \{ 1,..., n\} \subseteq \N$ and
let $P_k(X)$ be the set of subsets of $X$ with cardinality at most $k \leq |X|=n$. Then, $\mc{P}_k(X)$ satisfies the conditions of being an independent set, and the pair $(X,\mc{P}_k(X))$ is a matroid $U_{k,n}$ called the \emph{uniform matroid}. If $A $ is a subset of $ X$ then $r_{U_{k,n}}(A)=\min\{|A|, k\}$.
\end{example}

Matroids yield a category, when considering the following notion of morphisms --  usually called strict maps. 
 
A morphism between the matroids $M=(X, \mc{I})$ and $M'=(X', \mc{I}')$ is a map $f \colon X \rightarrow X'$ such that  $f^{-1}(A') \in \mc{I}$ for all $A' \in \mc{I}'$.  
Two matroids $M=(X, \mc{I})$ and $M'=(X', \mc{I}')$ are isomorphic, and in such case we write $M\cong M'$, if there exists a bijective map $f \colon X \rightarrow X'$ such that  $f^{-1}(A') \in \mc{I}$ if and only if $A' \in \mc{I}'$.
The category of matroids is closed under coproducts; 
\begin{defn}[Direct Sum Matroid]\label{defn:summat} Let $M_1=(X_1, \mc{I}_1)$ and $M_2=(X_2, \mc{I}_2)$ be  matroids. The direct sum $M_1 \oplus M_2$ is the matroid having ground set $X_1 \sqcup X_2$ and as family of independent sets $\mc{I} = \{ I_1 \sqcup I_2 \mid I_1\in \mc{I}_1 \text{ and } I_2 \in \mc{I}_2\}$.
\end{defn}

Furthermore, there are two natural operations on matroids: deletion and contraction, which we now recall.
Consider a matroid $M=(X,\mc{I})$ and $x \in X$ any element.
\begin{defn}[Deletion Matroid]\label{defn:deletionmat}  The deletion of $M=(X,\mc{I})$ with respect to $x$ is the matroid $M \setminus x=(X', \mc{I'})$ defined by the data: 
\begin{itemize}
\item[DM1:] the ground set $X'$ is the set $X \setminus \{x\}$; 
\item[DM2:] the independent set $\mc{I'}$ is the sets of $I \in \mc{I}$ such that $I \subset X'$.
\end{itemize}
\end{defn}
Assume further that $\{x\}$ is an independent for $M=(X,\mc{I})$.
\begin{defn}[Contraction Matroid]\label{defn:contractionmat}  The matroid $M / x$ obtained contracting $x$ is defined by: 
\begin{itemize}
\item[CM1:] the ground set $X$ is the set $X \setminus \{x\}$; 
\item[CM2:] the independent set $\mc{I'}$ is given by the sets $I \subset X'$ such that   $I \cup \{x\} \in \mc{I}$. 
\end{itemize}
\end{defn}
 
Let $M = (X,\mc{I})$ be a matroid with rank function $r$.
The \emph{corank} of a subset $A \subset X$ is the difference $z(A)\coloneqq r(M) -r(A)$. The difference $n(A)\coloneqq |A|-r(A)$ is called the \emph{nullity} of $A$. 
Using the corank and the nullity functions, we can now recall the definition of Tutte polynomial for a matroid. For an extended discussion about the Tutte polynomials we refer to \cite[Chapter~6]{WhiteMatApp}. 

\begin{defn}\label{def:Tutte} Let $M=(X,I)$ be a finite matroid. Its Tutte polyonomial is defined by the formula: 
\[T_M(x,y) = \sum_{A \subseteq X} (x-1)^{z(A)}(y-1)^{n(A)} \ .\]
By convention, if $X = \emptyset $ then $T_M(x,y) = 1$.
\end{defn}

The Tutte polynomial encodes many nice combinatorial properties of $M$, and it is well-behaved with respect to direct sums; let $M_1=(X_1, \mc{I}_1)$ and $M_2=(X_2, \mc{I}_2)$ be  matroids, then \begin{equation}\label{prop:DSTutte}
    T_{M_1 \oplus M_2}(x,y)=T_{M_1}(x,y) \, T_{M_2}(x, y) \ .
\end{equation}
The Tutte polynomial can be recursively computed using  deletions and contractions.
We say that  $e \in X$ is a \emph{loop} for a matroid $M=(X,I)$ if $\{e\}$ is not an independent set. Moreover, we say that  $e \in X$ is a \emph{coloop} (or an isthmus) of $M$ if $e$ is contained in every independent set.
The following result is well-known -- see, for instance~\cite[Chapter~6]{WhiteMatApp};
\begin{theorem}
\label{thm:Tutteprop}  If $e$ is a loop of $M$, then 
\[T_M(x,y)= yT_{M \setminus e} (x,y)\ .\]
If $e$ is a coloop, we have 
\[T_M(x,y)= xT_{M / e} (x,y)\ .\]
Finally, we have
\[T_M(x,y)= T_{M \setminus e} (x,y) + T_{M / e} (x,y) \ , \]
if $e$ is neither a loop nor a coloop for $M$.
\end{theorem}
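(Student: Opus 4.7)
The plan is to prove all three identities by direct computation from the defining formula
\[ T_M(x,y) = \sum_{A \subseteq X} (x-1)^{z(A)}(y-1)^{n(A)} \ . \]
Given $e \in X$, I would split the sum into the part indexed by subsets $A' \subseteq X \setminus \{e\}$ (the ``$e\notin A$'' half) and the part indexed by $A = A' \cup \{e\}$ (the ``$e\in A$'' half). In each of the three cases the work reduces to comparing the ranks $r_M(A')$ and $r_M(A' \cup \{e\})$ with the ranks of $A'$ computed in $M \setminus e$ or $M/e$, and comparing $r(M)$ with $r(M \setminus e)$ and $r(M/e)$.

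First I would handle the loop case. Since $\{e\}$ is dependent, adjoining $e$ leaves every rank unchanged, so $r_M(A') = r_M(A' \cup \{e\}) = r_{M \setminus e}(A')$ for every $A' \subseteq X \setminus \{e\}$; in particular $r(M) = r(M \setminus e)$. Hence the $e \notin A$ contribution reproduces $T_{M \setminus e}(x,y)$ term by term, whereas in the $e \in A$ contribution the corank is unchanged but the nullity gains a $+1$, contributing an additional factor $(y-1)$. Adding the two halves gives $(1+(y-1))\,T_{M \setminus e}(x,y) = y\, T_{M \setminus e}(x,y)$.

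The coloop case is dual. Since $e$ lies in every basis, $r(M) = r(M/e) + 1$; moreover for $A' \subseteq X \setminus \{e\}$ one has $r_M(A') = r_{M/e}(A')$ and $r_M(A' \cup \{e\}) = r_{M/e}(A') + 1$. The $e \in A$ terms then coincide with those of $T_{M/e}(x,y)$, while the $e \notin A$ terms acquire an extra factor $(x-1)$ in the corank; summing yields $((x-1)+1)\,T_{M/e}(x,y) = x\, T_{M/e}(x,y)$.

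Finally, when $e$ is neither a loop nor a coloop, $r(M) = r(M \setminus e) = r(M/e) + 1$, and for $A' \subseteq X \setminus \{e\}$ we have $r_M(A') = r_{M \setminus e}(A')$ (so the $e \notin A$ half equals $T_{M \setminus e}(x,y)$) and $r_M(A' \cup \{e\}) = r_{M/e}(A') + 1$ (so the $e \in A$ half equals $T_{M/e}(x,y)$), and the deletion-contraction identity follows by summing. The only delicate point across the three cases is the bookkeeping that relates ranks in $M/e$ to those in $M$: in the non-loop cases one has to use that $\{e\}$ is independent to justify $r_{M/e}(A') = r_M(A' \cup \{e\}) - 1$, which is where the hypothesis that $e$ is not a loop enters in the last two cases. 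Apart from this, the argument is entirely formal and the only obstacle is keeping the corank/nullity accounting straight.
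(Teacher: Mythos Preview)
Your argument is correct and is exactly the standard corank--nullity computation one finds in the references. Note, however, that the paper does not actually supply a proof of this theorem: it is stated as a well-known result with a citation to \cite[Chapter~6]{WhiteMatApp}. So there is no ``paper's own proof'' to compare against; your write-up simply fills in what the cited source would provide.

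One small remark on presentation: in the coloop case you invoke ``$e$ lies in every basis'' to get $r(M)=r(M/e)+1$, but that identity only needs $\{e\}$ independent (i.e.\ $e$ not a loop), via $r(M/e)=r(M)-r(\{e\})$. The genuine use of the coloop hypothesis is in the claim $r_M(A')=r_{M/e}(A')$, equivalently $r_M(A'\cup\{e\})=r_M(A')+1$ for all $A'\subseteq X\setminus\{e\}$, which is precisely the statement that $e$ lies in no circuit. Your bookkeeping is right; just be sure the justifications line up with the claims they support.
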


\subsection{Graphic Matroids} 
In this subsection we focus on an important class of matroids, called graphic matroids.
Let $G$ be an undirected finite graph with vertex set $V(G)$ and edges $E(G)$. Let $\mathcal{C}$ be the set of subgraphs of $G$ containing at least a cycle. 
By identifying each element of $\mathcal{C}$ with its set of edges, the set $\mathcal{C}$ satisfies the axioms of circuits of a matroid given in Remark~\ref{defn:matroidcircuits}, over the ground set $X=E(G)$.

\begin{defn}[Graphic Matroid] 
Let $G$ be a finite graph, and $\mc{C}$ the set of its subgraphs containing at least one cycle. The matroid  $(E(G), \mc{I}_{\mc{C}})$ is the graphic matroid associated to $G$.
\end{defn}

In other words, the independent sets of the graphic matroid associated to $G$ are (the edges of) subforests of $G$.

\begin{example}\label{es:matrees} Let $G$ be a tree with $n+1$ vertices. Since $G$ has no cycles, its graphic matroid $(E(G), P(E(G))$ can be identified with the uniform matroid $ U_{n,n}$. In particular, all trees with the same number of vertices have isomorphic graphic matroids.
\end{example}

There are classical results about graphic matroids relating the Tutte polynomial of a graphic matroid to properties of the underlying graph, as we now shall recall. 
We say that  a (vertex) $k$-colouring for an \emph{unoriented} graph $G=(V(G),E(G))$ is a map $c \colon V(G) \rightarrow \{1, \dots , k\}$ such that if $(v_1, v_2) \in E(G)$ then $c(v_1) \neq c(v_2)$.

The chromatic function of $G$ is the function $\chi_G\colon \N \rightarrow \N$ that to each non-negative integer $k$ associates the number of $k$ colourings of $G$.

For a graph $G$, a loop is an edge adjacent to a single vertex. Then, we have the following;
\begin{theorem}\label{thm:Tuttecolourings} 
Let $G$ be a graph without loops, and let $M$ be its associated graphic matroid. Then, for any $k\in \mathbb{N}$, we have 
\[k^{p_0(G)}T_{M}(1-k,0)=(-1)^{  r(G)  }\chi_G(k)\]
where $p_0(G)$ is the number of connected components of $G$.
\end{theorem}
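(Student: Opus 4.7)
My plan is to prove the identity by induction on the number of edges $|E(G)|$, running the deletion--contraction recursion for both sides simultaneously and showing they satisfy the same recursion and the same base case. On the matroid side I would use Theorem~\ref{thm:Tutteprop}, and on the chromatic side the well-known recursion $\chi_G(k) = \chi_{G\setminus e}(k) - \chi_{G/e}(k)$, valid for any non-loop edge. The fundamental compatibility needed is $M_{G\setminus e}\cong M_G\setminus e$ and $M_{G/e}\cong M_G/e$ for edges of the graphic matroid, which I would state (it is classical, and essentially immediate from the description of circuits of graphic matroids).

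For the base case $|E(G)|=0$, the graph $G$ is a disjoint union of isolated vertices, so $r(G)=0$, $p_0(G)=|V(G)|$, $T_{M_G}=1$ and $\chi_G(k)=k^{|V(G)|}$, giving equality. For the inductive step, fix any edge $e$ (which is not a loop by assumption) and split into two cases. If $e$ is not a bridge, it is neither a loop nor a coloop in $M_G$, and we have $p_0(G\setminus e)=p_0(G/e)=p_0(G)$, $r(G\setminus e)=r(G)$, $r(G/e)=r(G)-1$. Combining the additive Tutte recursion with the chromatic recursion and the inductive hypothesis applied to both $G\setminus e$ and $G/e$ gives
\begin{equation*}
k^{p_0(G)}T_{M_G}(1-k,0) = (-1)^{r(G)}\chi_{G\setminus e}(k) + (-1)^{r(G)-1}\chi_{G/e}(k) = (-1)^{r(G)}\chi_G(k).
\end{equation*}

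The bridge case requires a small extra input. If $e$ is a bridge, then $e$ is a coloop of $M_G$, so $T_{M_G}(x,y)=xT_{M_G/e}(x,y)$, while $p_0(G/e)=p_0(G)$ and $r(G/e)=r(G)-1$. To close the induction I need the identity $\chi_G(k)=(k-1)\chi_{G/e}(k)$ for a bridge, which follows by the standard argument: writing $G$ as the union of two subgraphs $G_1,G_2$ joined by $e$ at endpoints $u,w$, one has $\chi_{G_1\sqcup G_2}(k)=\chi_{G_1}(k)\chi_{G_2}(k)$, $\chi_{G/e}(k)=\chi_{G_1}(k)\chi_{G_2}(k)/k$ (identify $u=w$), and $\chi_G(k)=(k-1)\chi_{G_1}(k)\chi_{G_2}(k)/k$ (same count but $u,w$ must receive distinct colours). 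Substituting into the coloop recursion and using the inductive hypothesis for $G/e$ then yields the identity for $G$.

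The main bookkeeping obstacle is keeping the powers of $k$ and the signs $(-1)^r$ aligned through each case; the bridge case in particular requires noticing that the factor $(1-k)$ coming from the Tutte coloop rule is exactly the factor $-(k-1)$ produced by combining the change $r(G)-r(G/e)=1$ with the bridge formula for $\chi_G$. Beyond that, the argument is a direct induction, with no subtlety other than verifying that no edges are loops at any stage (which is preserved by deletion, and preserved by contraction because the paper works with simple graphs by convention).
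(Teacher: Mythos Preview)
The paper does not actually prove Theorem~\ref{thm:Tuttecolourings}; it is quoted as a classical result, with a reference to \cite{WhiteMatApp} for the proof. Your deletion--contraction induction is exactly the standard argument found in those references, and the bookkeeping you outline (matching the sign $(-1)^{r}$ and the power $k^{p_0}$ through the coloop and non-coloop cases, using $\chi_G(k)=(k-1)\chi_{G/e}(k)$ for a bridge) is correct.

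There is one small gap in your closing remark. You assert that looplessness is ``preserved by contraction because the paper works with simple graphs by convention.'' This is not true: even starting from a simple graph, contracting an edge can create parallel edges, and a subsequent contraction of one of those parallels produces a loop. So your induction may land on a graph with loops. The clean fix is to strengthen the inductive statement to allow loops, observing that both sides then vanish: if $G$ has a loop then $\chi_G(k)=0$, while on the matroid side that loop is a matroid loop, so $T_{M_G}(x,y)=y\,T_{M_G\setminus e}(x,y)$ and hence $T_{M_G}(1-k,0)=0$. With that adjustment (or, equivalently, by noting in the non-bridge case that if $G/e$ acquires a loop then both $T_{M_{G/e}}(1-k,0)$ and $\chi_{G/e}(k)$ are zero and the recursion still closes), your argument is complete.
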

We refer to \cite{OxGraphic} for an extended discussion about graphic matroids, and to \cite[Chapters 4 \& 6]{WhiteMatApp} for further details on the Tutte polynomials of graphic matroids and the proof of Theorem~\ref{thm:Tuttecolourings}. 
We conclude this section with a few words about representability of matroids.
 
A finite matroid $M=(X,\mc{I})$ is representable over a field $\mathbb{F}$ if there exists a matrix $N$ with coefficients in $\mathbb{F}$ and a bijective map $f$ from $X$ to the set of columns of $N$ such that $A \in I$ if and only if $f(I)$ is a set of linearly independent sets of $N$.

\begin{defn}[Binary Matroid]\label{def:binary} A matroid is binary if it is representable over the field with two elements $\mathbb{F}_2$.
\end{defn} 
We say that a matroid is \emph{regular} if it is representable over any field.
 Every graphic matroid regular -- see, for instance,~\cite[Proposition 5.1.2]{OxMat}.

\section{Path Poset and Multipath Cohomology}
In this section,  we review some basic notions on directed graphs, and recall the definition of path posets as developed in~\cite{turnerwagner, primo, secondo}. Following~\cite{terzo}, we recall also the notion of dynamical regions of digraphs.

\subsection{Digraphs}

By a \emph{directed graph}~$  G$, often shortened to \emph{digraph},  we mean a pair of finite sets $(V(G),E(G))$ of  \emph{vertices} and \emph{edges}.
Each edge $e\in E(G)$ has a \emph{source} $s(e)\in V(G)$ and a \emph{target}~$t(e)\in V(G)$. For any pair of \emph{distinct} vertices, say $v$ and $w$, we ask that there is at most one edge such that $v = s(e)$ and $w = t(e)$ -- which will be denoted simply by $(v,w)$. An edge such that $s(e) = t(e)$ is called \emph{loop}. For each vertex $v$, there might be more than one loop having~$v$ both as target and source.
 
If a vertex $v$ is {either} a source or a target of an edge $e$, we will say that \emph{$e$ is incident to $v$}. Furthermore, we say that $v \in V(  G)$ is a \emph{sink} (resp.~a \emph{source}) if for every $e\in E(G)$ incident to $v$ we have $v = t(e)$ (resp.~$v = s(e)$) and $e$ is not a loop. 
A digraph~$  G$ with~$n$ edges is a \emph{sink} (resp. a \emph{source}) \emph{on $n+1$ vertices} if it has a unique sink (resp.~source), and every edge in $  G$ is incident to it.  
If $G$ is a digraph, we denote by $G^{\mathrm{ud}}$ the underlying undirected multigraph. The valence of a vertex $v$ in a digraph $G$ is defined as the valence of $v$ in $G^{\mathrm{ud}}$. A \emph{morphism of digraphs} from~$  G_1$ to  $  G_2$ is a function $\phi\colon V(  G_1)\to V(  G_2)$ such that:
\[ e = (v,w) \in E(  G_1)\ \Longrightarrow\ \phi (e) \coloneqq (\phi(v),\phi(w)) \in E(  G_2)\ .\]
 A morphism of digraphs is called \emph{regular} if it is injective as a function. A \emph{subgraph}    $H$ of a digraph   $G$ is a digraph with $V(H)\subseteq V(G)$ and $E(H)\subseteq E(G)$; in such case, we  write $   H \leq   G$.

\begin{rmk}\label{rem:digons}
Coherently oriented cycles of length two in a digraph $G$, i.e.~subgraphs on two vertices $v,w$, and edges $(v,w),(w,v)$,   have no common edges. 
\end{rmk}

\begin{figure}[h]
	\centering
	\begin{subfigure}[b]{0.4\textwidth}
		\centering
		\begin{tikzpicture}[baseline=(current bounding box.center)]
		\tikzstyle{point}=[circle,thick,draw=black,fill=black,inner sep=0pt,minimum width=2pt,minimum height=2pt]
		\tikzstyle{arc}=[shorten >= 8pt,shorten <= 8pt,->, thick]
		
		\node[above] (v0) at (0,1) {$v_0$};
		\draw[fill] (0,1)  circle (.05);
		\node[above] (v1) at (1.5,1) {$v_1$};
		\draw[fill] (1.5,1)  circle (.05);
		\node[above] (v2) at (3,1) {$v_{2}$};
		\draw[fill] (3,1)  circle (.05);

		\draw[thick,   -latex] (1.35,1) -- (0.15,1);
		\draw[thick,  -latex] (1.65,1) -- (2.85,1);
	\end{tikzpicture}
\caption{A linear source. }\label{fig:source}
\end{subfigure}
\hspace{0.1\textwidth}
	\begin{subfigure}[b]{0.4\textwidth}
		\centering
		\begin{tikzpicture}[baseline=(current bounding box.center)]
		\tikzstyle{point}=[circle,thick,draw=black,fill=black,inner sep=0pt,minimum width=2pt,minimum height=2pt]
		\tikzstyle{arc}=[shorten >= 8pt,shorten <= 8pt,->, thick]
		
		\node[above] (v0) at (0,1) {$v_0$};
		\draw[fill] (0,1)  circle (.05);
		\node[above] (v1) at (1.5,1) {$v_1$};
		\draw[fill] (1.5,1)  circle (.05);
		\node[above] (v2) at (3,1) {$v_{2}$};
		\draw[fill] (3,1)  circle (.05);

		\draw[thick,  -latex] (0.15,1) -- (1.35,1);
		\draw[thick,  -latex] (2.85,1) -- (1.65,1);
	\end{tikzpicture}
\caption{A linear sink. }\label{fig:sink}
\end{subfigure}
	\caption{Linear source and sink.}\label{fig:nnstep}
\end{figure}
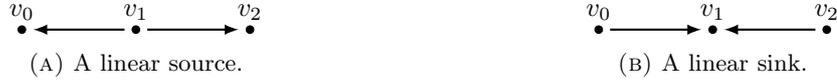

If $H\leq   G$ and $V(   H) = V(  G)$ we  say that $H$ is a \emph{spanning subgraph} of $  G$.
Given a proper spanning subgraph $   H <   G$, we can find an edge $e\in E(  G)\setminus E(   H)$. The spanning subgraph of   $G$ obtained from $H$ by adding an edge $e$  is simply denoted by  $H\cup e$. 
\subsection{Multipaths and Path posets}
In this subsection we introduce  one of the main tools in the definition of multipath cohomology of directed graphs: the \emph{path poset}~\cite{primo}.

By a \emph{simple path} in a digraph $G$ we mean a sequence of edges $e_1,\dots,e_n$ of $G$, which are not loops, such that~$s(e_{i+1})=t(e_i)$ for $i=1,\dots,n-1$, no vertex is encountered twice, i.e. if~$s(e_i) = s(e_j)$ or $t(e_i) = t(e_j)$, then $i=j$, and it does not form a cycle, i.e.~$s(e_1)\neq t(e_n)$.
A connected component of $G$ is a subgraph $H$ of   $G$ whose geometric realisation (as CW-complex) is a connected component of the geometric realisation of $G$.
We are interested in disjoint unions of simple paths; following~\cite{turnerwagner}, we call them multipaths:

\begin{defn}\label{def:multipaths}
A \emph{multipath} of a digraph~$  G$ is a spanning subgraph such that each connected component is either a vertex or a simple path. The \emph{length} of a multipath~$H$, denoted by $\ell(  H)$, is the number of its edges. The set of multipaths of $G$ will be denoted by $\Mult(G)$, and the set of multipaths of length~$i$  by $\Mult_i(G)$.
\end{defn}
 
The set of multipaths of $  G$ has a natural partially ordered structure:

\begin{defn}\label{def:pathposet}
	The \emph{path poset} of $  G$ is the poset $(P(  G),<)$ associated to $  G$, that is, the set of multipaths of   G ordered by the relation of ``being a subgraph''. 
\end{defn}

With abuse of notation, we will also  write $P(  G)$ instead of  $(P(  G),<)$.
\begin{example}\label{ex:Pn}
Consider the coherently oriented linear graph $I_n$ with $n$ edges depicted in Figure~\ref{fig:nstep}. 
Then, $(P(I_n), < )$ is isomorphic to a Boolean poset.
Let $C_n$ be the coherently oriented polygonal graph with $n+1$ edges -- cf.~Figure~\ref{fig:poly}. Then, $(P(C_n) ,<)$, is isomorphic to a Boolean poset minus its maximum.
\begin{figure}[h]
	\begin{tikzpicture}[baseline=(current bounding box.center)]
		\tikzstyle{point}=[circle,thick,draw=black,fill=black,inner sep=0pt,minimum width=2pt,minimum height=2pt]
		\tikzstyle{arc}=[shorten >= 8pt,shorten <= 8pt,->, thick]
		
		\node[above] (v0) at (0,0) {$v_0$};
		\draw[fill] (0,0)  circle (.05);
		\node[above] (v1) at (1.5,0) {$v_1$};
		\draw[fill] (1.5,0)  circle (.05);
		\node[] at (3,0) {\dots};
		\node[above] (v4) at (4.5,0) {$v_{n-1}$};
		\draw[fill] (4.5,0)  circle (.05);
		\node[above] (v5) at (6,0) {$v_{n}$};
		\draw[fill] (6,0)  circle (.05);
		
		\draw[thick,  -latex] (0.15,0) -- (1.35,0);
		\draw[thick,  -latex] (1.65,0) -- (2.5,0);
		\draw[thick,  -latex] (3.4,0) -- (4.35,0);
		\draw[thick,  -latex] (4.65,0) -- (5.85,0);
	\end{tikzpicture}
	\caption{The coherently oriented linear graph $I_n$.}
	\label{fig:nstep}
\end{figure}
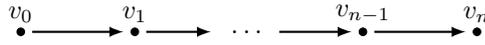

\begin{figure}[h]
\newdimen\R
\R=2.0cm
\begin{tikzpicture}
\draw[xshift=5.0\R, fill] (270:\R) circle(.05)  node[below] {$v_0$};
\draw[xshift=5.0\R,fill] (225:\R) circle(.05)   node[below left]   {$v_1$};
\draw[xshift=5.0\R,fill] (180:\R) circle(.05)   node[left] {$v_2$};
\draw[xshift=5.0\R,fill] (135:\R) circle(.05)   node[above left] {$v_3$};
\draw[xshift=5.0\R, fill] (90:\R) circle(.05)   node[above] {$v_4$};
\draw[xshift=5.0\R,fill] (45:\R) circle(.05)   node[above right] {$v_5$};
\draw[xshift=5.0\R,fill] (0:\R) circle(.05)  node[right] {$v_6$};
\draw[xshift=5.0\R,fill] (315:\R) circle(.05)   node[below right] {$v_n$};

\node[xshift=5.0\R] (v0) at (270:\R) { };
\node[xshift=5.0\R] (v1) at (225:\R) { };
\node[xshift=5.0\R] (v2) at (180:\R) { };
\node[xshift=5.0\R] (v3) at (135:\R) { };
\node[xshift=5.0\R] (v4) at (90:\R) { };
\node[xshift=5.0\R] (v5) at (45:\R) { };
\node[xshift=5.0\R] (v6) at (0:\R) { };
\node[xshift=5.0\R] (vn) at (315:\R) { };

\draw[thick,  -latex] (v0)--(v1);
\draw[thick,  -latex] (v1)--(v2);
\draw[thick,  -latex] (v2)--(v3);
\draw[thick,  -latex] (v3)--(v4);
\draw[thick,  -latex] (v4)--(v5);
\draw[thick,  -latex] (v5)--(v6);
\draw[thick,  -latex] (vn)--(v0);

\draw[xshift=5.0\R, fill] (292.5:\R);
\draw[xshift=5.0\R,fill] (247.5:\R);
\draw[xshift=5.0\R,fill] (157.5:\R);
\draw[xshift=5.0\R, fill] (112.5:\R);
\draw[xshift=5.0\R,fill] (67.5:\R);
\draw[xshift=5.0\R,fill] (22.5:\R);
\draw[xshift=4.95\R,fill] (337.5:\R)   node {$\cdot$} ;
\draw[xshift=4.95\R,fill] (333:\R)  node {$\cdot$} ;
\draw[xshift=4.95\R,fill] (342:\R)   node {$\cdot$} ;
\end{tikzpicture}
\caption{The coherently oriented cycle graph $C_n$.}
\label{fig:poly}
\end{figure}
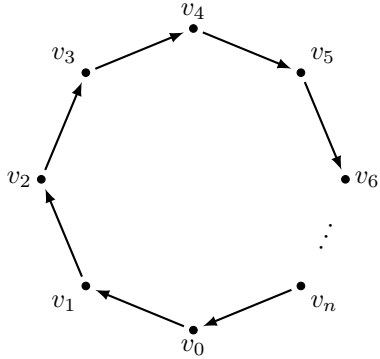
\end{example}
\subsection{Dynamical regions.}\label{sec:dynreg}
Let $G$ be a digraph, and let $G'\leq G$ be a subgraph. We will use the following terminology. The \emph{complement} $C_{G}(G')$ of  $G'$ in $G$ is the subgraph of $G$ spanned by the edges in $E(G)\setminus E(G')$. The \emph{boundary} $\partial_{G} G'$ of $G'$ in $G$, or simply $\partial G'$ when clear from the context, is defined as $\partial_{G} G'=V(G')\cap V(C_{G}(G))$. If $G$ ha at least an edge,  a vertex $v\in V(G)$ is called \emph{stable} if either the indegree or the outdegree of $v$ is zero, and \emph{unstable} otherwise.

\begin{defn}[{\cite[Definition~4.2]{terzo}}]\label{def:dynamicalregion}
Let $G$ be a digraph.
A \emph{dynamical region} in $G$ is a connected subgraph $R \leq G$, with at least one edge, such that: 
\begin{enumerate}[label = (\alph*)]
    \item\label{item:dandelion} all vertices in the boundary of $R$ are unstable in $G$, but stable in both $R$ and $C_{G}(R)$;
    \item\label{item:cicle} no edge of $R$ belongs to any oriented cycle in $G$ which is not contained in $R$.
\end{enumerate}
{A dynamical region is called \emph{stable} if all its non-boundary vertices are stable. Similarly, A dynamical region is called \emph{unstable} if all its non-boundary vertices are~unstable, and at least one vertex is unstable.}
\end{defn}

Observe that the non-empty intersection of two dynamical regions, say $R $ and $S$, still satisfies Items~\ref{item:dandelion} and \ref{item:cicle}. In particular, each connected component of $R \cap S$ is still a dynamical region.

\begin{defn}[{\cite[Definition~4.4]{terzo}}]
A \emph{dynamical module}, shortly a \emph{module}, of a digraph $G$ is a minimal dynamical region.
\end{defn}

Note that for each edge $e\in E(G)$ there exists a unique dynamical module of $G$ containing~$e$ -- cf.~\cite[Lemma~4.10]{terzo}. As a consequence, each directed graph has a unique (up to re-ordering) decomposition into dynamical modules -- cf.~\cite[Theorem~4.11]{terzo}.

\section{Multipath Matroids}   

We provide a complete classification of
digraphs $G$ for which the pair $(E(G), \Mult(G))$  defines a matroid -- where $\Mult(G)$ is the set of all multipaths in $G$. Then, we investigate some combinatorial properties of such matroids.
We start with some examples.

\begin{example}\label{ex:linear}
Let $I_n$ be the coherently oriented linear graph on $n+1$ vertices -- cf.~Figure~\ref{fig:nstep}. Then, every subset of $E(I_n)$ is a multipath, and   $(E(I_n), \Mult(I_n))$ is a matroid isomorphic to the matroid~$(E(G), \mc{P}(E(G)))$.
\end{example}
\begin{example}\label{ex:cycle}
Let $C_n$ be the coherently oriented cycle over $n$ vertices, cf.~Figure~\ref{fig:poly}. Then, the pair $(E(C_n), \Mult(C_n))$ is a matroid.
In fact, after labeling the edges of $C_n$ with the integers $\{1, \dots, n\}$, we can identify the ground set $E(C_n)$ with the set $X=\{1, \dots, n\}$ -- by associating to each edge the label given by its source vertex. Under this identification, the path poset $P(C_n)$ is isomorphic to~$\mc{P}(X) \setminus \{1, \dots, n\}$ and $(E(C_n), \Mult(C_n))$ is a matroid isomorphic to the uniform matroid~$U_{n-1, n}$.
\end{example}

Recall that a digraph $  G$ with $n$ edges is called a sink on $n+1$ vertices if it has a unique sink~$v$, no loops, and every edge of $G$ has $v$ as target.

\begin{example}\label{ex:sink}
Let $S_n$ be a sink 
on $n+1$ vertices. Then $(E(S_n), \Mult(S_n))$ is a matroid. 
As in Example~\ref{ex:cycle}, the ground set $E(S_n)$ can be identified with $X=\{1, \dots, n\}$, and, under this identification, an element $ m \in  \Mult(S_n) $ corresponds to a singleton. As a consequence, $(E(S_n), \Mult(S_n))$ is isomorphic to the uniform matroid $U_{1,n}$.
\end{example} 

\begin{example}\label{ex:loop} Let $G$ be the digraph with one vertex $v$ and $n$ loops at $v$. We have that $E(G)=n$ and $\Mult(G)=\emptyset$, then $(E(G), \Mult(G))$ is isomorphic to the uniform matroid $U_{0,n}$. Observe also that $U_{0,n}$ is isomorphic to the direct sum of $n$ copies of $U_{0,1}$.
\end{example}

So far, we showed some easy examples of pairs $(E(G),\Mult(G))$ yielding matroids.
Nevertheless, there are examples of digraphs $G$ for which $\Mult(G)$ is not the independent set of any matroid with ground set $E(G)$. Recall that a linear graph $A_n$ on $n+1$ vertices  is called \emph{alternating} if
whenever $(v_{i-1},v_i)\in E(A_n)$ for some~$i<n$, we have~$(v_{i+1},v_i)\in E(A_n)$ and, analogously, if $(v_{i},v_{i-1})\in E(A_n)$ then $(v_{i},v_{i+1})\in E(A_n)$.

\begin{example}\label{ex:NonMatroid} Consider the alternating graph $A_3$; then $(E(A_3), \Mult(A_3))$ is not a matroid. In fact, it is immediate to check (see Figure~\ref{fig:A3}) that in its path poset  there exist maximal elements with different cardinality; hence,  $\Mult(A_3)$ cannot be the set of independents of a matroid. 
Analogously, this happens for all alternating linear graphs on an even number of vertices.
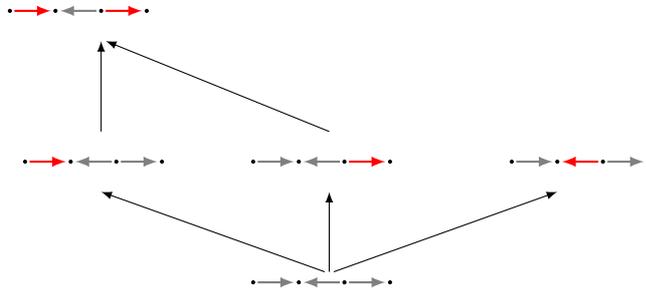
\begin{figure}[h]
	\begin{tikzpicture}[scale=0.4][baseline=(current bounding box.center)]
	\tikzstyle{point}=[circle,thick,draw=black,fill=black,inner sep=0pt,minimum width=2pt,minimum height=2pt]
	\tikzstyle{arc}=[shorten >= 8pt,shorten <= 8pt,->, thick]


	    \begin{scope}[shift={(-2.5,0)}]
			\node[above] (v0) at (0,0) {};
			\draw[fill] (0,0)  circle (.05);
			\node[above] (v1) at (1.5,0) {};
			\draw[fill] (1.5,0)  circle (.05);
			\node[above] (v2) at (3,0) {};
			\draw[fill] (3,0)  circle (.05);
			\node[above] (v3) at (4.5,0) {};
			\draw[fill] (4.5,0)  circle (.05);
			    \draw[thick, gray, -latex] (3.15,0) -- (4.35,0);
        	\draw[thick, gray, -latex] (0.15,0) -- (1.35,0);
         			\draw[thick, gray, -latex] (2.85,0) -- (1.65,0);
			
		\end{scope}

  	    \begin{scope}[shift={(-2.5,4)}]
			\node[above] (v0) at (0,0) {};
			\draw[fill] (0,0)  circle (.05);
			\node[above] (v1) at (1.5,0) {};
			\draw[fill] (1.5,0)  circle (.05);
			\node[above] (v2) at (3,0) {};
			\draw[fill] (3,0)  circle (.05);
			\node[above] (v3) at (4.5,0) {};
			\draw[fill] (4.5,0)  circle (.05);
			
			\draw[thick, red, -latex] (3.15,0) -- (4.35,0);
    	\draw[thick, gray, -latex] (0.15,0) -- (1.35,0);
     			\draw[thick, gray, -latex] (2.85,0) -- (1.65,0);
   
		\end{scope}

  	    \begin{scope}[shift={(6,4)}]
			\node[above] (v0) at (0,0) {};
			\draw[fill] (0,0)  circle (.05);
			\node[above] (v1) at (1.5,0) {};
			\draw[fill] (1.5,0)  circle (.05);
			\node[above] (v2) at (3,0) {};
			\draw[fill] (3,0)  circle (.05);
			\node[above] (v3) at (4.5,0) {};
			\draw[fill] (4.5,0)  circle (.05);
			
			\draw[thick, red, -latex] (2.85,0) -- (1.65,0);
       \draw[thick, gray, -latex] (3.15,0) -- (4.35,0);
        	\draw[thick, gray, -latex] (0.15,0) -- (1.35,0);
		\end{scope}
	
	    \begin{scope}[shift={(-10,4)}]
			\node[above] (v0) at (0,0) {};
			\draw[fill] (0,0)  circle (.05);
			\node[above] (v1) at (1.5,0) {};
			\draw[fill] (1.5,0)  circle (.05);
			\node[above] (v2) at (3,0) {};
			\draw[fill] (3,0)  circle (.05);
			\node[above] (v3) at (4.5,0) {};
			\draw[fill] (4.5,0)  circle (.05);
			
			\draw[thick, red, -latex] (0.15,0) -- (1.35,0);
      \draw[thick, gray, -latex] (3.15,0) -- (4.35,0);
      	\draw[thick, gray, -latex] (2.85,0) -- (1.65,0);
		\end{scope}

	    \begin{scope}[shift={(-10.5,9)}]
			\node[above] (v0) at (0,0) {};
			\draw[fill] (0,0)  circle (.05);
			\node[above] (v1) at (1.5,0) {};
			\draw[fill] (1.5,0)  circle (.05);
			\node[above] (v2) at (3,0) {};
			\draw[fill] (3,0)  circle (.05);
			\node[above] (v3) at (4.5,0) {};
			\draw[fill] (4.5,0)  circle (.05);
			
   \draw[thick, red, -latex] (3.15,0) -- (4.35,0);
   	\draw[thick, red, -latex] (0.15,0) -- (1.35,0);
    	\draw[thick, gray, -latex] (2.85,0) -- (1.65,0);
		\end{scope}

			\draw[ -latex,] (-0.15,0.35) -- (-7.5,3);
			\draw[ -latex] (0,0.35) -- (0,3);
   		\draw[ -latex,] (0.15,0.35) -- (7.5,3);
			\draw[ -latex] (-7.5,5) -- (-7.5,8);
			\draw[ -latex] (0,5) -- (-7.35,8);

\end{tikzpicture}
\caption{The path poset of the alternating graph $A_3$. The multipaths of $A_3$ are depicted in red.}\label{fig:A3} 
\end{figure}
\end{example}

We will use the following definition.

\begin{defn}\label{defn:multipathmatroid}
We say that a matroid $M$ is a \emph{multipath matroid} if there is a digraph  $G$ such that $M=(E(G), \Mult(G))$, in which case we denote it by $M_G$.
\end{defn}

Unlike  what happens with graphic matroids, trees with the same number of vertices do not necessary have isomorphic multipath matroids; in fact,
for $n > 1$, consider the sink $S_{n}$ (on $n+1$ vertices) and the  linear graph $I_{n}$ (on $n+1$ vertices, oriented as in Figure~\ref{fig:nstep}). 
Then, their multipath matroids are not isomorphic in view of Example~\ref{ex:linear} and Example~\ref{ex:sink}. 
\begin{rmk}\label{rmk:loopsareloops} Let $M_G$ be a multipath matroid. An edge $e=(v,w) \in E(G)$ is a loop for $M_G$ if and only if $v=w$, i.e.~if and only if $e$ is a loop in $G$. 
\end{rmk}

\subsection{Classification of Multipath Matroids}
As observed in Example~\ref{ex:NonMatroid}, the set of multipaths of a given digraph does not always satisfies the axioms of a family of independent sets. In this section we completely classify  digraphs with this property. A first step is to identify a possible set of circuits in $M_G$, i.e.~to identify the minimal spanning subgraphs of $G$ that are not multipaths.

\begin{proposition}\label{prop:minimal} Let $G$ be a digraph. Consider a (possibly disconnected) spanning subgraph $G'$ such that $G' \notin \Mult(G)$.
Then, $G'$ is  minimal by inclusion if, and only if, it is (the disjoint union of some vertices with) either (1) a linear sink or source, or (2) a coherently oriented cycle (possibly a loop).
\end{proposition}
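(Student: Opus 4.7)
The plan is to prove both implications.

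For the \emph{easy direction} ($\Leftarrow$), I would directly check that each of the four configurations—a loop, a coherently oriented cycle, a linear source, a linear sink—is a non-multipath (respectively: multipaths have no loops; a cycle is not a simple path; the center vertex of a linear sink/source has in-degree or out-degree $2$), and that removing any edge produces a multipath (one gets a collection of isolated vertices, a coherent simple path, or a single edge, respectively).

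For the \emph{hard direction} ($\Rightarrow$), let $G'$ be a minimal non-multipath spanning subgraph. First I would reduce to a single ``bad'' component: since $G'\notin\Mult(G)$, some connected component $C$ of $G'$ is neither an isolated vertex nor a simple directed path. If any other component of $G'$ contained an edge $e$, then $G'\setminus e$ would still contain $C$, hence still fail to be a multipath, contradicting minimality. So apart from $C$, every component of $G'$ is an isolated vertex, and the task reduces to classifying $C$.

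Next I would do a case analysis on $C$. If $C$ contains a loop, then the spanning subgraph consisting of that single loop edge is already a non-multipath, so by minimality $C$ is exactly that loop. If $C$ is loop-free but contains a coherently oriented cycle $Z$, the spanning subgraph with just the edges of $Z$ is already a non-multipath, so by minimality $C=Z$. The interesting case is when $C$ is loop-free and contains no coherently oriented cycle: I would show $C$ must already be a linear sink or source, hence (by minimality) equal to one.

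The key sub-lemma is: \emph{a connected digraph with at least one edge, containing no loop, no coherently oriented cycle, no linear source, and no linear sink, is a simple directed path}. I would argue as follows. Under these hypotheses, no vertex of $C$ has out-degree $\geq 2$ (two outgoing edges would, together with the distinct targets guaranteed by the no-multi-edges convention, form a linear source), and similarly no vertex has in-degree $\geq 2$. Hence every vertex has underlying valence at most $2$, so the underlying graph is either a simple path or a cycle. In the cycle case, degree-counting forces every vertex to have in-degree and out-degree exactly $1$, giving a coherently oriented cycle—contradiction. In the path case, the same constraint forces each internal vertex to have one incoming and one outgoing edge, making $C$ a simple directed path, which contradicts the badness of $C$. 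This sub-lemma is the one real piece of combinatorial work; the main obstacle is just being careful about the convention that $G$ has no multi-edges (but allows multiple loops), which is precisely what lets in/out-degree~$\geq 2$ produce a \emph{linear} source/sink on three distinct vertices. Assembling the cases yields the claim.
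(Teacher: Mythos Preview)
Your proof is correct and follows essentially the same approach as the paper's. Both arguments hinge on the observation that if a connected digraph contains no loop, no linear source, and no linear sink, then every vertex has underlying valence at most~$2$, forcing the underlying graph to be a path or a cycle; the paper just orders the case split differently (handling source/sink/loop first, then deducing a coherently oriented cycle) and leaves more of the details implicit.
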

\proof 

First, it is clear from Definition~\ref{def:multipaths} that if $G'$ 
satisfies~(1) or (2), then $G'$ is a minimal spanning subgraph of $G$ such that 
$G' \notin \Mult(G)$.

Assume that  $G'$ is minimal.
Recall that a connected graph with every vertex of valence  smaller than or equal to $2$ is either a cycle or a linear graph. 

If $G'$ contains a linear source or sink, or a loop, the thesis follows by minimality. Otherwise, every vertex of $G'$ has valence at most $2$. Since $G'$ is not a multipath, it must contain an coherently oriented cycle (possibly of length $2$). Then, by minimality this must be the only connected component which is not a vertex. 
\endproof

The following corollary is a direct consequence of Proposition~\ref{prop:minimal};

\begin{corollary}\label{cor:circ}
Let $G$ be a digraph and suppose that $M_G=(E(G), \Mult(G))$ is a matroid. Then,  the set of circuits of $M_G$  can be identified with all subgraphs that are linear sinks, linear sources, or coherently oriented cycles (including loops). 
\end{corollary}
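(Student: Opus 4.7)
The plan is to derive the corollary directly from Proposition~\ref{prop:minimal} by translating between its language of minimal spanning subgraphs and the matroid-theoretic language of circuits. Recall that a circuit of a matroid $M_G = (E(G), \Mult(G))$ is, by definition, a minimal (with respect to inclusion) dependent subset of the ground set $E(G)$; that is, a minimal $A \subseteq E(G)$ which is not the edge set of any multipath of $G$.

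The bridge between the two formulations is the following observation: there is a canonical bijection between subsets $A \subseteq E(G)$ and spanning subgraphs $G_A \leq G$, sending $A$ to the spanning subgraph of $G$ with vertex set $V(G)$ and edge set $A$. Under this bijection, $A$ is independent if and only if $G_A$ is a multipath (by the very definition of $\Mult(G)$, whose elements are spanning subgraphs), and inclusion of edge sets corresponds to inclusion of spanning subgraphs. Consequently, minimal dependent subsets of $E(G)$ correspond exactly to minimal spanning subgraphs $G' \leq G$ with $G' \notin \Mult(G)$.

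I would then invoke Proposition~\ref{prop:minimal}, which describes these minimal spanning subgraphs as disjoint unions of isolated vertices together with either (1) a linear sink or source, or (2) a coherently oriented cycle (including the case of a single loop). Since isolated vertices contribute no edges, passing back to the edge set strips these away, so each circuit of $M_G$ is identified with the edge set of a subgraph which is a linear sink, a linear source, or a coherently oriented cycle. Conversely, each such subgraph is, again by Proposition~\ref{prop:minimal}, a minimal non-multipath and hence its edge set is a circuit.

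There is no real obstacle here: the only point requiring some care is the bookkeeping around the word ``minimal'', which is used in Proposition~\ref{prop:minimal} in the category of spanning subgraphs (where inclusion is determined purely by edges, since all vertices are shared) and in the definition of a matroid circuit in the category of subsets of the ground set. The discussion above shows these two minimality notions agree under the identification $A \leftrightarrow G_A$, which is what makes the corollary essentially immediate.
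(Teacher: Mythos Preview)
Your proposal is correct and follows exactly the approach the paper intends: the paper presents this corollary as a direct consequence of Proposition~\ref{prop:minimal} without further argument, and your write-up simply spells out the straightforward translation between minimal non-multipath spanning subgraphs and circuits of $M_G$ that makes the deduction immediate.
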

\begin{rmk}\label{rmk:circ}
As a consequence of Corollary~\ref{cor:circ}, if $G'$ is a subgraph of $G$ and $(E(G), \Mult(G))$ is a multipath matroid, then $(E(G'), \Mult(G'))$ is again a multipath matroid. This can be seen by restriction: the set of linear sinks, linear sources and coherently oriented cycles of $G'$ is the subset of the circuits of $M_G$ whose edges belong to $G'$. Thus, conditions (C1)--(C3) are satisfied. In particular, if $G'$ is a subgraph of $G$ and $(E(G'), \Mult(G'))$ is not a multipath matroid, then $(E(G), \Mult(G))$ is not a multipath matroid. 
\end{rmk}
By definition, multipaths are not affected by  the existence of loops in a digraph. However, multipath matroids are sensitive to them. In fact, we have the following:

\begin{rmk}\label{rmk:looped} Let  $G$ be a graph with $k$ loops, and denote by $\widetilde{G}$ the graph obtained from $G$ by removing all loops. Then, we have that $\Mult(G)=\Mult(\widetilde{G})$ and in particular $M_G$ is a matroid if and only if $M_{\widetilde{G}}$ is a matroid. Observe that $M_G$ and $M_{\widetilde{G}}$ are not isomorphic -- since their ground sets are distinct. More precisely, $M_G \cong M_{\widetilde{G}} \oplus U_{0,k} \cong M_{\widetilde{G}} \oplus U_{0,1}^{\oplus k}.$
\end{rmk}

We are ready to state the main result of the section.

\begin{theorem}\label{thm:boom}Let $G$ be a digraph. Then, the pair $(E(G), \Mult(G))$ is a matroid if and only if the following two condition are satisfied:
\begin{enumerate}[label={(MP\arabic*)}]
\item \label{MP1} $G$ does not contain (up to edge reversing) one of the digraphs in Figure~\ref{fig:avoiding} as subgraphs;
\item\label{MP2} every coherently oriented cycle 
(loops excluded)
in $G$ is a connected component.
\end{enumerate}
\end{theorem}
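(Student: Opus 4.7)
The strategy is to use the circuit axiomatisation of matroids from Remark~\ref{defn:matroidcircuits}, together with Proposition~\ref{prop:minimal} and Corollary~\ref{cor:circ}. Let $\mathcal{C}(G)$ be the collection of (spanning subgraphs of $G$ corresponding to) coherently oriented cycles (loops included), linear sinks, and linear sources, identified with their edge sets. By Proposition~\ref{prop:minimal}, a subset of $E(G)$ is a multipath if and only if it contains no element of $\mathcal{C}(G)$, i.e.\ $\Mult(G)=\mathcal{I}_{\mathcal{C}(G)}$. Thus $(E(G),\Mult(G))$ is a matroid precisely when $\mathcal{C}(G)$ satisfies axioms (C1)--(C3); since (C1) and (C2) are immediate from the definitions, the whole statement reduces to an analysis of (C3).

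For the $(\Rightarrow)$ direction I would argue by contrapositive, combined with Remark~\ref{rmk:circ}. If \ref{MP2} fails, there is a coherently oriented cycle $C$ of length $\geq 2$ and an edge $e\notin E(C)$ incident to some $v\in V(C)$; after possibly reversing all orientations, $e$ is outgoing from $v$. Letting $f_{\mathrm{out}}\in E(C)$ denote the unique edge of $C$ leaving $v$, the two distinct circuits $C$ and $\{e,f_{\mathrm{out}}\}$ share $f_{\mathrm{out}}$, yet $(C\cup\{e\})\setminus\{f_{\mathrm{out}}\}$ is a single simple directed path containing no element of $\mathcal{C}(G)$, violating (C3). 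If \ref{MP1} fails, I would examine each digraph in Figure~\ref{fig:avoiding}---which I expect to be the alternating graph $A_3$ of Example~\ref{ex:NonMatroid} and the ``transitive triangle'' with edges $(v_0,v_1),(v_0,v_2),(v_2,v_1)$, together with their orientation reversals---and exhibit in each a linear sink and a linear source sharing a single edge whose union minus that edge is a multipath, again breaking (C3).

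For the $(\Leftarrow)$ direction I would verify (C3) by a case analysis on two distinct circuits $A,B\in\mathcal{C}(G)$ sharing an edge $e$. If one of them is a coherently oriented cycle of length $\geq 2$, then by \ref{MP2} it is its own connected component and shares no edge with any other subgraph; this case is vacuous. If one of them is a loop, it cannot share an edge with anything else in $\mathcal{C}(G)$ since linear sinks, linear sources and cycles of length $\geq 2$ contain no loops. Hence both $A$ and $B$ are linear sinks or sources; writing $e=(v_0,v_1)$, each is either a sink at $v_1$ or a source at $v_0$. Two sinks at $v_1$ (respectively two sources at $v_0$) leave a linear sink (respectively source) in $(A\cup B)\setminus\{e\}$, so (C3) holds. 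The only obstructing subcase is ``sink at $v_1$ meets source at $v_0$''; the three edges involved form either an $A_3$ subgraph (when the non-shared endpoints differ) or a transitive triangle (when they coincide), both forbidden by \ref{MP1}.

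The main obstacle I anticipate is the careful bookkeeping of degenerate sub-subcases in the sink-meets-source analysis: endpoint identifications can produce loops, digons, or short cycles that must each be matched precisely to an entry of Figure~\ref{fig:avoiding} (or, for digons, excluded by \ref{MP2}), and the ``up to reversing the orientation of all edges'' clause of \ref{MP1} must be invoked exactly to absorb the symmetry between sink-heavy and source-heavy configurations. Once Figure~\ref{fig:avoiding} is explicitly catalogued, the rest of the argument is a direct verification using Proposition~\ref{prop:minimal} and the circuit axioms.
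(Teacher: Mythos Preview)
Your approach via the circuit axiomatisation is the same as the paper's, and your treatment of the $(\Leftarrow)$ direction and of the \ref{MP1} half of $(\Rightarrow)$ is correct. (The paper phrases \ref{MP1} via the hereditary property of Remark~\ref{rmk:circ} rather than by exhibiting the (C3) failure directly, but this is equivalent; your identification of the two digraphs in Figure~\ref{fig:avoiding} is also correct.)

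There is, however, a genuine gap in your argument for $(\Rightarrow)$ when \ref{MP2} fails. You claim that $(C\cup\{e\})\setminus\{f_{\mathrm{out}}\}$ is a single simple directed path, but this is false when the other endpoint $w$ of $e$ also lies on $C$. In that case $w$ has two incoming edges in $(C\cup\{e\})\setminus\{f_{\mathrm{out}}\}$---one from $e$ and one from $C$---so this set contains a linear sink and (C3) does \emph{not} fail for this particular pair of circuits. The fix is short: choose $C$ to be a \emph{minimal} coherently oriented cycle that is not a connected component. A chord $e=(v,w)$ with $w\in V(C)$ would, together with the arc of $C$ from $w$ back to $v$, produce a strictly shorter coherently oriented cycle $C'$ (strictly, because $e\notin E(C)$ forces the arc of $C$ from $v$ to $w$ to have length $\geq 2$); moreover $C'$ is again not a connected component, since $f_{\mathrm{out}}\notin E(C')$ is incident to $v\in V(C')$. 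This contradicts minimality, so for minimal $C$ the edge $e$ must leave $V(C)$ and your path argument goes through. The paper handles this direction by a different route, namely Lemma~\ref{lem:C2} together with a separate case analysis of intersections between two coherently oriented cycles (Figure~\ref{fig:Cyrcdigon}).
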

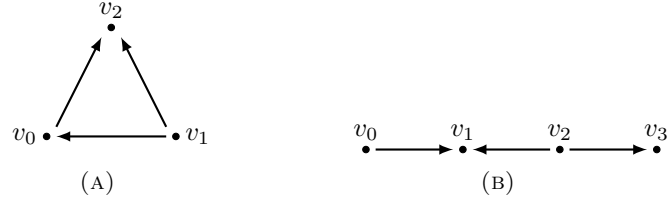
\begin{figure}[h]
	\centering
	\begin{subfigure}[b]{0.3\textwidth}
		\centering
	\begin{tikzpicture}[baseline=(current bounding box.center),scale =.85]
		\tikzstyle{point}=[circle,thick,draw=black,fill=black,inner sep=0pt,minimum width=2pt,minimum height=2pt]
		\tikzstyle{arc}=[shorten >= 8pt,shorten <= 8pt,->, thick]
		
		\node[left] (v0) at (0,0) {$v_0$};
		\draw[fill] (0,0)  circle (.05);
		\node[right] (v1) at (2,0) {$v_1$};
		\draw[fill] (2,0)  circle (.05);
		\node[above] (v2) at (1,1.7) {$v_{2}$};
		\draw[fill] (1,1.7)  circle (.05);

		\draw[thick,  -latex]  (1.85,0) -- (0.15,0);
		\draw[thick,   -latex]  (0.15,0.15) to  (0.85,1.55) ;
    	\draw[thick,   -latex] (1.85,0.15) -- (1.15,1.55);
	\end{tikzpicture}
		\caption{\phantom{A}}\label{subfig:avoidingA}
	\end{subfigure}
	\begin{subfigure}[b]{0.4\textwidth}
	\centering
	\begin{tikzpicture}[baseline=(current bounding box.center),scale =.85]
		\tikzstyle{point}=[circle,thick,draw=black,fill=black,inner sep=0pt,minimum width=2pt,minimum height=2pt]
		\tikzstyle{arc}=[shorten >= 8pt,shorten <= 8pt,->, thick]
		
		\node[above] (v0) at (0,0) {$v_0$};
		\draw[fill] (0,0)  circle (.05);
		\node[above] (v) at (1.5,0) {$v_1$};
		\draw[fill] (1.5,0)  circle (.05);
		\node[above] (v1) at (3,0) {$v_{2}$};
		\draw[fill] (3,0)  circle (.05);
		 \node[above] (v3) at (4.5,0) {$v_{3}$};
		 \draw[fill] (4.5,0)  circle (.05);
		
		\draw[thick,   -latex] (0.15,0) -- (1.35,0);
		\draw[thick,   -latex]  (2.85,0) to (1.65,0) ;
  	\draw[thick,   -latex] (3.15,0) -- (4.35,0);
	\end{tikzpicture}
				\caption{\phantom{B}}\label{subfig:avoidingB}
\end{subfigure}
	\caption{Digraphs to be avoided to get a multipath matroid.}
	\label{fig:avoiding}
\end{figure}
To prove Theorem~\ref{thm:boom} we need a preliminary result. 
\begin{lemma}\label{lem:C2}
Let $G$ be a digraph without loops.
Let $A, B \notin \Mult(G)$ be minimal spanning subgraphs of $G$ such that $E(A) \cap E(B) \neq \emptyset $. Suppose that $A$ contains a linear sink or source, that  $B$ contains a coherently oriented cycle, and that the cycle in $B$ is the unique coherently oriented cycle in $A \cup B$. Then there exists $x \in E(A) \cap E(B)$ such that $A \cup B \setminus \{x\} $ is a multipath. 
\end{lemma}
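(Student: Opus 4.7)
The plan is to use Proposition~\ref{prop:minimal} to pin down the structure of $A$ and $B$, then argue by a short case analysis on $E(A)\cap E(B)$. Since $G$ is loopless and $A,B$ are minimal spanning non-multipaths, Proposition~\ref{prop:minimal} together with the hypotheses forces $A$ to be (a disjoint union of isolated vertices with) a linear sink or source, and $B$ to be (a disjoint union of isolated vertices with) a coherently oriented cycle. I treat the linear sink case, writing $E(A) = \{e_1, e_2\}$ with $e_1=(v_0,v_1)$ and $e_2=(v_2,v_1)$; the linear source case is entirely symmetric.

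The first observation is that $\{e_1,e_2\} \not\subseteq E(B)$: if both edges of $A$ lay in $B$, then $v_1$ would have in-degree at least two in $B$, contradicting the fact that in a coherently oriented cycle every vertex has in-degree exactly one. After possibly swapping the roles of $v_0$ and $v_2$, I may therefore assume $e_1 \in E(B)$ and $e_2 \notin E(B)$, and I set $x \coloneqq e_1$. Writing $B$ as $w_0 \to w_1 \to \cdots \to w_{n-1} \to w_0$ with $w_0 = v_0$ and $w_1 = v_1$, the subgraph $B \setminus \{x\}$ is a simple directed path $P\colon v_1 = w_1 \to w_2 \to \cdots \to w_{n-1} \to w_0 = v_0$, and $A \cup B \setminus \{x\}$ is obtained from $P$ by attaching the edge $e_2 = (v_2, v_1)$ at the initial vertex $v_1$ of $P$.

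The crux of the argument is to show, using the uniqueness hypothesis, that $v_2$ does not lie on $P$. Indeed, if $v_2 = w_j$ for some $j \in \{2,\dots,n-1\}$, then the initial segment $w_1 \to w_2 \to \cdots \to w_j$ of $P$ together with $e_2 = (w_j, w_1)$ would form a coherently oriented cycle in $A \cup B$ of length $j < n$, distinct from $B$ and contradicting the assumed uniqueness; the possibilities $v_2 = v_0$ and $v_2 = v_1$ are ruled out because the three vertices of the linear sink are distinct. Once $v_2 \notin V(P)$ is established, $A \cup B \setminus \{x\}$ is the single simple directed path $v_2 \to v_1 \to w_2 \to \cdots \to v_0$ together with isolated vertices, hence a multipath. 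The main obstacle is precisely this last step, since it is the only place where the uniqueness-of-cycle hypothesis enters; the remaining arguments are pure bookkeeping once the structural descriptions of $A$ and $B$ are in place.
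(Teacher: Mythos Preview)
Your proof is correct and follows essentially the same approach as the paper's: show that $E(A)\cap E(B)$ is a single edge, remove it, and verify that what remains is a simple directed path. Your argument is in fact more careful than the paper's, since you make explicit the one place where the uniqueness-of-cycle hypothesis is needed (ruling out $v_2$ lying on $P$), whereas the paper's proof leaves this implicit in a reference to figures.
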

\proof  The digraph $A \cup B$ contains one of the subgraphs displayed in Figures~\ref{fig:AuB} and~\ref{fig:AuBdigon}. Observe that $A \cap B$ consist of a single edge $e$. This edge -- depicted in red in Figures~\ref{fig:AuB} and~\ref{fig:AuBdigon}, for the sake of visualization -- if removed from $A \cup B \setminus \{e\} $ yields a coherently oriented linear graph. \endproof
\begin{figure}[h]
	\centering
	\begin{subfigure}[b]{0.3\textwidth}
		\centering
	\begin{tikzpicture}[baseline=(current bounding box.center),scale =.85]
		\tikzstyle{point}=[circle,thick,draw=black,fill=black,inner sep=0pt,minimum width=2pt,minimum height=2pt]
		\tikzstyle{arc}=[shorten >= 8pt,shorten <= 8pt,->, thick]
		
		\node[above] (v0) at (0,0) {$v_0$};
		\draw[fill] (0,0)  circle (.05);
		\node[above] (v1) at (1.5,0) {$v$};
		\draw[fill] (1.5,0)  circle (.05);
		\node[above] (v2) at (3,1) {$v_{1}$};
		\draw[fill] (3,1)  circle (.05);
		\node[above] (v3) at (3,-1) {$v_{2}$};
		\draw[fill] (3,-1)  circle (.05);
		
		\draw[thick, red,  -latex]  (0.15,0) -- (1.35,0) ;
		\draw[thick, gray,  -latex]  (2.85,0.95) -- (1.65,0.05) ;
		\draw[thick, gray, -latex] (1.65,-0.05) -- (2.85,-0.95);
	\end{tikzpicture}
				\caption{\phantom{A}}
\end{subfigure}
	\begin{subfigure}[b]{0.3\textwidth}
	\centering
	\begin{tikzpicture}[baseline=(current bounding box.center),scale =.85]
		\tikzstyle{point}=[circle,thick,draw=black,fill=black,inner sep=0pt,minimum width=2pt,minimum height=2pt]
		\tikzstyle{arc}=[shorten >= 8pt,shorten <= 8pt,->, thick]
		
		\node[above] (v0) at (0,0) {$v_0$};
		\draw[fill] (0,0)  circle (.05);
		\node[above] (v1) at (1.5,0) {$v$};
		\draw[fill] (1.5,0)  circle (.05);
		\node[above] (v2) at (3,1) {$v_{1}$};
		\draw[fill] (3,1)  circle (.05);
		\node[above] (v3) at (3,-1) {$v_{2}$};
		\draw[fill] (3,-1)  circle (.05);
		
		\draw[thick, red, -latex](1.35,0) --  (0.15,0)  ;
		\draw[thick, gray, -latex] (1.65,0.05) -- (2.85,0.95) ;
		\draw[thick, gray, -latex] (2.85,-0.95) -- (1.65,-0.05);
	\end{tikzpicture}
				\caption{ \phantom{B }}
\end{subfigure}
	\caption{The possible (local) configurations of union of a linear circuit with a coherently oriented cycle of length greater than 2.}
	\label{fig:AuB}
\end{figure}
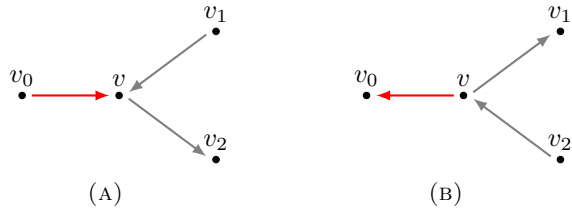

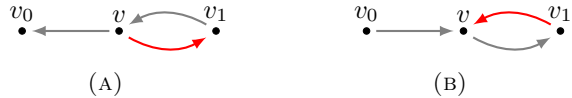
\begin{figure}[h]
	\centering
	\begin{subfigure}[b]{0.3\textwidth}
		\centering
	\begin{tikzpicture}[baseline=(current bounding box.center),scale =.85]
		\tikzstyle{point}=[circle,thick,draw=black,fill=black,inner sep=0pt,minimum width=2pt,minimum height=2pt]
		\tikzstyle{arc}=[shorten >= 8pt,shorten <= 8pt,->, thick]
		
		\node[above] (v0) at (0,0) {$v_0$};
		\draw[fill] (0,0)  circle (.05);
		\node[above] (v1) at (1.5,0) {$v$};
		\draw[fill] (1.5,0)  circle (.05);
		\node[above] (v2) at (3,0) {$v_{1}$};
		\draw[fill] (3,0)  circle (.05);

		\draw[thick, gray,  -latex]  (1.35,0) -- (0.15,0) ;
		\draw[thick, red,  -latex]  (1.65,-0.1) to [bend right]  (2.85,-0.1) ;
		\draw[thick, gray, -latex]   (2.85,0.1) to [bend right] (1.65,0.1) ;
	\end{tikzpicture}
		\caption{\phantom{A}}
	\end{subfigure}
	\begin{subfigure}[b]{0.3\textwidth}
	\centering
	\begin{tikzpicture}[baseline=(current bounding box.center),scale =.85]
		\tikzstyle{point}=[circle,thick,draw=black,fill=black,inner sep=0pt,minimum width=2pt,minimum height=2pt]
		\tikzstyle{arc}=[shorten >= 8pt,shorten <= 8pt,->, thick]
		
		\node[above] (v0) at (0,0) {$v_0$};
		\draw[fill] (0,0)  circle (.05);
		\node[above] (v) at (1.5,0) {$v$};
		\draw[fill] (1.5,0)  circle (.05);
		\node[above] (v1) at (3,0) {$v_{1}$};
		\draw[fill] (3,0)  circle (.05);
		
		\draw[thick, gray,   -latex] (0.15,0) -- (1.35,0);
		\draw[thick, gray,  -latex]  (1.65,-0.1) to [bend right]  (2.85,-0.1) ;
		\draw[thick, red,  -latex]  (2.85,0.1) to [bend right] (1.65,0.1) ;
	\end{tikzpicture}
				\caption{\phantom{B}}
\end{subfigure}
	\caption{The possible (local) configurations of union of a linear circuit with a coherently oriented cycle of length 2.}
	\label{fig:AuBdigon}
\end{figure}
Now we are ready to prove Theorem~\ref{thm:boom}.
\begin{proof}[Proof of Theorem~\ref{thm:boom}]
In this proof we will use the definition of matroid in terms of circuits -- see  Remark~\ref{defn:matroidcircuits}. By Remark~\ref{rmk:looped}, we can assume that $G$ is without loops.

Suppose first that $M_G=(E(G), \Mult(G))$ is a matroid. Since the pair $(E(A_3),\Mult(A_3))$ is not a matroid, Remark~\ref{rmk:circ} implies that $G$ cannot contain the digraph in Figure~\ref{fig:avoiding}.(B). Analogously,~$G$ cannot contain the graph in Figure~\ref{fig:avoiding}.(A). Then, Condition~\ref{MP1} follows. Now, we turn to the proof of~\ref{MP2}.
By Corollary~\ref{cor:circ}, the set of circuits of $M_G$ can be identified with the set of spanning subgraphs of $G$ that are  linear sinks, linear sources or coherently oriented cycles.  
Let $A$ and $B$ be circuits of $M_G$.
Condition~\ref{C3} in the definition of matroid is satisfied if, and only if, for every $x \in E(A \cap B)$ the subgraph $A \cup B \setminus \{x\}$ is not a multipath.
Note that the intersection of two distinct coherently oriented cycles of length two is empty, cf.~Remark~\ref{rem:digons}. 
Let  $C$ be a coherently oriented cycle of $G$ whose length is greater than~$2$. We want to prove that $C$ cannot intersect any linear sink or source in $G$. Observe that either we are in the hypotheses of Lemma~\ref{lem:C2} -- and thus we are done, or we have a subgraph of the form illustrated in Figure~\ref{fig:AuBdigon}. In this latter case, Remark~\ref{rmk:circ} yields a contradiction; in fact, the multipaths of either graph in Figure~\ref{fig:AuBdigon} do not form a matroid. 

Now, we want to analyse the intersections between $C$ and another coherently oriented cycle~$C'$ (possibly of length $2$). 
The connected component of $C \cup C'$ which is not a vertex must contain one of the digraphs in Figure~\ref{fig:Cyrcdigon}.
In all such cases, there exists a linear sink or a linear source (highlighted in red Figure~\ref{fig:Cyrcdigon}) in $G$ that intersects $C$. This is a contradiction by~ Remark~\ref{rmk:circ}, and Condition~\ref{MP2} follows.
 
\begin{figure}[h]
	\centering
	\begin{subfigure}[b]{0.3\textwidth}
		\centering
	\begin{tikzpicture}[baseline=(current bounding box.center),scale =.85]
		\tikzstyle{point}=[circle,thick,draw=black,fill=black,inner sep=0pt,minimum width=2pt,minimum height=2pt]
		\tikzstyle{arc}=[shorten >= 8pt,shorten <= 8pt,->, thick]
		
		\node[above] (v0) at (0,0) {$v_0$};
		\draw[fill] (0,0)  circle (.05);
		\node[above] (v1) at (1.5,0) {$v$};
		\draw[fill] (1.5,0)  circle (.05);
		\node[above] (v2) at (3,1) {$v_{1}$};
		\draw[fill] (3,1)  circle (.05);
		\node[above] (v3) at (3,-1) {$v_{2}$};
		\draw[fill] (3,-1)  circle (.05);
		
		\draw[thick, gray,  -latex] (0.15,0) -- (1.35,0);
		\draw[thick, red, -latex] (1.65,0.05) -- (2.85,0.95);
		\draw[thick, red,  -latex] (1.65,-0.05) -- (2.85,-0.95);
	\end{tikzpicture}
		\caption{\phantom{A}}
	\end{subfigure}
	\begin{subfigure}[b]{0.3\textwidth}
	\centering
	\begin{tikzpicture}[baseline=(current bounding box.center),scale =.85]
		\tikzstyle{point}=[circle,thick,draw=black,fill=black,inner sep=0pt,minimum width=2pt,minimum height=2pt]
		\tikzstyle{arc}=[shorten >= 8pt,shorten <= 8pt,->, thick]
		
		\node[above] (v0) at (0,0) {$v_0$};
		\draw[fill] (0,0)  circle (.05);
		\node[above] (v1) at (1.5,0) {$v$};
		\draw[fill] (1.5,0)  circle (.05);
		\node[above] (v2) at (3,1) {$v_{1}$};
		\draw[fill] (3,1)  circle (.05);
		\node[above] (v3) at (3,-1) {$v_{2}$};
		\draw[fill] (3,-1)  circle (.05);
		
		\draw[thick, gray, -latex] (1.35,0) -- (0.15,0) ;
		\draw[thick, red,  -latex] (2.85,0.95) -- (1.65,0.05);
		\draw[thick, red,  -latex]  (2.85,-0.95) -- (1.65,-0.05) ;
	\end{tikzpicture}
				\caption{\phantom{B}}
\end{subfigure}
~\\\vspace{2em}
	\begin{subfigure}[b]{0.3\textwidth}
		\centering
	\begin{tikzpicture}[baseline=(current bounding box.center),scale =.85]
		\tikzstyle{point}=[circle,thick,draw=black,fill=black,inner sep=0pt,minimum width=2pt,minimum height=2pt]
		\tikzstyle{arc}=[shorten >= 8pt,shorten <= 8pt,->, thick]
		
		\node[left] (v0) at (0,0) {$v_0$};
		\draw[fill] (0,0)  circle (.05);
		\node[right] (v1) at (2,0) {$v_1$};
		\draw[fill] (2,0)  circle (.05);
		\node[above] (v2) at (1,1.7) {$v_{2}$};
		\draw[fill] (1,1.7)  circle (.05);

		\draw[thick, gray,  -latex]  (0.15,-0.05) to [bend right] (1.85,-0.05);
  		\draw[thick, red,   -latex]  (1.85,0.05) to [bend right] (0.15,0.05);
		\draw[thick,  gray,  -latex] (0.85,1.55) to (0.15,0.15);
    	\draw[thick, red,  -latex] (1.85,0.15) -- (1.15,1.55) ;
	\end{tikzpicture}
	 \caption{\phantom{A}}
	\end{subfigure}
	\begin{subfigure}[b]{0.4\textwidth}
	\centering
	\begin{tikzpicture}[baseline=(current bounding box.center),scale =.85]
		\tikzstyle{point}=[circle,thick,draw=black,fill=black,inner sep=0pt,minimum width=2pt,minimum height=2pt]
		\tikzstyle{arc}=[shorten >= 8pt,shorten <= 8pt,->, thick]
		
		\node[above] (v0) at (0,0) {$v_0$};
		\draw[fill] (0,0)  circle (.05);
		\node[above] (v) at (1.5,0) {$v$};
		\draw[fill] (1.5,0)  circle (.05);
		\node[above] (v1) at (3,0) {$v_{1}$};
		\draw[fill] (3,0)  circle (.05);
		 \node[above] (v3) at (4.5,0) {$v_{2}$};
		 \draw[fill] (4.5,0)  circle (.05);
		
		\draw[thick, red,   -latex] (0.15,0) -- (1.35,0);
		\draw[thick, gray, -latex]  (1.65,-0.05) to [bend right]  (2.85,-0.05) ;
		\draw[thick, red,   -latex]  (2.85,0.05) to [bend right] (1.65,0.05) ;
  	\draw[thick, gray,  -latex] (3.15,0) -- (4.35,0);
	\end{tikzpicture}
	 \caption{\phantom{A}}
\end{subfigure}
	\caption{The (local) configurations near the intersections of two coherently oriented cycles.}
	\label{fig:Cyrcdigon}
\end{figure}
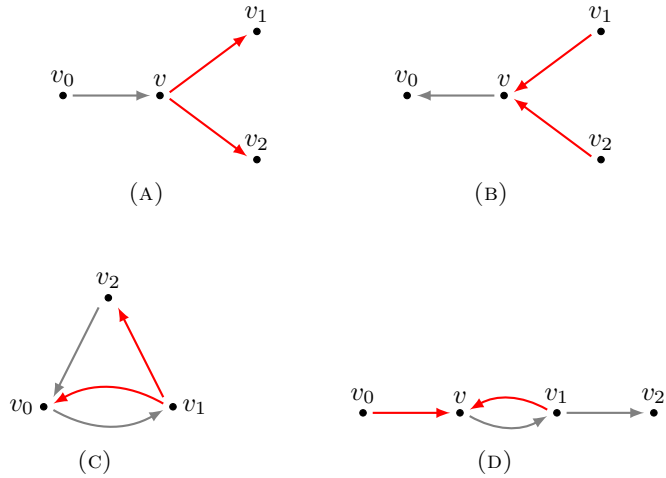
\emph{Vice versa}, we want to prove  that, if $G$ satisfies Conditions~\ref{MP1} and~\ref{MP2}, then the pair $M_G=(E(G), \Mult(G))$ is a matroid.  
By definition of multipaths, every circuit of $M_G$ must have at least one edge -- thus, \ref{C1} follows. 
Proposition~\ref{prop:minimal} identifies the possible circuits in~$M_G$, and also Condition~\ref{C2} immediately follows from this description. 
It remains to check Condition~\ref{C3} of Remark~\ref{defn:matroidcircuits}. 
By Condition~\ref{MP2}, the set of edges of any coherently oriented cycle is disjoint from the set of edges of any other circuit. Consequently, we need to check that Condition~\ref{C3} is satisfied for circuits $A$ and $B$  both containing a linear sink or source and having a common edge. Observe that by Condition~\ref{MP1}, the set of edges of the graph $A \cup B$ cannot be arranged as in one of the digraphs in Figure~\ref{fig:avoiding}. Therefore, $A \cup B$ must be, up to the disjoint union with some vertices, one of the graphs displayed in Figure~\ref{fig:usink}, where the edge in $A \cap B$ is highlighted in red. In both cases, the subgraph spanned by the vertices $\{ v, v_1, v_2\}$ (in gray in~Figure~\ref{fig:usink}) is a circuit, hence concluding the proof.
\begin{figure}[h]
	\centering
	\begin{subfigure}[b]{0.3\textwidth}
		\centering
	\begin{tikzpicture}[baseline=(current bounding box.center),scale =.85]
		\tikzstyle{point}=[circle,thick,draw=black,fill=black,inner sep=0pt,minimum width=2pt,minimum height=2pt]
		\tikzstyle{arc}=[shorten >= 8pt,shorten <= 8pt,->, thick]
		
		\node[above] (v0) at (0,0) {$v_0$};
		\draw[fill] (0,0)  circle (.05);
		\node[above] (v1) at (1.5,0) {$v$};
		\draw[fill] (1.5,0)  circle (.05);
		\node[above] (v2) at (3,1) {$v_{1}$};
		\draw[fill] (3,1)  circle (.05);
		\node[above] (v3) at (3,-1) {$v_{2}$};
		\draw[fill] (3,-1)  circle (.05);
		
		\draw[thick, red,  -latex]  (1.35,0) -- (0.15,0) ;
		\draw[thick, gray, -latex] (1.65,0.05) -- (2.85,0.95);
		\draw[thick, gray, -latex] (1.65,-0.05) -- (2.85,-0.95);
	\end{tikzpicture}
		\caption{\phantom{A}}
	\end{subfigure}
	\begin{subfigure}[b]{0.3\textwidth}
	\centering
	\begin{tikzpicture}[baseline=(current bounding box.center),scale =.85]
		\tikzstyle{point}=[circle,thick,draw=black,fill=black,inner sep=0pt,minimum width=2pt,minimum height=2pt]
		\tikzstyle{arc}=[shorten >= 8pt,shorten <= 8pt,->, thick]
		
		\node[above] (v0) at (0,0) {$v_0$};
		\draw[fill] (0,0)  circle (.05);
		\node[above] (v1) at (1.5,0) {$v$};
		\draw[fill] (1.5,0)  circle (.05);
		\node[above] (v2) at (3,1) {$v_{1}$};
		\draw[fill] (3,1)  circle (.05);
		\node[above] (v3) at (3,-1) {$v_{2}$};
		\draw[fill] (3,-1)  circle (.05);
		
		\draw[thick, red, -latex]  (0.15,0) -- (1.35,0) ;
		\draw[thick, gray, -latex] (2.85,0.95) -- (1.65,0.05);
		\draw[thick, gray, -latex]  (2.85,-0.95) -- (1.65,-0.05) ;
	\end{tikzpicture}
				\caption{\phantom{B}}
\end{subfigure}
	\caption{The union of two linear sources (A) and of two linear sinks (B).}
	\label{fig:usink}
\end{figure}
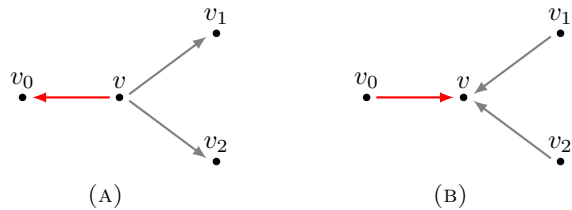
\end{proof}

In view of Theorem~\ref{thm:boom}, we give the following definition.

\begin{defn}
    A digraph $G$ is called an \emph{MP-digraph} if it satisfies Conditions~\ref{MP1} and \ref{MP2} of Theorem~\ref{thm:boom}.
\end{defn}

We provide some (non-)examples of MP-digraphs. Recall that a transitive tournament is a directed graph on vertices $0,\dots , n$ with
edges $(i, j)$ for all $i \leq  j$.

\begin{example} If $T_n$ is a transitive tournament on $n>2$ vertices,  then $M_{T_n}$ is not a multipath matroid. In fact, every transitive tournament with more than $3$ vertices contains a subgraph isomorphic to the digraph in Figure~\ref{fig:avoiding}.(A).  
\end{example}
\begin{example}
Consider the digraph in Figure~\ref{fig:noeqioriented}. Observe that the underlying unoriented graph has a cycle which is not a connected component. Nevertheless this cycle is not coherently oriented, and the graph has no subgraphs isomorphic to the ones in Figure~\ref{fig:avoiding}. Consequently $M_G$ is a multipath matroid. 
\begin{figure}[h]
    \centering
    \begin{tikzpicture}[thick,scale = 1.5]
        \node (a1) at (1,0) {};
        \node (a2) at (2,0) {};
        \node (b1) at (1,1) {};
        \node (b2) at (2,1) {};
        \node (c2) at (3,-0.5) {};
        
        \node[below left]  at (1,0) {};
        \node[below right] at (2,0) {};
        \node[above] at (1,1) {};
        \node[above] at (2,1) {};
        
        \draw[fill] (a1) circle (0.03);
        \draw[fill] (b1) circle (0.03);
        \draw[fill] (a2) circle (0.03);
        \draw[fill] (b2) circle (0.03);
        \draw[fill] (c2) circle (0.03);

        \draw[-latex,] (a1) -- (a2);
        \draw[-latex, ] (b1) -- (b2);
        \draw[-latex, ] (b2) -- (a2);
        \draw[-latex, ] (b1) -- (a1);
        \draw[-latex, ] (a2) -- (c2);
    \end{tikzpicture}
    \caption{A digraph with a (non-coherently oriented) cycle that is not a connected component.}
    \label{fig:noeqioriented}
\end{figure}
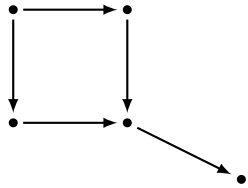
\end{example}

As an immediate consequence of  Remark~\ref{rmk:circ}, the following theorem holds: 
\begin{theorem}\label{thm:components} Let $G$ be an  MP-digraph and let $G_1 , \dots, G_k$ be its connected components. Then, 
\[M_G=M_{G_1} \oplus \dots \oplus M_{G_k}.\]
\end{theorem}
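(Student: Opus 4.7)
The plan is to show that both the ground set and the circuits of $M_G$ decompose over the connected components $G_1,\dots,G_k$, so that $M_G$ matches $M_{G_1}\oplus\dots\oplus M_{G_k}$ on the nose. The argument proceeds essentially by bookkeeping, leveraging Corollary~\ref{cor:circ} (the circuit description of multipath matroids) and Remark~\ref{rmk:circ} (that multipath matroids restrict well to subgraphs).

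First, I would observe that, since each edge of $G$ has both endpoints lying in the same connected component, the ground set splits as the disjoint union $E(G)=E(G_1)\sqcup\dots\sqcup E(G_k)$. This matches, as a set, the ground set of the direct sum matroid in Definition~\ref{defn:summat}. Moreover, since each $G_i$ is a subgraph of the MP-digraph $G$, Remark~\ref{rmk:circ} ensures that each $(E(G_i),\Mult(G_i))$ is itself a multipath matroid, so the right-hand side of the claimed equality makes sense.

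Next, I would appeal to Corollary~\ref{cor:circ}: the circuits of $M_G$ are precisely the subgraphs that (up to disjoint union with isolated vertices) are linear sinks, linear sources, loops, or coherently oriented cycles. In each of these cases, the subgraph supported on the edges of the circuit is connected, so the edge set of any circuit of $M_G$ is entirely contained in some $E(G_i)$. The same corollary, applied to each $G_i$, then identifies the circuits of $M_{G_i}$ with exactly those circuits of $M_G$ whose edges lie in $G_i$. Consequently, the set of circuits of $M_G$ is the disjoint union $\mc{C}(M_{G_1})\sqcup\dots\sqcup\mc{C}(M_{G_k})$.

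Finally, I would invoke the circuit description of a direct sum of matroids: an easy consequence of Definition~\ref{defn:summat} is that $\mc{C}(M_1\oplus M_2)=\mc{C}(M_1)\sqcup\mc{C}(M_2)$ inside $X_1\sqcup X_2$. Iterating, the circuits of $M_{G_1}\oplus\dots\oplus M_{G_k}$ are exactly $\mc{C}(M_{G_1})\sqcup\dots\sqcup\mc{C}(M_{G_k})$ as subsets of $E(G_1)\sqcup\dots\sqcup E(G_k)$. Since $M_G$ and $M_{G_1}\oplus\dots\oplus M_{G_k}$ share the same ground set and the same family of circuits, they are equal as matroids, which is the desired conclusion. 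I do not foresee a substantive obstacle; the only point requiring mild care is noting that each of the configurations in Corollary~\ref{cor:circ} is connected on its edge-support, which is immediate by inspection.
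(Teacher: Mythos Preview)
Your argument is correct. The only difference from the paper's proof is the axiomatisation you work with: the paper verifies directly that the \emph{independent sets} match, observing that a multipath of $G$ is nothing but a disjoint union of multipaths of the $G_i$ (so $\Mult(G)=\{I_1\sqcup\cdots\sqcup I_k : I_j\in\Mult(G_j)\}$), which is exactly the family of independents of the direct sum in Definition~\ref{defn:summat}. You instead go through the \emph{circuit} description via Corollary~\ref{cor:circ}, noting that every circuit has connected edge-support and hence lies in a single component. Both routes are short and valid; the paper's is marginally more direct since it does not need the classification of circuits, while yours makes explicit why nothing can ``straddle'' two components.
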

\proof Without loss of generality, we can assume  $k=2$. In this case, we have that $E(G)= E(G_1) \sqcup E(G_2)$. Moreover, a multipath for $G$ is the union of a multipath for $G_1$ and of a multipath for $G_2$; in particular, $\Mult(G)=\Mult(G_1) \sqcup \Mult(G_2)$. By Remark~\ref{rmk:circ} both $M_{G_1}$ and $M_{G_2}$ are multipath matroids. Then, by Definition~\ref{defn:summat}, $M_G=M_{G_1}\oplus M_{G_2}$. \endproof

Observe that if $G'$ is obtained from $G$ by edge reversing, i.e.~by reversing the orientation of all edges, then the matroids $M_G$ and $M_{G'}$ are isomorphic. It is then natural to ask if the multipath matroid uniquely determines  a directed graph $G$ up to edge reversing. The answer to this question is negative. In fact, the graphs in Figure~\ref{fig:isoM} have  isomorphic path posets and isomorphic multipath matroids. Note that their dynamical modules are isomorphic. This is not always the case, take for instance $A_2$ and the cycle of length $2$.

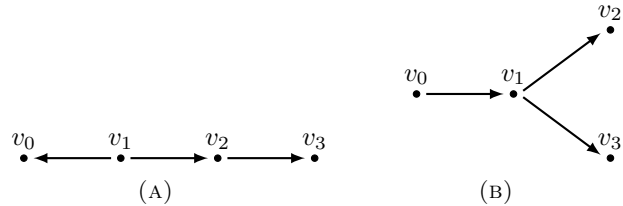
\begin{figure}[h]
	\centering
	\begin{subfigure}[b]{0.3\textwidth}
		\centering
	\begin{tikzpicture} [baseline=(current bounding box.center),scale =.85]
		\tikzstyle{point}=[circle,thick,draw=black,fill=black,inner sep=0pt,minimum width=2pt,minimum height=2pt]
		\tikzstyle{arc}=[shorten >= 8pt,shorten <= 8pt,->, thick]
		
		\node[above] (v0) at (0,0) {$v_0$};
		\draw[fill] (0,0)  circle (.05);
		\node[above] (v1) at (1.5,0) {$v_1$};
		\draw[fill] (1.5,0)  circle (.05);
		\node[above] (v2) at (3,0) {$v_{2}$};
		\draw[fill] (3,0)  circle (.05);
		\node[above] (v3) at (4.5,0) {$v_{3}$};
		\draw[fill] (4.5,0)  circle (.05);
		
		\draw[thick,  -latex] (1.35,0) -- (0.15,0);
		\draw[thick,  -latex] (1.65,0) -- (2.85,0);
		\draw[thick,  -latex] (3.15,0) -- (4.35,0);
	\end{tikzpicture}
		\caption{\phantom{A}}\label{fig:A}
	\end{subfigure}
	\begin{subfigure}[b]{0.3\textwidth}
	\centering
	\begin{tikzpicture} [baseline=(current bounding box.center),scale =.85]
		\tikzstyle{point}=[circle,thick,draw=black,fill=black,inner sep=0pt,minimum width=2pt,minimum height=2pt]
		\tikzstyle{arc}=[shorten >= 8pt,shorten <= 8pt,->, thick]
		
		\node[above] (v0) at (0,0) {$v_0$};
		\draw[fill] (0,0)  circle (.05);
		\node[above] (v1) at (1.5,0) {$v_1$};
		\draw[fill] (1.5,0)  circle (.05);
		\node[above] (v2) at (3,1) {$v_{2}$};
		\draw[fill] (3,1)  circle (.05);
		\node[above] (v3) at (3,-1) {$v_{3}$};
		\draw[fill] (3,-1)  circle (.05);
		
		\draw[thick,  -latex] (0.15,0) -- (1.35,0)  ;
		\draw[thick,  -latex] (1.65,0.05) -- (2.85,0.95);
		\draw[thick,  -latex] (1.65,-0.05) -- (2.85,-0.95) ;
	\end{tikzpicture}
				\caption{ \phantom{C }}\label{fig:Y}
\end{subfigure}
	\caption{Two digraph with isomorphic multipath matroids}
	\label{fig:isoM}
\end{figure}

\subsection{Multipath Matroids are graphic.}
The aim of this subsection is to prove some general properties of multipath matroids. In particular, we compare them to graphic matroids. 

We start by studying the representability of multipath matroids. Recall that a matroid is binary if it is representable over the finite field $\mathbb{F}_2$, and regular if it is representable over any field.

\begin{theorem}
Let $G$ be an MP-digraph. Then $M_G$ is a binary matroid.
\end{theorem}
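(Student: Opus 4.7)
The plan is to decompose $M_G$ as a direct sum of small uniform matroids, each of which is visibly binary, and then use that a direct sum of binary matroids is binary. First I would reduce to the loopless, connected case. By Theorem~\ref{thm:components}, $M_G$ splits as a direct sum over the connected components of $G$; by Remark~\ref{rmk:looped}, removing the $k$ loops of $G$ only changes $M_G$ by a summand $U_{0,1}^{\oplus k}$. Thus, it suffices to represent $M_{\widetilde G}$ over $\mathbb{F}_2$ when $\widetilde G$ is a loopless connected MP-digraph.

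Then I would split into two subcases. If $\widetilde G$ contains a coherently oriented cycle of length $\geq 2$, Condition~\ref{MP2} together with connectedness forces $\widetilde G = C_n$, and Example~\ref{ex:cycle} gives $M_{\widetilde G}\cong U_{n-1,n}$, which is the cycle matroid of the undirected $n$-cycle and hence binary. Otherwise, $\widetilde G$ contains no directed cycle (so it is a DAG) and, by Corollary~\ref{cor:circ}, the circuits of $M_{\widetilde G}$ are exactly the linear sinks (pairs of incoming edges at some vertex) and linear sources (pairs of outgoing edges at some vertex).

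The heart of the argument is the following partition of $E(\widetilde G)$. For each vertex $v$ with $d^-(v)\geq 2$, I would set the \emph{V-cluster} $\mathcal V_v$ to be the set of incoming edges at $v$; for each $u$ with $d^+(u)\geq 2$, the \emph{$\Lambda$-cluster} $\mathcal L_u$ to be the set of outgoing edges at $u$. Distinct V-clusters (resp.\ distinct $\Lambda$-clusters) are automatically disjoint since an edge has a unique target (resp.\ source). The essential use of Condition~\ref{MP1} is exactly at this point: the forbidden $A_3$-pattern in Figure~\ref{fig:avoiding}(B) prevents any edge $(u,v)$ from lying simultaneously in a V-cluster at $v$ and a $\Lambda$-cluster at $u$, because that would produce the configuration $v_0\to v\leftarrow u\to v_1$. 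Thus the V-clusters, $\Lambda$-clusters, and the remaining \emph{free} edges (those with $d^-(v)=1$ and $d^+(u)=1$) partition $E(\widetilde G)$.

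Finally, I would check that this partition induces a direct sum decomposition of $M_{\widetilde G}$. Since $\widetilde G$ is a DAG, a spanning subgraph is a multipath iff each vertex has in-degree and out-degree at most $1$ in it (no directed cycles are available to interfere), and this condition factors across the partition into independent local conditions on each cluster. The restriction of $M_{\widetilde G}$ to a cluster of size $n$ is therefore the uniform matroid $U_{1,n}$, and the restriction to a free edge is $U_{1,1}$; combined with the $U_{n-1,n}$ from cycle-components and the $U_{0,1}$'s from loops, every summand is binary (indeed graphic: a bouquet of $n$ parallel edges, a single edge, an undirected $n$-cycle, or a loop), so $M_G$ is binary. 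The only delicate point I expect is the verification of the direct sum decomposition, i.e.\ that the multipath condition really factors across the partition; this is where the DAG property is used to rule out spurious cycle-type obstructions and where the disjointness guaranteed by \ref{MP1} is essential.
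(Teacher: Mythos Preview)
Your argument is essentially correct and is a genuinely different route from the paper's.  The paper proves binariness directly via the criterion that a matroid is binary iff the symmetric difference of any two circuits contains a circuit: by Condition~\ref{MP2} the only way two circuits can overlap is as in Figure~\ref{fig:usink}, and in that case the symmetric difference is itself a circuit.  You instead establish (in effect) the structure theorem that the paper proves only later as Theorem~\ref{thm:uniformprduct}: your V-clusters, $\Lambda$-clusters, and free edges are precisely the dynamical modules of Corollary~\ref{cor:dinregions}, identified by hand without the dynamical-region machinery.  The paper's proof is shorter; yours does more work but yields the full direct-sum decomposition into $U_{1,n}$'s, $U_{n-1,n}$'s and $U_{0,1}$'s in one stroke, from which binariness (indeed graphicness) is immediate.

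There is one small gap in your disjointness step.  When you argue that an edge $(u,v)$ cannot lie in both $\mathcal V_v$ and $\mathcal L_u$, you produce witnesses $(u',v)$ and $(u,v')$ and appeal to the forbidden $A_3$ of Figure~\ref{fig:avoiding}(B).  That only works if $u,v,u',v'$ are four distinct vertices.  The only coincidence not already ruled out by looplessness and $u'\neq u$, $v'\neq v$ is $u'=v'$; in that case the three vertices $u,u'=v',v$ carry the edges $u\to u'$, $u'\to v$, $u\to v$, which is exactly the forbidden triangle of Figure~\ref{fig:avoiding}(A).  So you need to invoke both patterns of Condition~\ref{MP1}, not just (B).  With that one-line addition your proof goes through.
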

\proof By \cite[Chapter 10]{welshmath}, a matroid $M$ is binary if and only if for every pair of circuits of $M$, their symmetric difference contains another circuit.  
In view of Theorem~\ref{thm:boom}, the edges of two circuits in $M_G$ are either disjoint or are arranged as in Figure~\ref{fig:usink}.
In the former case, their  symmetric difference  is equal to their union; hence, it contains a circuit. 
If the edges of the two circuits are as in Figure~\ref{fig:usink}, the edges in the symmetric difference of the two circuits are those depicted in gray -- in particular, they are the edges of a circuit. Hence, 
$M_G$ is binary.
\endproof

Recall that a graphic matroid is a matroid which is isomorphic to $(E(G'), \mc{I}_{G'} )$, for some (unoriented) graph $G'$, where $\mc{I}_{G'}$ is the set of subforests in $G'$ (including the empty set). 
Every graphic matroid is binary. Then, a natural question is  whether  multipath matroids are also graphic. Aiming to give a complete answer to this question, we need some preliminary results connecting the  matroid structure of $M_G$ with  properties of the dynamical regions of $G$ -- see Section~\ref{sec:dynreg}. We start with the following theorem, which is a rephrasing of \cite[Theorem 4.11]{terzo} in terms of path posets:
\begin{theorem}\label{thm:modules}  Let $G_1 , \dots, G_k$ be dynamical modules for a connected digraph $G$. Then, we have the decomposition
\[P_G = P_{G_1} \times \dots  \times P_{G_k} \]
of the path poset $P_G$ as product of path posets.
\end{theorem}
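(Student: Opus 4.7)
The plan is to exhibit a natural poset isomorphism between $P_G$ and $P_{G_1}\times\dots\times P_{G_k}$ arising from the edge partition of $G$ into modules. By \cite[Lemma~4.10]{terzo} each edge of $G$ lies in a unique module, hence $E(G)=E(G_1)\sqcup\dots\sqcup E(G_k)$. The natural candidates for mutually inverse maps are
\begin{equation*}
\Phi\colon P_G\longrightarrow P_{G_1}\times\dots\times P_{G_k},\qquad H\longmapsto (H\cap G_1,\dots,H\cap G_k),
\end{equation*}
where $H\cap G_i$ is the spanning subgraph of $G_i$ with edge set $E(H)\cap E(G_i)$, and
\begin{equation*}
\Psi\colon (H_1,\dots,H_k)\longmapsto H_1\cup\dots\cup H_k,
\end{equation*}
viewed as a spanning subgraph of $G$.

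Well-definedness of $\Phi$ is immediate: restricting a disjoint union of simple directed paths to any subset of edges yields a disjoint union of simple subpaths and isolated vertices. Both $\Phi$ and $\Psi$ are clearly order-preserving, and $\Phi\circ\Psi$, $\Psi\circ\Phi$ are tautologically the identity thanks to the edge partition. The heart of the argument, and the main obstacle, is to show that $\Psi(H_1,\dots,H_k)$ is actually a multipath in $G$, i.e.~that the union introduces neither a vertex of in- or out-degree greater than one, nor a cycle.

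For the degree condition I would first establish the following structural lemma: at each vertex $v\in V(G)$, all incoming edges at $v$ lie in a single module, and likewise all outgoing edges at $v$ lie in a single module. If $v$ belongs to two distinct modules $G_i$ and $G_j$, then $v$ is in the boundary of each, so by Definition~\ref{def:dynamicalregion}\ref{item:dandelion} $v$ is unstable in $G$ but stable in $G_i$, $G_j$, and in both complements. Stability in $G_i$ forces the edges of $G_i$ at $v$ to share a common orientation, and the opposite orientation is then forced on $C_G(G_i)$ (otherwise $v$ would be stable in $G$), pinpointing \emph{the} module containing all in-edges at $v$ and \emph{the} module containing all out-edges at $v$. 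With this lemma, at every $v$ each $H_i$ contributes at most one incoming and one outgoing edge, and the incoming (resp.~outgoing) contribution comes from a single module, so the union satisfies the degree bound.

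Acyclicity of $\Psi(H_1,\dots,H_k)$ then follows from Definition~\ref{def:dynamicalregion}\ref{item:cicle}: any oriented cycle in $G$ is confined to a single module, so its appearance in the union would force it to lie entirely in some $H_i$, contradicting that $H_i$ is a multipath; any unoriented cycle would in turn force a vertex of in- or out-degree at least two, already ruled out. Combining these steps yields the desired poset isomorphism. The hard part is really the structural lemma, which is the combinatorial content behind \cite[Theorem~4.11]{terzo}; once it is available, the rest of the proof is a matter of bookkeeping.
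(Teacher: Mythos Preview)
The paper does not actually prove this statement: it is introduced as ``a rephrasing of \cite[Theorem~4.11]{terzo} in terms of path posets'' and is simply invoked without argument. Your proposal supplies a correct, self-contained proof of what the paper delegates to the reference. The structural lemma you isolate---that at any vertex all incoming edges lie in a single module and all outgoing edges lie in a single module---is indeed the combinatorial core, and your derivation of it from Definition~\ref{def:dynamicalregion}\ref{item:dandelion} is sound: a boundary vertex is stable in the module and in its complement but unstable in $G$, which forces the in/out dichotomy; iterating over a second module containing the vertex pins down uniqueness. Your handling of acyclicity via condition~\ref{item:cicle} (a coherently oriented cycle through any edge of a module must lie entirely in that module, hence entirely in a single $H_i$) is also correct, as is the observation that a non-coherently oriented cycle would already violate the degree bound. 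In short, your argument is valid and amounts to reproving the cited result from \cite{terzo}.
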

Note that, in particular, the path poset of $G$ is isomorphic to the path poset of $G_1 \sqcup  \dots \sqcup G_k$.
 \begin{corollary}\label{thm:modulesmatroid} Let $G$ be a connected MP-digraph and $G_1 , \dots, G_k$ denote its dynamical modules.  Then, we have 
$M_G=M_{G_1} \oplus \dots \oplus M_{G_k}$.
\end{corollary}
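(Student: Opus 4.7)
The plan is to reduce the statement to a direct application of Theorem~\ref{thm:modules} together with Remark~\ref{rmk:circ} and Definition~\ref{defn:summat}. First, I would observe that the ground sets agree: each edge of $G$ belongs to a unique dynamical module (as recorded in the discussion following Definition~4.4 in the excerpt, citing \cite[Lemma~4.10]{terzo}), so $E(G)=E(G_1)\sqcup\cdots\sqcup E(G_k)$ as a disjoint union.

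Next, I would unpack what Theorem~\ref{thm:modules} actually says set-theoretically about multipaths. The product decomposition $P_G\cong P_{G_1}\times\cdots\times P_{G_k}$ asserts that every multipath $m$ of $G$ is uniquely determined by, and can be recovered as, the disjoint union $m_1\sqcup\cdots\sqcup m_k$ of multipaths $m_i\in\Mult(G_i)$ (where $m_i$ is the spanning subgraph of $G_i$ consisting of those edges of $m$ lying in $G_i$, completed with the remaining vertices of $G_i$). Conversely, any such disjoint union is a multipath of $G$, because each dynamical module contains all edges of any simple path of $G$ passing through it (this is the content of Item~\ref{item:cicle} in Definition~\ref{def:dynamicalregion} and the uniqueness of the module containing a given edge). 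In symbols, under the identification $E(G)=\bigsqcup_i E(G_i)$, we have
\begin{equation*}
\Mult(G)\;=\;\{\,m_1\sqcup\cdots\sqcup m_k\mid m_i\in\Mult(G_i)\,\}.
\end{equation*}

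Then, since $G$ is an MP-digraph and each $G_i$ is a subgraph of $G$, Remark~\ref{rmk:circ} guarantees that each $G_i$ is itself an MP-digraph, so that the multipath matroids $M_{G_i}=(E(G_i),\Mult(G_i))$ are well defined. Comparing the formula above with Definition~\ref{defn:summat}, we see that $\Mult(G)$ is precisely the family of independent sets of $M_{G_1}\oplus\cdots\oplus M_{G_k}$, which yields the desired identity
\begin{equation*}
M_G=M_{G_1}\oplus\cdots\oplus M_{G_k}.
\end{equation*}

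The only genuinely non-trivial point is the set-theoretic translation of Theorem~\ref{thm:modules} in the second step; everything else is bookkeeping. If one preferred a more self-contained argument, one could bypass Theorem~\ref{thm:modules} by verifying directly that a disjoint union of multipaths of the $G_i$'s is a multipath of $G$ (clear, since dynamical modules share only boundary vertices, which are stable in each module) and that conversely any multipath of $G$ restricts to a multipath in each $G_i$ (using Item~\ref{item:dandelion} of Definition~\ref{def:dynamicalregion} to ensure no simple path of $G$ crosses a module boundary nontrivially). I expect this boundary analysis to be the main, but still mild, obstacle.
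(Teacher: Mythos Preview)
Your proposal is correct and follows essentially the same route as the paper: partition the edge set via the module decomposition, invoke Theorem~\ref{thm:modules} to identify $\Mult(G)$ with the set of tuples $(m_1,\dots,m_k)\in\prod_i\Mult(G_i)$, use Remark~\ref{rmk:circ} to ensure each $M_{G_i}$ is a matroid, and conclude via Definition~\ref{defn:summat}. One small quibble: in your parenthetical justification that a simple path of $G$ stays within a single module, you cite Item~\ref{item:cicle}, which concerns oriented cycles; the correct reference is Item~\ref{item:dandelion} (boundary vertices are stable in each module, so no simple path can cross a boundary), as you in fact note later in your alternative sketch.
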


\proof By definition of dynamical region, we have that $E(G)= E(G_1) \sqcup \dots \sqcup E(G_k)$. Because of Theorem~\ref{thm:modules}, we also have that $\Mult(G)=\Mult(G_1) \sqcup \dots \sqcup \Mult(G_k)$. Then, the statement follows in view of Definition~\ref{defn:summat} and  Remark~\ref{rmk:circ}. \endproof

In the next theorem we analyse better what are the possible dynamical regions appearing in a digraph that satisfies the conditions of Theorem~\ref{thm:boom}.

\begin{proposition}\label{prop:modules}
Let $G$ be a connected loopless MP-digraph that is not a coherently oriented cycle. Suppose that $G$  has a unique dynamical module. Then $G$ is a sink or a source. 
\end{proposition}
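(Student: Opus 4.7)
The plan is to argue by contradiction: assuming $G$ is neither a sink nor a source, I will construct a proper dynamical sub-region of $G$, violating the uniqueness of the dynamical module. Two preliminaries make this clean. First, by Condition~\ref{MP2} together with the connectivity of $G$, the graph contains no coherently oriented cycle at all---such a cycle would have to be a connected component and hence all of $G$, contradicting the hypothesis. In particular, Condition~\ref{item:cicle} of Definition~\ref{def:dynamicalregion} is vacuously satisfied by every subgraph of $G$. Second, from Condition~\ref{MP1} I extract the following local lemma: if $x$ has two distinct outgoing edges $(x,y_1)$ and $(x,y_2)$, then $(x,y_i)$ is the unique incoming edge at $y_i$ for each $i$; otherwise a further in-neighbor $z$ of some $y_i$ produces either the triangle of Figure~\ref{subfig:avoidingA} (when $z$ is the other $y_j$) or the pattern of Figure~\ref{subfig:avoidingB} (when $z$ is a fresh vertex). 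The dual statement holds by reversing orientations.

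With these in hand, the main argument splits into three cases. If some vertex $v$ has at least two outgoing edges, I take $R = E^+(v)$; the lemma above shows that the potential boundary vertices of $R$ are $v$ itself (when $v$ has incoming edges) and those out-neighbors of $v$ that possess further outgoing edges in $G$, and each such vertex is immediately seen to be unstable in $G$ and stable in both $R$ and $C_G(R)$, so that $R$ is a dynamical region. Uniqueness then forces $R = E(G)$, which makes every edge of $G$ issue from $v$ and hence $G$ a source. The symmetric argument handles a vertex with at least two incoming edges, giving that $G$ is a sink. If neither such vertex exists, every vertex of $G$ has both in-degree and out-degree at most one, and connectedness, loopless-ness, and the absence of coherent cycles force $G \cong I_n$ for some $n \geq 1$: for $n = 1$ this is both a source and a sink, while for $n \geq 2$ the subgraph $R = \{(v_0,v_1)\}$ is a proper dynamical region (its unique boundary vertex $v_1$ is unstable in $G$ and stable in both $R$ and $C_G(R)$), once again contradicting uniqueness.

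The main obstacle is the careful verification in the first case that $R = E^+(v)$ satisfies Condition~\ref{item:dandelion}, which amounts to a systematic stability check at each potential boundary vertex; once the ``unique incoming edge'' lemma extracted from Condition~\ref{MP1} is in place, however, this reduces to straightforward bookkeeping. The real combinatorial content of the proof is therefore the passage from the forbidden subgraphs of Figure~\ref{fig:avoiding} to the tight constraint on neighborhoods.
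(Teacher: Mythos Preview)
Your proof is correct and follows essentially the same strategy as the paper's: isolate the star of out-edges (the paper uses in-edges, the dual picture) at a vertex of high degree, use Condition~\ref{MP1} to verify it is a dynamical region, and invoke uniqueness of the module to conclude it exhausts $G$. Your case split on in-/out-degree rather than total valence, together with the explicit ``unique incoming edge'' lemma and the direct treatment of the residual $I_n$ case, is a mild repackaging of the paper's argument rather than a different route.
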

\proof 
Since $G$ is connected,  by Condition~\ref{MP2} in Theorem~\ref{thm:boom} it cannot contain coherently oriented cycles.

Assume $E(G) > 2$, and choose a vertex $v \in V(G)$, the case $E(G)\leq 2$ being straightforward.
Suppose first that the valence of every vertex of $G$ is smaller than or equal to $2$. Then, the undirected graph underlying $G$ is a linear graph or a cycle.
If the underlying graph is a cycle, then it has a unique dynamical region if it is alternating or coherently oriented. Therefore $G$ must be alternating.  This yields a contradiction, since it does not contain the digraphs in Figure~\ref{fig:avoiding}. It follows that $G$ is a linear graph. The unique linear graphs satisfying both  conditions~\ref{MP1} and \ref{MP2}, and having a unique dynamical module, are the linear sources  and the linear sinks. Proving the statement in this case.

Therefore, we can now assume that there is a vertex $v$ of valence greater than $2$.
Denote by~$I(v)$ the set of incoming edges  at $v$. Since the valence of $v$ is greater than $2$, up to edge reversing, we can assume $|I(v)|  \geq 2$. Let $S$ be the connected subgraph of $G$ such that $E(S)=I(v)$. Observe that $S$ is a sink over $|I(v)|+1 $ vertices. In particular, $v$ is the unique stable inner vertex of~$S$.
We want to prove that $S$ is a dynamical region of $G$ and consequently $G=S$.
Since $G$ does not contain coherently oriented cycles, it is enough to prove that every vertex on the boundary of $S$ is unstable in $G$. 
Equivalently, if $w$ is a vertex of $S$ which is not $v$, then the edges in $E(G)\setminus E(S)$ incident to $w$ are all incoming.
If this was not the case, then $G$ would contain one of the graphs in Figure~\ref{fig:avoiding}, yielding a contradiction. This concludes the proof.
\endproof

We have some direct consequences:

\begin{corollary}\label{cor:modules}Let $G$ be a connected loopless MP-digraph that is not a coherently oriented cycle. Suppose that $G$  has a unique dinamical region. Then, the multipath matroid $M_G$ is isomorphic to the uniform matroid $U_{1, |E(G)|}.$
\end{corollary}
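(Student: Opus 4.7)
The plan is to reduce the statement directly to Proposition~\ref{prop:modules} together with the computation of the multipath matroid of a sink (Example~\ref{ex:sink}).

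First I would reconcile the terminology between the corollary (``unique dynamical region'') and the proposition (``unique dynamical module''). A dynamical module is, by definition, a minimal dynamical region; consequently, if $G$ admits exactly one dynamical region $R$, then $R$ is automatically minimal among dynamical regions and hence is a module. Moreover, by \cite[Lemma~4.10]{terzo} every edge of $G$ lies in some dynamical module, so $E(R) = E(G)$ and $R = G$ as spanning subgraphs. Thus the hypotheses of Proposition~\ref{prop:modules} are satisfied, and we conclude that $G$ is either a sink or a source.

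Next, I would apply Example~\ref{ex:sink}, which states that the multipath matroid of a sink on $n+1$ vertices is isomorphic to the uniform matroid $U_{1,n}$. For the source case, I would invoke the observation made just after Theorem~\ref{thm:boom} that edge-reversing a digraph yields an isomorphic multipath matroid; since reversing all edges of a source produces a sink on the same vertex set, the same identification $M_G \cong U_{1,|E(G)|}$ follows. In either case, the number of edges is $|E(G)|$, which gives the claimed isomorphism.

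There is no substantial obstacle: the entire content of the corollary is packaged in Proposition~\ref{prop:modules}, and the matroid computation is immediate from the earlier examples. The only mild subtlety is the ``region'' versus ``module'' translation, which I would address by explicitly noting that uniqueness of a region forces it to be a module filling all of $G$.
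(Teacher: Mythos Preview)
Your proposal is correct and matches the paper's intended argument: the paper gives no explicit proof for this corollary, treating it as immediate from Proposition~\ref{prop:modules} together with Example~\ref{ex:sink} (and the edge-reversal remark for sources). Your extra care in reconciling ``unique dynamical region'' with ``unique dynamical module'' is a reasonable clarification of what is almost certainly a terminological slip in the paper, since Theorem~\ref{thm:uniformprduct} later applies this corollary to dynamical modules.
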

\begin{corollary}\label{cor:dinregions} Let $G$ be a connected loopless MP-digraph  that is not a coherently oriented cycle. Then, its dynamical modules are sinks, sources or linear graphs of length~1.
\end{corollary}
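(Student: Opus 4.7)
The plan is to combine Proposition~\ref{prop:modules} with a mild ``inheritance'' argument, showing that each dynamical module of $G$ satisfies the hypotheses of Proposition~\ref{prop:modules} when considered as a self-contained digraph.

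Let $M$ be a dynamical module of $G$. First I would verify that $M$ is a connected, loopless MP-digraph: connectedness is part of Definition~\ref{def:dynamicalregion}, looplessness is inherited from $G$, and Conditions~\ref{MP1} and \ref{MP2} restrict to any subgraph. Moreover, $M$ cannot be a coherently oriented cycle: by~\ref{MP2} any such cycle would be a connected component of $G$, and since $G$ itself is connected and not a coherently oriented cycle by hypothesis, we would get $G=M$, contradicting the assumption.

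To invoke Proposition~\ref{prop:modules}, the key step is to show that $M$ has itself as its unique dynamical module; equivalently, that no proper subgraph $H\subsetneq M$ is a dynamical region of $M$. If such an $H$ existed, the plan would be to show that $H$ is also a dynamical region of $G$, contradicting the minimality of $M$ in $G$. Condition~\ref{item:cicle} of Definition~\ref{def:dynamicalregion} is vacuous here, since $G$ has no coherently oriented cycles by~\ref{MP2}; the heart of the argument is condition~\ref{item:dandelion}.

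The case analysis reduces to distinguishing, for each $w \in \partial_G H$, whether $w$ has extra incident edges in $G \setminus M$, in $M \setminus H$, or in both. The ``both'' case is the critical one, and I expect it to be the main obstacle: it must be shown to be impossible, which follows by a clean stability clash, since $w \in \partial_G M$ forces $w$ to be stable in $M$ (by \ref{item:dandelion} for $M$ in $G$), while $w \in \partial_M H$ forces $w$ to be unstable in $M$ (by \ref{item:dandelion} for $H$ in $M$). The remaining two cases are then routine bookkeeping: the incident edges of $w$ in $G$ are confined either to $M$ or to $H$, so the required (in)stability conditions in $G$, $H$ and $C_G(H)$ transfer directly from the corresponding conditions already known for $H$ in $M$ or for $M$ in $G$. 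Once $M$ is established as its own unique module, Proposition~\ref{prop:modules} gives that $M$ is a sink or a source, and a linear graph of length $1$ appears as the common degenerate case with two vertices, yielding the stated classification.
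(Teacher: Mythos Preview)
Your proposal is correct and is essentially the argument the paper leaves implicit: the corollary is stated without proof, as a direct consequence of Proposition~\ref{prop:modules}, and your inheritance argument (showing that a dynamical module $M$ of $G$ is itself a connected loopless MP-digraph, not a coherently oriented cycle, with $M$ as its own unique dynamical module) is precisely the routine verification needed to feed $M$ back into Proposition~\ref{prop:modules}. The stability clash you identify in the ``both'' case is exactly the right mechanism, and the two remaining cases are indeed straightforward bookkeeping; note also that the ``linear graph of length~$1$'' clause is redundant with ``sink or source'', so Proposition~\ref{prop:modules} already covers it.
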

\begin{theorem}\label{thm:uniformprduct}
Let $G$ be an MP-digraph. Then, its multipath matroid $M_G$ is the direct sum of uniform matroids.
\end{theorem}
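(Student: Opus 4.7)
The plan is to combine all the structural decomposition results obtained earlier in the section, working from the outside in until every factor is a uniform matroid.

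First, I would invoke Theorem~\ref{thm:components} to reduce to the case of $G$ connected: if $G_1,\dots,G_k$ are the connected components of $G$, then $M_G \cong M_{G_1}\oplus\cdots\oplus M_{G_k}$, and each $G_i$ is again an MP-digraph, so it suffices to show that the matroid of a connected MP-digraph is a direct sum of uniform ones.

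Second, I would strip off the loops. By Remark~\ref{rmk:looped}, if $G$ has $k$ loops and $\widetilde{G}$ denotes the digraph obtained by removing them, then
\begin{equation*}
M_G \;\cong\; M_{\widetilde{G}} \,\oplus\, U_{0,1}^{\oplus k}\ ,
\end{equation*}
so the loop contributions are already uniform summands. It remains to handle the connected \emph{loopless} MP-digraph $\widetilde{G}$.

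Third, I would distinguish two cases for $\widetilde{G}$ according to Condition~\ref{MP2}. If $\widetilde{G}$ contains a coherently oriented cycle, then by \ref{MP2} that cycle is a connected component; since $\widetilde{G}$ is connected, $\widetilde{G}$ is itself a coherently oriented cycle, and Example~\ref{ex:cycle} identifies $M_{\widetilde{G}}$ with the uniform matroid $U_{n-1,n}$. If instead $\widetilde{G}$ contains no coherently oriented cycle, I would apply Corollary~\ref{thm:modulesmatroid} to decompose $M_{\widetilde{G}}$ as the direct sum of the multipath matroids of the dynamical modules of $\widetilde{G}$. By Corollary~\ref{cor:dinregions}, each such module is either a sink, a source, or a linear graph of length~$1$; in every one of these cases Corollary~\ref{cor:modules} (or the trivial observation that $M$ of a single edge is $U_{1,1}$) gives a uniform matroid.

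Assembling the three decompositions produces a presentation of $M_G$ as an explicit direct sum of uniform matroids. I do not foresee a serious obstacle: the whole argument is an orchestration of results proved just above. The only point that requires a small case-check is the coherently oriented cycle, which would otherwise fall outside the scope of Corollary~\ref{cor:dinregions} and Proposition~\ref{prop:modules}; handling it separately via Example~\ref{ex:cycle} before invoking the module decomposition is the single thing one needs to remember.
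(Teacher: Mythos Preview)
Your proposal is correct and follows essentially the same route as the paper's proof: reduce to connected loopless MP-digraphs via Theorem~\ref{thm:components} and Remark~\ref{rmk:looped}, handle the coherently oriented cycle separately via Example~\ref{ex:cycle}, and otherwise decompose into dynamical modules (Corollary~\ref{thm:modulesmatroid}) whose matroids are uniform by Corollaries~\ref{cor:modules} and~\ref{cor:dinregions}. Your write-up is in fact slightly more explicit than the paper's in justifying why the cycle case must be treated apart and in citing Corollary~\ref{thm:modulesmatroid} rather than Theorem~\ref{thm:modules} for the matroid splitting.
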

\proof Using Theorem~\ref{thm:components} and Remark~\ref{rmk:looped}, we can reduce to the case of $G$ connected without loops. If $G$ is a cycle, the assertion is Example~\ref{ex:cycle}. Otherwise, we can consider the decomposition of $G$ into dynamical regions. In view of Corollary~\ref{cor:modules} and of Corollary~\ref{cor:dinregions}, the multipath matroid of every dynamical module of $G$ is a uniform matroid. The statement now follows by Theorem~\ref{thm:modules}.
\endproof

A direct consequence of Theorem~\ref{thm:uniformprduct} is that the family of multipath matroids is self-dual -- as this holds true for the family of uniform matroids. Furthermore, we have proved that multipath matroids are graphic:

\begin{corollary} Every multipath matroid is regular and graphic. 
\end{corollary}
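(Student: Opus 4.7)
The plan is to reduce, via Theorem~\ref{thm:uniformprduct}, to the specific uniform matroids that actually arise as direct summands of multipath matroids, realise each such summand as the graphic matroid of an explicit (multi)graph, and then close under direct sums.

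First, I would trace through the proofs of Theorem~\ref{thm:uniformprduct}, Corollary~\ref{cor:modules}, and the examples in Section~4 to identify exactly which uniform matroids $U_{k,n}$ appear. From Remark~\ref{rmk:looped} we get $U_{0,1}$ summands, one for each loop. Corollary~\ref{cor:modules}, together with Examples~\ref{ex:cycle} and \ref{ex:sink}, shows that the remaining dynamical modules contribute either $U_{n-1,n}$ (from coherently oriented cycles of length $n$) or $U_{1,n}$ (from sink- or source-modules on $n$ edges), with the length-one modules contributing $U_{1,1}$. Pinning down this list explicitly is essential, because the class of graphic matroids is \emph{not} closed under taking arbitrary uniform matroids -- for instance $U_{2,4}$ is famously non-graphic -- so a lazy invocation of Theorem~\ref{thm:uniformprduct} will not suffice.

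Second, for each uniform matroid in the list above I would exhibit an undirected (multi)graph realising it. Namely, $U_{0,k}$ is the graphic matroid of a single vertex carrying $k$ loops; $U_{1,n}$ is the graphic matroid of two vertices joined by $n$ parallel edges, since any single edge is a subforest while any two form a cycle; $U_{n-1,n}$ is the graphic matroid of an undirected $n$-cycle; and $U_{n,n}$ (which includes $U_{1,1}$) is the graphic matroid of any tree on $n+1$ vertices, as already recalled in Example~\ref{es:matrees}.

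Finally, I would invoke the elementary fact that graphic matroids are closed under direct sums -- the underlying graph of $M_1\oplus M_2$ being simply the disjoint union of graphs underlying $M_1$ and $M_2$ -- and apply it inductively to the decomposition of $M_G$ provided by Theorem~\ref{thm:uniformprduct}. This yields that $M_G$ is graphic, and regularity is then automatic from the fact, recalled at the end of Section~2, that every graphic matroid is regular. There is no serious obstacle beyond the bookkeeping in the first step: the argument is essentially a matter of making Theorem~\ref{thm:uniformprduct} quantitatively precise about which summands occur and then ticking off their graphic realisations one by one.
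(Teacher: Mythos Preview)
Your proposal is correct and follows the same overall strategy as the paper: invoke Theorem~\ref{thm:uniformprduct} to write $M_G$ as a direct sum of uniform matroids, check that each summand is graphic, and close under direct sums (with regularity following from graphicness).

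In fact, your version is more careful than the paper's. The paper's proof asserts flatly that ``every uniform matroid is regular and graphic,'' which is false as stated --- $U_{2,4}$ is not even binary --- so a literal reading of the paper's argument does not go through. What you do, and what the paper should have done, is trace through Corollary~\ref{cor:modules}, Example~\ref{ex:cycle}, and Remark~\ref{rmk:looped} to see that the only uniform summands that actually occur are $U_{0,k}$, $U_{1,n}$, and $U_{n-1,n}$, and then verify that each of \emph{these} is graphic. Your explicit realisations (loops, parallel edges, an $n$-cycle) are exactly right and patch the gap.
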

\proof Every uniform matroid is regular and graphic. Direct sum of regular and graphic matroid is again regular and graphic. The statement now follows from Theorem~\ref{thm:uniformprduct}. 
\endproof

Given a directed graph $G$ that satisfies the conditions of Theorem~\ref{thm:boom}, it is now easy to produce an undirected graph $G'$ such that $M_G$ is isomorphic to the graphic matroid of $G'$. 
In fact, up to considering coherently oriented cycles and loops as distinct connected components of $G$, we can reduce to the case of $G$ connected, loopless and without coherently oriented cycles. Furthermore, because of  Corollary~\ref{thm:modulesmatroid}, it is possible to reduce to the case of $G$ having a single dynamical region. We are in the hypotheses of  Proposition~\ref{prop:modules} and we can apply the classification of dynamical regions. More precisely, if $G$ is a linear graph, its multipath matroid coincides with its graphic one. Otherwise, if $G$ is a source or a sink with $n>1$ edges, then its multipath matroid is isomorphic to the graphic matroid of the (non directed) multigraph with set of vertices $V(G)=\{v, w\}$ and $n $ edges between $v$ and $w$. 
\section{Tutte Polynomials and Digraph colourings}
In this section we focus on the Tutte polynomial of multipath matroids. 
For a given MP-digraph~$G$, the main result of this section relates a specialization of the Tutte polynomial of~$M_G$ to certain colourings of $G$. Before introducing such digraph colourings, we investigate the properties of deletions and contractions of multipath matroids.

\subsection{Deletion-Contraction of Multipath Matroids} Let $G$ be a digraph and consider an edge $e=(v,w)$, $v \neq w$. We want to realize the deletion matroid $M_G \setminus e$ and the contraction $M_G/ e$ as multipath matroids of suitable digraphs. 

Recall that the digraph deletion 
$G \setminus e$ is the digraph obtained from $G$ by deleting the edge~$e$;  to be more precise, $G \setminus e$ is the digraph defined by the data $V(G \setminus e)=V(G)$ and $E(G \setminus e)=E(G) \setminus \{e\}$. We now need a slightly different notion of contraction of digraphs; which we call MP-contraction:

\begin{defn}[MP-Contraction Digraph]\label{defn:contractiongraphs} The digraph $G \mpslash e$ obtained contracting the edge $e = (v,w)$, with $v\neq w$, is the digraph defined as follows:
\begin{enumerate}[label={(CG\arabic*)}]
\item the set of vertices of $G \mpslash e $ is $V(G) \setminus \{v, w\}  $  with an additional vertex $x$;
\item if $(a,b) \in E(G)$ with $a, b  \in V(G) \setminus \{v, w\}  $ then $(a,b) \in E(G \mpslash e)$;
\item an edge of the form $(a, v)$ (resp.~$(w,b)$) of $G$ is replaced by an edge $(a, x)$ (resp.~$(x,b)$ in $G \mpslash e$;
\item Every edge of the form $(v, a)$, $a \neq w$,  or of the form $(b,w)$, $b \neq v$, in $G$ is replaced by a loop $L_{(v, a)}$ (resp.~$L_{(b,w)}$) incident to $x$ in~$G \mpslash e$.
\end{enumerate}
\end{defn}
 As an example, in Figure~\ref{fig:ALContracted} it is illustrated the graph obtained contracting the edge~$(v_1,v_0)$ of $A_3$, depicted in red in Figure~\ref{fig:AL}. Note also that the possible loops at $v$ or $w$, in Definition~\ref{defn:contractiongraphs}, are all sent to loops at $x$.
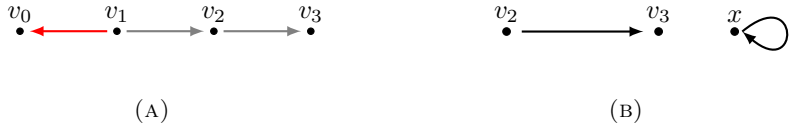
\begin{figure}[H]
	\centering
	\begin{subfigure}[b]{0.3\textwidth}
		\centering
	\begin{tikzpicture} [baseline=(current bounding box.center),scale =.85]
		\tikzstyle{point}=[circle,thick,draw=black,fill=black,inner sep=0pt,minimum width=2pt,minimum height=2pt]
		\tikzstyle{arc}=[shorten >= 8pt,shorten <= 8pt,->, thick]
		
		\node[above] (v0) at (0,0) {$v_0$};
		\draw[fill] (0,0)  circle (.05);
		\node[above] (v1) at (1.5,0) {$v_1$};
		\draw[fill] (1.5,0)  circle (.05);
		\node[above] (v2) at (3,0) {$v_{2}$};
		\draw[fill] (3,0)  circle (.05);
		\node[above] (v3) at (4.5,0) {$v_{3}$};
		\draw[fill] (4.5,0)  circle (.05);
		
		\draw[thick, red, -latex] (1.35,0) -- (0.15,0);
		\draw[thick, gray,  -latex] (1.65,0) -- (2.85,0);
		\draw[thick, gray,  -latex] (3.15,0) -- (4.35,0);
  \node at (0,-.5) {\phantom{A}};
	\end{tikzpicture}
		\caption{\phantom{A}}\label{fig:AL}
	\end{subfigure}\hspace{5em}
	\begin{subfigure}[b]{0.3\textwidth}
	\centering
	\begin{tikzpicture} [baseline=(current bounding box.center),scale =2]
		\tikzstyle{point}=[circle,thick,draw=black,fill=black,inner sep=0pt,minimum width=2pt,minimum height=2pt]
		\tikzstyle{arc}=[shorten >= 8pt,shorten <= 8pt,->, thick]
		
		 \node[above] (v0) at (3,0) {$x$};
		 \draw[fill] (3,0)  circle (.025);
		\node[above] (v2) at (1.5,0) {$v_2$};
		\draw[fill] (1.5,0)  circle (.025);
		\node[above] (v3) at (2.5,0) {$v_{3}$};
	\draw[fill] (2.5,0)  circle (.025);

		\draw[thick, -latex]  (3.05,0) to[in=320, out=40, loop] (3.05,0) ;
		\draw[thick, -latex]  (1.60,0.) -- (2.40,0) ;
	\end{tikzpicture}
				\caption{ \phantom{C }}\label{fig:ALContracted}
\end{subfigure}
	\caption{A linear graph (A) and its MP-contraction (B).}
	\label{fig:T}
\end{figure}

\begin{rmk}\label{rmk:delcontr}
Observe that if $G$ is a MP-digraph, then also $G \setminus e$ and $G \mpslash e$ are MP-digraphs, for each $e$ which is not a loop.
\end{rmk}

Our goal is to prove that the multipath matroid associated to the deletion/MP-contraction of a MP-digraph is the deletion/contraction of the corresponding multipath matroid. 
Let $e=(v,w)$ be an edge of $G$  that is not a loop. We denote by $\SubGraph_e (G) $ and $ \Mult_e (G)$ the sets of subgraphs and  multipaths, respectively, of $G$  that contain $e$. Consider the graph $\widetilde{\mathcal{C}}_e({G})$ obtained from $G\mpslash e$ by deleting all loops at the newly added vertex $x$. 
Observe that there is a map $$\widetilde{\mathcal{C}}_e\colon \SubGraph_e(G) \rightarrow \SubGraph\left(\widetilde{\mathcal{C}}_e({G})\right)$$
which sends each subgraph $H$ containing $e$ to $\widetilde{\mathcal{C}}_e({H})$.  
This map restricts to a map $$\mathcal{C}_e\colon \Mult_e (G) \rightarrow \Mult(\widetilde{\mathcal{C}}_e({G}))\ .$$
\begin{lemma}\label{lem:multcontraction} The map $\mathcal{C}_e$ induces a poset isomorphism between $ \Mult_e (G)$ and $\Mult(\widetilde{\mathcal{C}}_e({G}))$.
\end{lemma}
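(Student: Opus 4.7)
The strategy is to construct an explicit inverse map $\mathcal{L}_e\colon \Mult(\widetilde{\mathcal{C}}_e(G))\to \Mult_e(G)$ by ``un-contracting'' along $e$. First I would verify that $\mathcal{C}_e$ lands in $\Mult(\widetilde{\mathcal{C}}_e(G))$. The key observation is the following: if $H\in \Mult_e(G)$, then in $H$ the vertex $v$ already has $e=(v,w)$ as outgoing edge, so another outgoing edge $(v,a)$ with $a\neq w$ would make $v$ a branching point of its connected component, contradicting the fact that said component is a simple path; and an incoming edge $(b,w)$ with $b\neq v$ is excluded at $w$ for the same reason. Therefore none of the edges that become loops at $x$ in $G\mpslash e$ belong to $H$, so $\mathcal{C}_e(H)$ is a subgraph of $\widetilde{\mathcal{C}}_e(G)$. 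The component of $H$ containing $e$ is a simple path $\ldots\to u\to v\xrightarrow{e} w\to z\to\ldots$ (possibly truncated at either end), and contracting $e$ merges this into the shorter simple path $\ldots\to u\to x\to z\to\ldots$ through $x$; all other components of $H$ remain unchanged. Hence $\mathcal{C}_e(H)\in \Mult(\widetilde{\mathcal{C}}_e(G))$.

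Next I would define $\mathcal{L}_e$. Given $M\in \Mult(\widetilde{\mathcal{C}}_e(G))$, replace the vertex $x$ of $M$ by the two vertices $v,w$, add back the edge $e=(v,w)$, and send each edge incident to $x$ in $M$ to its unique pre-image in $G$: namely $(a,x)\mapsto (a,v)$ and $(x,b)\mapsto (w,b)$. The resulting spanning subgraph $\mathcal{L}_e(M)$ of $G$ trivially contains $e$. To check that $\mathcal{L}_e(M)\in \Mult_e(G)$, note that in the multipath $M$ the vertex $x$ has in-degree plus out-degree at most two, and the connected component of $x$ in $M$ is either $\{x\}$ or a simple path whose local configuration near $x$ is of the form $\ldots\to a\to x\to b\to\ldots$ (where one of the two halves might be absent). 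Lifting then produces the simple path $\ldots\to a\to v\xrightarrow{e} w\to b\to\ldots$; it is simple because $v,w$ do not occur anywhere else in $M$ (they were collapsed into the single vertex $x$, which appears once), and no cycle is created since $M$ has no coherently oriented cycle through $x$. All other components of $M$ are transported unchanged. Hence $\mathcal{L}_e(M)$ is a disjoint union of simple paths and isolated vertices containing $e$.

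The equalities $\mathcal{C}_e\circ \mathcal{L}_e=\mathrm{id}$ and $\mathcal{L}_e\circ \mathcal{C}_e=\mathrm{id}$ are then immediate from the constructions, since both operations act as identity away from the pair $\{v,w\}$ (resp.~$\{x\}$) and are mutually inverse on the local configuration at the contracted/uncontracted vertex. Finally, both $\mathcal{C}_e$ and $\mathcal{L}_e$ are inclusion preserving by construction (contracting an edge preserves containment of spanning subgraphs, and the lift is defined edge-wise), so the bijection is a poset isomorphism.

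The main technical point is the one addressed in the second paragraph: one must verify that lifting along $e$ never introduces a branching vertex at $v$ or $w$ and never closes a coherently oriented cycle. This is where conditions~\ref{MP1} and \ref{MP2}, together with the fact that $x$ appears exactly once as a vertex of $M$, do the work; the first paragraph's observation about the absence of the ``loop edges'' at $v,w$ in any $H\in \Mult_e(G)$ is the dual check needed to make $\mathcal{C}_e$ well-defined.
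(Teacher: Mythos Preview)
Your proof is correct and follows the same route as the paper: the paper argues surjectivity of $\mathcal{C}_e$ by exhibiting, for each $m'\in\Mult(\widetilde{\mathcal{C}}_e(G))$, a preimage obtained by adjoining $e$ and lifting the edges at $x$ back to $v$ and $w$, and then rules out a cycle in the lift exactly as you do (a cycle through $e$ would contract to a cycle through $x$ in $m'$). One minor correction: your closing remark that conditions~\ref{MP1} and~\ref{MP2} ``do the work'' is misleading --- the lemma is stated and proved for an arbitrary digraph, and neither the paper's argument nor the body of yours actually invokes them; the absence of branching at $v$ and $w$ in the lift follows solely from $x$ having in- and out-degree at most $1$ in the multipath $M$, together with the fact that the would-be extra outgoing edges at $v$ (resp.\ incoming edges at $w$) become loops in $G\mpslash e$ and are therefore absent from $\widetilde{\mathcal{C}}_e(G)$.
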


\proof
By construction of $\widetilde{\mathcal{C}}_e({G})$ it is clear that $\mathcal{C}_e$ is injective on $\Mult_e (G)$ and that it preserves inclusions of multipaths. We only need to prove that it is surjective. To this end, consider a multipath~$m' \in \Mult(\widetilde{\mathcal{C}}_e({G}))$, we shall show that $m'$ is the image of a multipath $m$.~\\%
\indent We start by assuming that no edge of~$m'$ is incident to $x$. Then, we can identify the edges of~$m'$ (if any) with the corresponding edges in $G$. In light of this identification, by setting~$m= m' \cup \{e\} $, it is immediate that $m\in \Mult_e (G)$ and that $m'= \mc{C}_e (m)$.~\\%
\indent We may therefore assume that there exists at least an edge $e' \in E(m')$ incident to $x$.
Since~$m'$ is a multipath, there is at most one other edge $e'' \in E(m')\setminus \{ e'\}$ incident to $x$ and the edges $e'$ and $e'' $ form a multipath in $\widetilde{\mathcal{C}}_e({G})$.
We shall assume that $e''$ exists, the same proof works also in the case there is no $e''$.
Removing the $e'$ and $e''$, from $m'$ yields a multipath $\overline{m}'$ in $\widetilde{\mathcal{C}}_e({G})$ with no edges incident to $x$.
Therefore, $\overline{m}'$ is the image $\mathcal{C}_e (\overline{m})$ of the multipath~$\overline{m} = \overline{m}' \cup \{e\}$ in $\Mult_e (G)$. 
Let $e_1\in E(G) \setminus \{ e\}$ be the edge corresponding (under the MP-contraction of $e$) to $e'$, and let $e_2$ be the edge in $G$ corresponding to $e''$. 
Denote by $m$ the subgraph of $G$ obtained by adding to $\overline{m}$ the edges $e_1$ and $e_2$.
Since $m' = \widetilde{\mc{C}}_e(m)$, to conclude it is enough to prove that~$m$ is a multipath in $G$.  
Observe that the edges $e_1$, $e$, and $e_2$ form a multipath in $G$. 
It is also not difficult to see that every vertex in $m$ has valence (in $m$) at most $2$. Thus, $m$ is a multipath if, and only if, it is not a coherently oriented cycle. But if~$m$ were a coherently oriented cycle, then also~$m\mpslash e$ would be a coherently oriented cycle. Which would be absurd since, by construction, we have that $m\mpslash e=\widetilde{\mathcal{C}}_e(m)=m'$ is a multipath in $\widetilde{\mathcal{C}}_e({G})$. This concludes the proof. 
\endproof

\begin{proposition}\label{prop:isodelcontr}
Let $G$ be a MP-digraph and let $M_G$ be the associated multipath matroid. Given an edge $e \in E(G)$ which is not a loop, we have: 
\[ M_{G \setminus e} = M_G \setminus e  \quad\text{and}\quad \quad M_{G \mpslashpedice e} = M_G / e\ .\]
\end{proposition}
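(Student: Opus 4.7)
The plan is to handle the two identities separately, with the deletion case being essentially tautological and the contraction case reducing to Lemma~\ref{lem:multcontraction}. Both $G \setminus e$ and $G \mpslash e$ are MP-digraphs by Remark~\ref{rmk:delcontr}, so $M_{G \setminus e}$ and $M_{G \mpslash e}$ are defined; moreover, since $e$ is not a loop, $\{e\}$ (together with all other vertices as isolated components) is a multipath of $G$, hence $\{e\}$ is independent in $M_G$ and the contraction $M_G/e$ is defined as well.

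For $M_{G\setminus e} = M_G \setminus e$, the vertex sets satisfy $V(G \setminus e) = V(G)$ and the multipath condition is intrinsic to the spanning subgraph. Hence $\Mult(G \setminus e) = \{m \in \Mult(G) : e \notin E(m)\}$, which matches the independent sets of $M_G \setminus e$ on the common ground set $E(G) \setminus \{e\}$.

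For $M_{G \mpslash e} = M_G/e$, Definition~\ref{defn:contractiongraphs} provides a canonical bijection $\phi \colon E(G) \setminus \{e\} \to E(G \mpslash e)$, under which an edge $f$ becomes a loop at the newly created vertex $x$ precisely when $f$ is a loop at $v$ or $w$, or has the form $(v,a)$ with $a \neq w$, or $(b,w)$ with $b \neq v$. The plan is to show that $I \subseteq E(G) \setminus \{e\}$ satisfies $I \cup \{e\} \in \Mult(G)$ if and only if $\phi(I) \in \Mult(G \mpslash e)$, which is exactly the equality of independent sets under~$\phi$. For the forward direction, if $I \cup \{e\}$ is a multipath, then $v$ has out-degree one and $w$ has in-degree one in $I \cup \{e\}$, so $I$ contains no edge of one of the dangerous forms listed above; then $\phi(I)$ is a set of non-loop edges of $G \mpslash e$ that agrees with $\mathcal{C}_e(I \cup \{e\})$ as an edge set, and Lemma~\ref{lem:multcontraction} yields that this is a multipath of $\widetilde{\mathcal{C}}_e(G)$, hence of $G \mpslash e$. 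Conversely, a multipath $\phi(I)$ of $G \mpslash e$ contains no loops and so lies in $\Mult(\widetilde{\mathcal{C}}_e(G))$; Lemma~\ref{lem:multcontraction} exhibits it as $\mathcal{C}_e(m)$ for a unique $m \in \Mult_e(G)$, and comparing edge sets yields $m = I \cup \{e\}$.

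The main obstacle is the bookkeeping around $\phi$: one must identify precisely which edges of $G$ become loops under MP-contraction, and then observe that such edges cannot coexist with $e$ in a multipath of $G$, since they would force $v$ or $w$ to have degree greater than two in $I \cup \{e\}$. Once this is settled, both directions of the key equivalence reduce to a direct application of Lemma~\ref{lem:multcontraction}, and the proposition follows.
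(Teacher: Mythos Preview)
Your proof is correct and follows essentially the same approach as the paper: both handle deletion tautologically, and for contraction both invoke Lemma~\ref{lem:multcontraction} to identify $\Mult_e(G)$ with $\Mult(\widetilde{\mathcal{C}}_e(G)) = \Mult(G\mpslash e)$, checking that this identification is induced by the natural ground-set bijection $E(G)\setminus\{e\} \to E(G\mpslash e)$. Your version spells out the ``dangerous edges'' bookkeeping that the paper summarises as ``a simple check,'' but the argument is the same.
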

\begin{proof}
The definitions of deletion of digraphs and matroids immediately give~$M_{G \setminus e} = M_G \setminus e$. 

We need to prove that $M_{G \mpslashpedice e} = M_G / e$. 
First, we have an identification, which stems directly from the definition of contraction matroid, between the poset of independents of $M_G / e$ with the poset $\Mult_{e}(G)$.
By Lemma~\ref{lem:multcontraction}, $\mathcal{C}_e$ gives a poset isomorphism between $\Mult_{e}(G)$ and $\Mult(\widetilde{\mathcal{C}}_e({G})) $.
The latter can be naturally identified, as poset, with  $\Mult(G\mpslash e)$.
Thus, composition of the above identifications and $\mathcal{C}_e$ gives a poset isomorphism $\psi$ between the independents of $M_G / e$ and~$\Mult(G\mpslash e)$.
A simple check shows that $\psi$ is in fact induced by the map $\Psi \colon E(G)\setminus \{e\} \rightarrow E(G \mpslash e)$ which associates to each edge the corresponding edge in $G\mpslash e$.  
\end{proof}

Comparing Proposition~\ref{prop:isodelcontr} with Theorem~\ref{thm:Tutteprop} immediately implies the following:

\begin{corollary}\label{cor:tuttegraph}
Let $G$ be a MP-digraph and $e$ an edge of $G$.
If $e$ is a loop in $M_G$, then 
\[T_{M_G}(x,y)= yT_{M_{G\setminus e}} (x,y) \ .\]
If $e$ is a coloop of $M_G$, we have 
\[T_{M_G}(x,y)= xT_{M_{G \mpslashpedice e}} (x,y)\ .\]
Finally, we have:
\[T_{M_G}(x,y)= T_{M_{G \setminus e}} (x,y) + T_{M_{G \mpslashpedice e}} (x,y) \ ,\]
if $e$ is neither a loop nor a coloop. 
\end{corollary}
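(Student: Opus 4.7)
The plan is to derive the corollary directly by combining the matroid-level Tutte recursion of Theorem~\ref{thm:Tutteprop} with the two identifications supplied by Proposition~\ref{prop:isodelcontr}. The strategy is a three-case split on the role of $e$ in $M_G$ (loop, coloop, neither), translating each abstract matroid operation on the right-hand side of the recursion back to an operation on the underlying MP-digraph.

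First I would treat the case in which $e$ is neither a loop nor a coloop of $M_G$. By Remark~\ref{rmk:loopsareloops}, $e$ is then not a loop of $G$, so the MP-contraction $G \mpslash e$ is defined, and by Remark~\ref{rmk:delcontr} both $G\setminus e$ and $G\mpslash e$ remain MP-digraphs; in particular their multipath matroids exist. Proposition~\ref{prop:isodelcontr} now delivers the isomorphisms $M_{G\setminus e} \cong M_G\setminus e$ and $M_{G\mpslash e}\cong M_G/e$, and substituting these into the third identity of Theorem~\ref{thm:Tutteprop} yields the claim. The coloop case is entirely analogous: a coloop is by definition not a loop, so Remark~\ref{rmk:loopsareloops} again ensures $e$ is not a loop of $G$, Proposition~\ref{prop:isodelcontr} applies, and the coloop identity $T_{M_G}(x,y)=xT_{M_G/e}(x,y)$ of Theorem~\ref{thm:Tutteprop} becomes the desired formula.

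The only delicate point, and the place where one must be slightly careful rather than simply citing Proposition~\ref{prop:isodelcontr}, is the loop case: Proposition~\ref{prop:isodelcontr} is proved under the standing hypothesis that $e$ is not a loop of $G$, precisely because MP-contraction is not defined when $s(e)=t(e)$. However, if $e$ is a loop in $M_G$ then by Remark~\ref{rmk:loopsareloops} it is a loop of $G$, and the identification $M_{G\setminus e}=M_G\setminus e$ remains valid directly from the definitions: both matroids have ground set $E(G)\setminus\{e\}$, and since no multipath of $G$ contains the loop $e$, we have $\Mult(G\setminus e)=\Mult(G)$ as families of subsets of $E(G)\setminus\{e\}$. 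The loop recursion of Theorem~\ref{thm:Tutteprop} then gives $T_{M_G}(x,y)=yT_{M_G\setminus e}(x,y)=yT_{M_{G\setminus e}}(x,y)$, and notably MP-contraction never enters the statement on the right-hand side, so the hypothesis gap in Proposition~\ref{prop:isodelcontr} causes no trouble.

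In summary, the proof is essentially a bookkeeping exercise: once Proposition~\ref{prop:isodelcontr} is in hand, each case of Theorem~\ref{thm:Tutteprop} transports verbatim to the MP-digraph setting, with the one mild subtlety that the loop case must be verified from the definitions rather than from the proposition. No additional combinatorial input is needed.
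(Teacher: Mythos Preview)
Your proposal is correct and follows the same approach as the paper, which simply states that the corollary follows ``immediately'' by comparing Proposition~\ref{prop:isodelcontr} with Theorem~\ref{thm:Tutteprop}. Your treatment is in fact more careful than the paper's one-line justification, since you explicitly address the subtlety that Proposition~\ref{prop:isodelcontr} is stated only for non-loop edges and verify the deletion identity directly in the loop case.
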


The above corollary will be instrumental in linking the Tutte polynomial of multipath matroids with certain colourings of spanning subforests.

\subsection{Digraph colourings} 
In this subsection we introduce a special kind of colouring of directed graphs. Furthermore, we will relate the count of these colourings with an evaluation of the Tutte polynomial of multipath matroids.

\begin{defn}\label{defn:Dcolourings}
A \emph{flowing (vertex) $k$-colouring} for a digraph $G$ is a map $c \colon V(G) \to \{1, \dots , k\}$ such that: 
\begin{itemize}
\item if $(v,w)\in E(G)$ then $c(v) \neq c(w)$; 
\item if $(v, w)$ and $(v', w)$ are edges of $G$, then $c(v)=c(v')$;
\item if $( w,v)$ and $(w,v')$ are edges of $G$, then $c(v)=c(v')$. \end{itemize}
\end{defn}
We denote by $\tau_G(k)$ the number of $k$-colourings of the digraph $G$.

\begin{rmk}\label{rem:colorloop}
If $G$ has a loop, then  $\tau_G(k)=0$ for every $k$, by definition. If $G^{\mathrm{op}}$ is obtained from the digraph~$G$ by edge reversing, we have $\tau_{G}(k)=\tau_{G^{\mathrm{op}}}(k)$.
\end{rmk}
Observe that the flowing $k$-colourings of the coherently oriented cycle $C_n$ are the usual $k$-colourings of the underlying undirected graph $C_n^{\mathrm{ud}}$. The multipath matroid of $C_n$ is isomorphic to the graphic matroid of $C_n^{\mathrm{ud}}$, and consequenlty we have
\[k T_{M_{C_n}}(1-k, 0) =(-1)^{n-1} \chi_{C_n^{\mathrm{ud}}}(k) = (-1)^{n-1}\tau_{C_n}(k) \ , \]
where $\chi$ denotes the chromatic polynomial.
On the other hand, if  $A_n$ is the alternating graph, then we have $\tau_{A_n}(k) = k(k-1)$ whereas the number of $k$-colourings of the underlying unoriented graph is $\chi_{A_n^{\mathrm{ud}}}(k) = k(k-1)^{n-1}$.
While any undirected graph admits a $k$-colouring for $k$ large enough, this is not true for flowing $k$-colourings; as an illustrative example consider the digraph in Figure~\ref{fig:excolorazioni}, it is not difficult to check that it has no flowing $k$-colourings for any $k$. 
\begin{figure}[h]
    \centering
    \begin{tikzpicture}[thick,scale = 1.5]
        \node (a1) at (1,0) {};
        \node (a2) at (2,0) {};
        \node (b1) at (1,1) {};
        \node (b2) at (2,1) {};
        \node (c2) at (3,0.5) {};
        
        \node[below left]  at (1,0) {};
        \node[below right] at (2,0) {};
        \node[above] at (1,1) {};
        \node[above] at (2,1) {};
        
        \draw[fill] (a1) circle (0.03);
        \draw[fill] (b1) circle (0.03);
        \draw[fill] (a2) circle (0.03);
        \draw[fill] (b2) circle (0.03);
        \draw[fill] (c2) circle (0.03);

        \draw[-latex] (a1) -- (a2);
        \draw[-latex] (b1) -- (b2);
        \draw[-latex] (b2) -- (c2);
        \draw[-latex] (b1) -- (a1);
        \draw[-latex] (a2) -- (c2);
        
    \end{tikzpicture}
    \caption{A digraph with no flowing $k$-clolorings.}
    \label{fig:excolorazioni}
\end{figure}
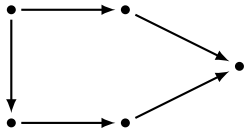

Let $G$ be a digraph, and $v\in V(G)$ a vertex.
Denote by $\od_G(v)$ and $\id_G(v)$ the outdegree and the indegree of $v$, respectively. 

\begin{lemma}\label{lem:treeisth}
Let $G$ be an MP-digraph without coherently oriented cycles. An edge $e=(v,w)$ of~$G$ is a coloop of $M_G$ if and only if  $\od_G(v)\leq 1$ and $\id_G(w)\leq 1$. 
In particular, if $e$ is a coloop, then the MP-contraction $G \mpslash e$ yields  the classical graph contraction $G/e$ of $e$ in~$G$.
\end{lemma}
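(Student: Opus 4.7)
The approach centers on the circuit description of $M_G$ from Corollary~\ref{cor:circ} together with the standard characterization: an element of a matroid is a coloop precisely when it belongs to no circuit. Under the hypothesis that $G$ has no coherently oriented cycles (which, read strictly, also rules out loops), the only circuits of $M_G$ available to witness non-coloopness are two-edge linear sources and two-edge linear sinks, by Proposition~\ref{prop:minimal}.

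For the ``only if'' direction I would argue by contrapositive. If $\od_G(v) \geq 2$, pick a second outgoing edge $e' = (v, w')$ at $v$ with $w' \neq w$; since $G$ has no loops, $w' \neq v$, so $\{e, e'\}$ spans a linear source on the three distinct vertices $\{v, w, w'\}$ and is a circuit containing $e$. The case $\id_G(w) \geq 2$ is symmetric and produces a linear sink through $e$. Either way $e$ fails to be a coloop. Conversely, if $\od_G(v) \leq 1$ and $\id_G(w) \leq 1$, no second edge is available to complete $e$ into a two-edge linear source or sink; combined with the hypothesis that there are no cyclic circuits, $e$ lies in no circuit of $M_G$ and is a coloop.

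For the ``in particular'' part, assume $e$ is a coloop. The above characterization gives that $e$ is the unique edge out of $v$ and the unique edge into $w$. Inspecting Definition~\ref{defn:contractiongraphs}, the loops that would be introduced at the new vertex $x$ in $G \mpslash e$ come from edges of the form $(v,a)$ with $a \neq w$ or $(b,w)$ with $b \neq v$; both sets are empty under our assumptions, so no loops are created. Moreover, because $G$ has no coherently oriented cycles, there is no edge $(w,v) \in E(G)$ that the classical contraction would also turn into a loop. Every remaining edge of $G$ is therefore treated identically by the MP-contraction and by the classical contraction, yielding $G \mpslash e = G/e$.

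The main obstacle I foresee is purely bookkeeping: one must ensure that the ``linear sinks/sources'' appearing as circuits in Corollary~\ref{cor:circ} are really the minimal two-edge configurations (so that a single extra outgoing or incoming edge at $v$ or $w$ suffices to build a circuit through $e$), and one must carefully rule out spurious loops or parallel edges when comparing $G \mpslash e$ with $G/e$.
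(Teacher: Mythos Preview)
Your argument is correct and, for the ``if'' direction, takes a genuinely different and more elementary route than the paper. Both proofs handle the ``only if'' direction in the same way (exhibiting a circuit through $e$ whenever $\od_G(v)\geq 2$ or $\id_G(w)\geq 2$), and the ``in particular'' part is likewise just a definition check in both. The difference is in the converse: the paper invokes an external structural result (Criterion~B from \cite{secondo}) to show that under the degree hypotheses the path poset $P(G)$ is a cone $P(G\setminus e)\times\{0<1\}$, whence every maximal multipath contains $e$. You instead use the standard matroid fact that a coloop is precisely an element lying in no circuit, combined with Corollary~\ref{cor:circ}: since the only circuits are two-edge linear sources/sinks and coherently oriented cycles, and the latter are excluded by hypothesis, the degree bounds immediately rule out any circuit through $e$. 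Your approach is self-contained within the paper and avoids the external reference; the paper's approach, on the other hand, yields the stronger poset isomorphism $P(G)\cong P(G\setminus e)\times\{0<1\}$, which is not needed here but is of independent interest.
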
 
\proof 
Suppose  that $e$ is a coloop of $M_G$.  If $\od(v)$ is greater than~$1$, then there exists an edge $e'=(v, w')$ in $G$ with $w\neq w'$. Observe that any multipath $m$ containing $e'$ cannot contain $e$. 
This violates the condition of $e$ to be contained in every maximal independent set, leading to a contradiction. The case  $\id_G(w)> 1$ is completely analogous.

Suppose now that $\od_G(v)$ and $\id_G(w)$ are both smaller than or equal to $1$. This implies that~$G$ contains the digraph in Figure~\ref{fig:scopettone} as a subgraph, with $n, m \geq 0$. Proceeding as in the proof of \cite[Criterion B]{secondo}, the path poset of $G$ is then a cone over the path poset of $G \setminus e$. In particular 
$P(G) \cong P(G \setminus e) \times \{0 < 1\}$, where the elements of $P(G \setminus e) \times {1}$ can be identified with the multipaths of the form $m \cup e$, with $m \in P(G \setminus e)$. Observe that if $m \in P(G \setminus e) $ then $m \cup e > m $ in $P(G)$, and $r(m \cup e) > r(m) $. This implies that maximal rank elements of $P(G)$ are all contained in the subposet identified with $P(G \setminus e) \times \{1\}$. Since every element of this subposet contains $e$, then $e$ is a coloop.
\endproof

\begin{figure}[H]
    \centering
    \begin{tikzpicture}[thick]
        \node (a1) at (0,0.5) {};
        \node (a2) at (4,0.5) {};
        \node (b1) at (1.25,1) {};
        \node (b2) at (2.75,1) {};
        
        \node (c1) at (0,1.5) {};
        \node (c2) at (4,1.5) {};
        
        \node[below left]  at (0,0.5) {};
        \node[below right] at (4,0.5) {};
        \node[above] at (1.25,1) {};
        \node[above] at (2.75,1) {};
        \node[ ] at (0,1) {\raisebox{.5em}{$\vdots$}};
        \node[ ] at (4,1) {\raisebox{.5em}{$\vdots$}};
        
        \node[above left] at (0,0.75) {$m$};
        \node[above right] at (4,0.75) {$n$};
        
        \draw[fill] (a1) circle (0.03);
        \draw[fill] (b1) circle (0.03);
        \draw[fill] (c1) circle (0.03);
        \draw[fill] (a2) circle (0.03);
        \draw[fill] (b2) circle (0.03);
        \draw[fill] (c2) circle (0.03);

        \draw[-latex](b1) -- (b2);
        \draw[latex-] (a2) -- (b2);
        \draw[latex-] (b1) -- (a1);
        \draw[-latex] (c1) -- (b1);
        \draw[-latex] (b2) -- (c2);
        
    \end{tikzpicture}
    \caption{}
    \label{fig:scopettone}
\end{figure}

Lemma~\ref{lem:treeisth} will be used throughout the rest of this section, its first application is the following;

\begin{lemma}\label{lem:colouringcontraction}
Let $G$ be an MP-tree and suppose that there exists a coloop $e=(v,w)$ of $M_G$. Then, the equality
\begin{equation}\label{eq:colouringcontraction}\tau_G(k) = (k-1) \tau_{G / e}(k)\ ,\end{equation}
holds for each $k\neq 0$.
\end{lemma}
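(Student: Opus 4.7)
The plan is to exploit the coloop hypothesis to split $G$ along $e$ into two independent halves and then use a symmetry under colour permutations to compare the two counts. The main ingredient is that, for a coloop, $e$ has a very constrained local structure.

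First, I would apply Lemma~\ref{lem:treeisth} to the coloop $e=(v,w)$, obtaining $\od_G(v)\le 1$ and $\id_G(w)\le 1$, so that $e$ is the unique outgoing edge from $v$ and the unique incoming edge to $w$; the same lemma gives that the MP-contraction $G\mpslash e$ coincides with the classical graph contraction $G/e$. Since $G$ is a tree, $e$ is a bridge, and $G\setminus e$ therefore splits into two connected MP-trees $G_1\ni v$ and $G_2\ni w$.

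Next, I would establish two structural identifications: a flowing $k$-colouring of $G$ is the same datum as a pair $(c_1,c_2)$ of flowing $k$-colourings of $G_1$ and $G_2$ with $c_1(v)\neq c_2(w)$, while a flowing $k$-colouring of $G/e$ is the same datum as such a pair with $c_1(v)=c_2(w)$. The key technical check is that the flowing constraints at $v$ and at $w$ in $G$ (and at their image $x$ in $G/e$) decompose cleanly. For this I would observe that the source-condition at $v$ in $G$ is vacuous since $e$ is the only outgoing edge at $v$, and symmetrically the target-condition at $w$ is vacuous; hence the only cross-component constraint is the edge inequality $c(v)\neq c(w)$. For $G/e$, the target- and source-conditions at $x$ coincide with those at $v$ in $G_1$ and at $w$ in $G_2$ respectively, while $x$ carries no further edges.

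Finally, for each colour $j\in\{1,\dots,k\}$, let $\tau_{G_i}^{j}(k)$ be the number of flowing $k$-colourings of $G_i$ assigning colour $j$ to the distinguished vertex ($v$ for $i=1$, $w$ for $i=2$). Since permuting the colour set preserves flowing colourings, $\tau_{G_i}^{j}(k)$ is independent of $j$ and equals $\tau_{G_i}(k)/k$. Combining with the identifications above,
\[
\tau_{G/e}(k)=\sum_{j}\tau_{G_1}^{j}(k)\,\tau_{G_2}^{j}(k)=\frac{\tau_{G_1}(k)\,\tau_{G_2}(k)}{k}, \qquad \tau_G(k)=\sum_{j\neq j'}\tau_{G_1}^{j}(k)\,\tau_{G_2}^{j'}(k)=(k-1)\,\frac{\tau_{G_1}(k)\,\tau_{G_2}(k)}{k},
\]
and the conclusion $\tau_G(k)=(k-1)\tau_{G/e}(k)$ follows (the hypothesis $k\neq 0$ enters precisely to legitimise the division by $k$). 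The main obstacle I foresee is the careful bookkeeping in the second step: one must check that neither the source/target conditions nor the edge conditions introduce any unexpected coupling between $G_1$ and $G_2$ beyond the single inequality captured by $e$, and the coloop hypothesis via Lemma~\ref{lem:treeisth} is exactly what makes this clean.
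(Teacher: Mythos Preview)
Your argument is correct and takes a genuinely different route from the paper's. The paper proceeds by induction on $|E(G)|$: it removes a leaf edge $e'=(v',w')$ distinct from $e$ (using Lemma~\ref{lem:treeisth} to ensure $v'\neq v$, so that $w'$ remains a leaf in $G/e$), applies the inductive hypothesis to the smaller tree $\bar G=G\setminus\{w'\}$, and then argues by cases on $\od_G(v')$ to compare $\tau_G$ with $\tau_{\bar G}$ and $\tau_{G/e}$ with $\tau_{\bar G/e}$. Your approach instead splits $G$ along the bridge $e$ into the two halves $G_1\ni v$ and $G_2\ni w$, uses the local consequence of Lemma~\ref{lem:treeisth} ($\od_G(v)=\id_G(w)=1$) to show that the only cross-component flowing constraint is the single inequality $c_1(v)\neq c_2(w)$ (respectively the equality $c_1(v)=c_2(w)$ for $G/e$), and then invokes the $S_k$-action on colours to conclude. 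Your method is shorter, avoids induction, and yields the pleasant byproduct $\tau_G(k)=(k-1)\tau_{G_1}(k)\tau_{G_2}(k)/k$; the paper's leaf-removal induction is more elementary in that it does not need the symmetry averaging, and its technique is reused verbatim in the subsequent proof of Theorem~\ref{thm:colouringtrees}.
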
 
\proof 
We proceed by induction on the number of edges, the case $|E(G)|=1$ being straightforward.
If $|E(G)|=n+1$, consider an edge $e'=(v',w')$, different from $e$, such that at least one between $v'$ and $w'$ is a leaf of $G$. Up to edge reversing, we may assume that $w'$ is such a leaf. 
Observe that, since $e$ is contained in every independent set of maximal cardinality, we have $v' \neq v $ by Lemma~\ref{lem:treeisth}. Consequently, if $w'$ is a leaf for $G$, then it is also a leaf for $G / e$.  
Moreover, in light of Lemma~\ref{lem:treeisth}, we have $\od_G(v')=\od_{G / e}(v')$ and $\id_G(w')=\id_{G / e}(w')$. 
Consider now the graph $\bar{G}$ such that $V(\bar{G})=V(G) \setminus \{w'\}$ and $E(\bar{G})=E(G) \setminus \{e'\}$. Observe that $\bar{G}$ is a tree and $e$ is a coloop of $M_{\bar{G}}$ -- since, by definition of $\bar{G}$, it is contained in every independent of maximal rank.
Then, by induction, we have  $\tau_{\bar{G}}(k) = (k-1) \tau_{\bar{G} / e}(k)$ for each $k\neq 0$.
Note that, if $\od_G(v')=1$, the only condition on the color of $w'$ in both $G$ and $G / e$ is that it must be different from the color of $v'$. Hence, we have
\[\tau_{G }(k) = (k-1)\tau_{\bar{G} }(k) =   
(k-1)^2 \tau_{\bar{G} /  e}(k)= (k-1)\tau_{G / e}(k)\ .  \]
Otherwise, if  $\od_G(v')>1$, by Definition~\ref{defn:Dcolourings} the color of $w'$ in both $G$ and $G / e$ is determined by the color of all other edges of the form $(v',w'')$. As a consequence, a flowing $k$-colouring of $\bar{G}$ and of $\bar{G} / e$ uniquely determines a flowing $k$-colouring of $G$ and of $G / e$, respectively. Therefore, we have
\[\tau_{G }(k) = \tau_{\bar{G}}(k) = (k-1)\tau_{\bar{G} / e}(k) = (k-1) \tau_{G / e}(k) \ , \]
concluding the proof.
\endproof

We are now ready to relate the Tutte polynomial with the numbers of flowing colourings.

\begin{theorem}\label{thm:colouringtrees} Let $G$ be an MP-tree and let $M_G$ be the associated multipath matroid. Then, 
\[k \, T_{M_G}(1-k,0)=(-1)^{r(G)}\tau_G(k) \ ,\]
where $r$ denotes the rank in $M_G$, and $r(G) = r(E(G))$.
\end{theorem}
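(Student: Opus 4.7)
The plan is to prove the following slight generalization by induction on $|E(G)|$: for every MP-forest $G$,
\[ k^{p_0(G)}\, T_{M_G}(1-k,0) = (-1)^{r(G)}\, \tau_G(k), \]
so that the tree case is recovered when $p_0(G)=1$. The generalization is needed because deleting an edge of a tree produces a forest with two components. The base case $|E(G)|=0$ is immediate, since $T_{M_G}=1$, $r(G)=0$, and $\tau_G(k)=k^{p_0(G)}$.

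For the inductive step, pick an edge $e\in E(G)$. If $e$ is a loop, both sides vanish by the loop formula of Corollary~\ref{cor:tuttegraph} (evaluated at $y=0$) and Remark~\ref{rem:colorloop}. If $e$ is a coloop, Lemma~\ref{lem:treeisth} gives $G \mpslash e = G/e$ (classical contraction, still an MP-forest with $p_0(G/e)=p_0(G)$ and $r(G/e)=r(G)-1$), and the result follows by combining the coloop Tutte identity of Corollary~\ref{cor:tuttegraph}, the colouring identity of Lemma~\ref{lem:colouringcontraction}, and the inductive hypothesis on $G/e$, with the sign change $(-1)^{r(G)}=-(-1)^{r(G/e)}$ absorbing the factor $(1-k)=-(k-1)$.

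The substantive case is when $e=(v,w)$ is neither a loop nor a coloop. By Lemma~\ref{lem:treeisth} and edge reversing, we may assume $\od_G(v)\ge 2$. Condition~\ref{MP1} then forces $\id_G(w)=1$, for otherwise the forbidden configuration of Figure~\ref{subfig:avoidingB} would appear. By Definition~\ref{defn:contractiongraphs}, each outgoing edge of $v$ distinct from $e$ becomes a loop in $G\mpslash e$, so $T_{M_{G \mpslash e}}(1-k,0)=0$, and Corollary~\ref{cor:tuttegraph} collapses to $T_{M_G}(1-k,0)=T_{M_{G\setminus e}}(1-k,0)$. Moreover, $r(G\setminus e)=r(G)$ (since $e$ lies in no basis) and $p_0(G\setminus e)=p_0(G)+1$, so the inductive hypothesis on $G\setminus e$ yields $k\cdot k^{p_0(G)} T_{M_G}(1-k,0) = (-1)^{r(G)}\tau_{G\setminus e}(k)$.

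It remains to prove the combinatorial identity $\tau_G(k)=\tau_{G\setminus e}(k)/k$, which together with the previous step closes the induction. Writing $G\setminus e=G_1\sqcup G_2$ with $v\in G_1$ and $w\in G_2$, the source condition at $v$ in any flowing colouring of $G$ forces $c(w)$ to equal the common colour of the remaining out-neighbours $w_2,\dots,w_s$ of $v$ (all of which lie in $G_1$); in particular $c(w)$ is determined by the restriction to $G_1$. On the $G_2$ side, since $\id_G(w)=1$, $w$ has no incoming edges in $G_2$ and is either isolated or the centre of a source-type dynamical module, so by colour permutation the number of flowing colourings of $G_2$ with a prescribed value at $w$ equals $\tau_{G_2}(k)/k$. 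Multiplying gives $\tau_G(k)=\tau_{G_1}(k)\cdot\tau_{G_2}(k)/k=\tau_{G\setminus e}(k)/k$, as desired. The main obstacle is precisely this last identity: both the structural consequence of Condition~\ref{MP1} (which makes $w$ a source-like vertex in $G_2$) and the colour-permutation symmetry are essential, and it is what distinguishes the argument from the undirected analogue.
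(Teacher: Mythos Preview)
Your argument is correct and follows the same inductive skeleton as the paper's proof (split into coloop/non-coloop cases, invoking Lemma~\ref{lem:treeisth}, Lemma~\ref{lem:colouringcontraction}, and Corollary~\ref{cor:tuttegraph}), but the non-coloop step is organised differently. The paper first assumes \emph{no} edge is a coloop and then chooses $e=(v,w)$ with $w$ a \emph{leaf}; this makes the component $G_2=\{w\}$ a single vertex, so the identity $k\,\tau_G(k)=\tau_{G\setminus e}(k)$ is immediate (the isolated $w$ contributes a factor $k$ on the right, while in $G$ the colour of $w$ is forced by the other out-neighbours of $v$). You instead allow $e$ to be an arbitrary non-coloop edge, use Condition~\ref{MP1} to get $\id_G(w)=1$, and then appeal to the $S_k$-symmetry of flowing colourings to show that the number of colourings of $G_2$ with a prescribed value at $w$ is $\tau_{G_2}(k)/k$. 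Both routes are valid; the paper's leaf trick is shorter, while yours makes explicit that the choice of edge is irrelevant and isolates the role of Condition~\ref{MP1} (it guarantees that reattaching $e$ imposes no sink constraint at $w$, so the only coupling between $c_1$ and $c_2$ is $c_2(w)=c_1(w_2)$).

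Two small points worth noting. First, Lemma~\ref{lem:colouringcontraction} is stated only for MP-\emph{trees}; your forest-level induction tacitly uses the obvious componentwise extension, which you might want to mention. Second, the loop case in your inductive step is vacuous for forests and can be dropped. Neither affects correctness.
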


\proof Observe that if $|E(G)|=0,1$, the assertion is true. We now proceed by induction on the number of edges~$|E(G)|$, and use the recursive relations of the Tutte polynomial. 

Suppose first that there exists an edge $e$ of $G$ that is a coloop of $M_G$.
 
Because of Lemma~\ref{lem:colouringcontraction} and using the inductive hypothesis, we have  
\begin{align*} \tau_G(k)&= (k-1)  \tau_{G/e}(k) \\ &= (k-1) (-1)^{r(G/e)} k T_{M_{G/e}}(1-k,0) \\
&=  - (-1)^{r(G/e)} k (1-k) T_{M_{G/e}}(1-k,0)\ .
\end{align*}
Then, the assertion follows in view of Corollary~\ref{cor:tuttegraph}.
Therefore, we can assume that there are no edges of $G$ that are coloops of $M_G$.

Up to reversing the orientation of $G$, we may assume that there is an edge $e=(v,w)$ with the vertex~$w$ a leaf.
By Lemma~\ref{lem:treeisth} there exists an edge $e' \in E(G)$ of the form $(v , w')$ for some $w' \in V(G)\setminus \{ w\}$. By construction, the MP-contraction $G\mpslash e$ has a loop, which we can identify with~$e'$, hence $T_{M_{G \mpslashpedice e}}(x,0)=0$. Therefore, using Corollary~\ref{cor:tuttegraph}, we get $T_{M_G}(x, 0)=T_{M_{G \setminus e}} (x ,0)$.  
Since the digraph~$G$ is a tree, $G \setminus e$ has two connected components $G_1$ and $G_2$, which are again MP-trees. By Theorem~\ref{thm:components} and Equation~\eqref{prop:DSTutte}, we have 
$T_{M_{G \setminus e}}(x, y ) = T_{M_{G_1}}(x,y) \, T_{M_{G_2}}(x,y)$, and using the  inductive hypothesis, we get
\begin{align*}\tau_{G \setminus e}(k) & =\tau_{G_1}(k) \, \tau_{G_2}(k) \\&= (-1)^{r(G_1)} k T_{M_{G_1}}(1-k,0) \, (-1)^{r(G_2)} k T_{M_{G_2}}(1-k,0)\\ &= (-1)^{r(G \setminus e )} k^2 T_{M_{G \setminus e}}(1-k, 0 ) \  .\end{align*} 
Proving the statement is now equivalent to prove that $k\tau_G(k)=\tau_{G \setminus e}(k)$ since, as we observed above, we have~$T_{G}(x,0) = T_{G\setminus e}(x,0)$.

Consider the graph $\bar{G}$ such that $V(\bar{G})=V(G) \setminus \{w\}$ and $E(\bar{G})=E(G) \setminus \{e\}$. By definition we have that  $G \setminus e = \bar{G} \sqcup \{w\}$ and then $k\tau_{\bar{G} }(k)= \tau_{G \setminus e}(k)$. Moreover, since by Definition~\ref{defn:Dcolourings} the color of $w$ in $G$ is determined by the color of $w'$, we have that
$ \tau_{G \setminus e}(k)= k \tau_{\bar{G} }(k) = k \tau_{G }(k) $,
and the theorem is proved. 
\endproof
Observe that, as a consequence of the proof of Theorem~\ref{thm:colouringtrees}, we have that if $G$ is an MP-tree and  $e=(v,w)$ is and edge of $G$ that is not coloop of $M_G$, then  $k \tau_G(k) =  \tau_{G \setminus e}(k)$.
Another straightforward consequence is the following;

\begin{corollary}\label{cor:colouringforests} Let $G$ be an MP-forest and let $M_G$ be the associated multipath matroid. Then, 
\[k^{p_0(G)}T_{M_G}(1-k,0)=(-1)^{r(G)}\tau_G(k)\]
where $p_0(G)$ is the number of connected components of $G$.
\end{corollary}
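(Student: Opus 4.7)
The plan is to reduce Corollary~\ref{cor:colouringforests} to the tree case, Theorem~\ref{thm:colouringtrees}, by decomposing the forest into its connected components and exploiting the multiplicativity of every quantity involved. Concretely, write $G = G_1 \sqcup \cdots \sqcup G_n$ with $n = p_0(G)$ and each $G_i$ a (necessarily MP-) tree.

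First, I would gather the four multiplicativity facts. By Theorem~\ref{thm:components}, $M_G = M_{G_1} \oplus \cdots \oplus M_{G_n}$, hence the rank function splits as $r(G) = \sum_{i=1}^{n} r(G_i)$, and Equation~\eqref{prop:DSTutte} gives
\[ T_{M_G}(x,y) = \prod_{i=1}^n T_{M_{G_i}}(x,y)\ . \]
On the colouring side, since the $G_i$'s share no vertices, the conditions in Definition~\ref{defn:Dcolourings} decouple across components: a flowing $k$-colouring of $G$ is exactly a choice, for each $i$, of a flowing $k$-colouring of $G_i$. Therefore $\tau_G(k) = \prod_{i=1}^n \tau_{G_i}(k)$.

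Finally, applying Theorem~\ref{thm:colouringtrees} to each tree $G_i$ gives $k\, T_{M_{G_i}}(1-k,0) = (-1)^{r(G_i)} \tau_{G_i}(k)$. Taking the product over $i = 1, \ldots, n$ yields
\[ k^{n}\, T_{M_G}(1-k,0) = \prod_{i=1}^n k\, T_{M_{G_i}}(1-k,0) = \prod_{i=1}^n (-1)^{r(G_i)} \tau_{G_i}(k) = (-1)^{r(G)} \tau_G(k)\ ,\]
which is the claimed identity since $n = p_0(G)$. There is essentially no obstacle here: the corollary is pure bookkeeping once Theorem~\ref{thm:colouringtrees} is in hand, so the only thing to check carefully is that flowing colourings of a disjoint union factor as a product over components, which is immediate from Definition~\ref{defn:Dcolourings}.
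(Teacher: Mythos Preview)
Your proof is correct and is exactly the argument the paper has in mind: the corollary is stated as ``another straightforward consequence'' of Theorem~\ref{thm:colouringtrees}, and the intended reasoning is precisely the decomposition into connected components together with Theorem~\ref{thm:components}, Equation~\eqref{prop:DSTutte}, additivity of rank, and the evident multiplicativity of $\tau$ over disjoint unions.
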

Our next goal is to extend what we showed so far to all digraphs satisfying the conditions of Theorem~\ref{thm:boom}.
For ease of notation, in the theorem below and its proof, for each graph $H$ we will write $r(H)$ \emph{en lieu} of $r_{M_H}(E(H))$.

\begin{theorem}\label{thm:colST} Let $G$ be a connected MP-digraph which is not a coherently oriented cycle. 
Then, for every spanning tree $S$ of $G$ of maximal rank (i.e.~$r(S)=r(G)$), the  relation 
\begin{equation}\label{eq:sptree}kT_{M_G}(1-k,0)=(-1)^{r(G)}\tau_S(k)\end{equation}
holds for all $k\geq 0$.
\end{theorem}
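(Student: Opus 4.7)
The plan is to reduce the theorem to Corollary~\ref{cor:colouringforests} applied to $S$ by showing that the specialisation $T_{M}(1-k,0)$ of the Tutte polynomial takes the same value on both $M_G$ and $M_S$. First, I would observe that $S$, being a subgraph of $G$, is automatically an MP-digraph by Remark~\ref{rmk:circ}, and since $S$ is a tree it is in fact an MP-tree; this makes Corollary~\ref{cor:colouringforests} applicable with $p_0(S)=1$. I would restrict attention to the loopless case, as loops in $G$ do not interact with the dynamical module decomposition we rely on.

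The key computational ingredient I would establish is the identity $T_{U_{1,n}}(x,0)=x$, valid for every $n\geq 1$. This is a direct calculation from Definition~\ref{def:Tutte} using $\sum_{j=0}^n \binom{n}{j}(-1)^j=0$ for $n\geq 1$. Combined with the multiplicativity of the Tutte polynomial under direct sums (Equation~\eqref{prop:DSTutte}) and the decomposition of $M_G$ as a direct sum of uniform matroids of type $U_{1,n_i}$ — which is provided by Corollary~\ref{thm:modulesmatroid} together with Corollary~\ref{cor:dinregions}, since $G$ is connected, loopless and not a coherently oriented cycle, so each of its dynamical modules is a sink, a source, or a single edge — this yields $T_{M_G}(1-k,0)=(1-k)^{r(G)}$, independently of the particular sizes $n_i$ of the modules. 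The same argument, applied to the MP-tree $S$, gives $T_{M_S}(1-k,0)=(1-k)^{r(S)}$; by the hypothesis $r(S)=r(G)$ these two specialisations coincide.

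To finish, I would apply Corollary~\ref{cor:colouringforests} to $S$ to get $kT_{M_S}(1-k,0)=(-1)^{r(S)}\tau_S(k)$, and chain the equalities
\[
kT_{M_G}(1-k,0) \;=\; kT_{M_S}(1-k,0) \;=\; (-1)^{r(S)}\tau_S(k) \;=\; (-1)^{r(G)}\tau_S(k),
\]
which is the required identity. The main subtle point is not any single computation but rather the insensitivity of $T_{M_G}(1-k,0)$ to the individual sizes of the dynamical modules, dependence on only their number (the rank); this is what makes the conclusion hold uniformly for \emph{every} spanning tree $S$ of maximal rank, even though $S$ and $G$ may have dynamical modules of quite different sizes.
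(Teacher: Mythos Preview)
Your proof is correct and takes a genuinely different route from the paper's. The paper argues by induction on $|E(G)|$ via deletion--contraction (Corollary~\ref{cor:tuttegraph}): if $M_G$ has a coloop it is contracted and Lemma~\ref{lem:colouringcontraction} is invoked; otherwise an edge $e\in E(G)\setminus E(S)$ is deleted, and one checks that $G\mpslash e$ acquires a loop so that $T_{M_G}(1-k,0)=T_{M_{G\setminus e}}(1-k,0)$, allowing the induction to proceed. You instead exploit the structural classification directly: Corollaries~\ref{thm:modulesmatroid} and~\ref{cor:modules} give $M_G\cong\bigoplus_i U_{1,n_i}$, and the elementary identity $T_{U_{1,n}}(x,0)=x$ yields $T_{M_G}(1-k,0)=(1-k)^{r(G)}$ outright; since the same holds for the MP-tree $S$ and $r(S)=r(G)$, everything collapses to Theorem~\ref{thm:colouringtrees}. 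Your argument is shorter and makes visible \emph{why} any maximal-rank spanning tree works --- the specialisation sees only the number of dynamical modules, not their sizes --- whereas the paper's proof stays within the deletion--contraction formalism of Section~5 and does not need to invoke the full module decomposition. One minor caveat: your sentence about loops is not accurate, since a loop contributes a $U_{0,1}$ summand and forces $T_{M_G}(x,0)=0$, which would falsify the identity; the paper's proof tacitly makes the same loopless assumption, so this is a shared imprecision rather than a gap in your argument relative to theirs.
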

\proof We proceed by induction on the number of edges. If $|E(G)|=0,1$ the assertion is trivial.

Note that if $e$ is a coloop, then $r(G / e)=r(G)-1$. Furthermore, if $S$ is a subgraph of $G$, then $r(S) = r_{M_G}(E(S))$. 

First, suppose that there exists $e\in E(G)$ which is a coloop of $M_G$. Since $e$ is is contained in every multipath of maximal rank, $e$ is also contained in every spanning tree $S$ of $G$ of maximal rank.
Observe now that, by Lemma~\ref{lem:treeisth}, if $S$ is a spanning tree of maximal rank in $G$, then $S / e$ is a spanning tree of maximal rank in $G/e$ -- as $e$ is a coloop  of $G$. Moreover, since $r(S)=r(G)$ and $P(S)$ is a subposet of $P(G)$, then $e$ is a coloop also for $S$. By Lemma~\ref{lem:colouringcontraction}, using the inductive hypothesis, we obtain: 
\begin{align*}
\tau_S(k) &= (k-1) \tau_{S/e}(k) \\
&= (k-1) (-1)^{r(G / e)} k T_{M_{G/e}}(1-k,0) 
\\ &= (-1)^{r(G )} k (1-k) T_{M_{G/e}}(1-k,0) \ . \end{align*} 
The thesis now follows, in this case, by Corollary~\ref{cor:tuttegraph}.

Suppose that $G$ has no coloops. If $G$ is a tree, the statement is a consequence of Theorem~\ref{thm:colouringtrees}. Therefore, we can assume that $G$ is not a tree. Consider any spanning tree $S$ of $G$ of maximal rank, and an edge $e=(v,w) \in E(G) \setminus E(S)$. By the choice of $e$, $S$ is a spanning tree also for $G \setminus e$, which is, in particular, a connected graph. Furthermore, since $e$ is not a coloop, we have the equality $r(G)=r(G \setminus e)$ and, consequently, $S$ is a spanning tree of $G \setminus e$ of maximal rank. 
Moreover, in view of Lemma~\ref{lem:treeisth}, we have that at least one between $\od_G(v)$ and $\id_G(w)$ is greater than $1$. Then, $G\mpslash e$ contains a loop, and consequently we get $T_G(1-k,0)=T_{G \setminus e}(1-k,0)$. By inductive hypothesis we obtain that 
\[k \, T_G(1-k,0)= k T_{G \setminus e}(1-k,0) = (-1)^{r(G \setminus e) } \tau_S(k)=(-1)^{r(G) } \tau_S(k) \ , \]
which concludes the proof.
\endproof

If $G$ is disconnected, with connected components $G_1, \dots, G_k$, we say that a collection of directed trees $S_1, \dots, S_k$ is a \emph{spanning forest} for $G$ if $S_i$ is a spanning tree for $G_i$, for each $i$. 
In view of Theorem~\ref{thm:components} and of Proposition~\ref{prop:DSTutte}, the following corollary is a direct consequence of Theorem~\ref{thm:colST}.

\begin{corollary}\label{cor:tuttecolouring} Let $G$ be an MP-digraph without coherently oriented cycles and let $M_G$ be the associated multipath matroid. Then, for every spanning forest $S$ for $G$ of maximal rank, we have 
\begin{equation}\label{eq:spforest}k^{p_0(G)}T_{M_G}(1-k,0)=(-1)^{r(G)}\tau_S(k) \ ,\end{equation}
where $p_0(G)$ is the number of connected components of $G$.
\end{corollary}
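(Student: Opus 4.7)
The plan is to reduce the disconnected case to the connected case handled by Theorem~\ref{thm:colST} by decomposing $G$ into its connected components. Write $G = G_1 \sqcup \cdots \sqcup G_p$ with $p = p_0(G)$. Since $G$ has no coherently oriented cycles, no $G_i$ is a coherently oriented cycle. By Theorem~\ref{thm:components}, we have
\[M_G = M_{G_1}\oplus\cdots\oplus M_{G_p},\]
so the rank is additive, $r(G) = \sum_i r(G_i)$, and by \eqref{prop:DSTutte} the Tutte polynomial factors as
\[T_{M_G}(x,y) = \prod_{i=1}^p T_{M_{G_i}}(x,y).\]

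Next I would observe that spanning forests and flowing colourings behave multiplicatively across components. By definition a spanning forest $S$ of $G$ of maximal rank decomposes as $S = S_1\sqcup\cdots\sqcup S_p$, where each $S_i$ is a spanning tree of $G_i$ with $r(S_i) = r(G_i)$ (since $r$ is additive on disjoint unions and maximality on each component is necessary and sufficient for maximality on the whole). A flowing $k$-colouring of $S$ is the same data as an independent choice of flowing $k$-colouring on each $S_i$ (the compatibility conditions of Definition~\ref{defn:Dcolourings} involve only edges, and there are no edges between components), hence
\[\tau_S(k) = \prod_{i=1}^p \tau_{S_i}(k).\]

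Finally I would apply Theorem~\ref{thm:colST} to each connected component $G_i$ that carries at least one edge, obtaining
\[kT_{M_{G_i}}(1-k,0) = (-1)^{r(G_i)}\tau_{S_i}(k).\]
For a component $G_i$ which is an isolated vertex, the equation reads $k\cdot 1 = (-1)^0 \cdot k$ directly, so it holds in the base case too. Multiplying the $p$ identities together and collecting the factors of $k$ on the left and the signs on the right yields
\[k^{p_0(G)}\, T_{M_G}(1-k,0) = (-1)^{r(G)}\,\tau_S(k),\]
which is \eqref{eq:spforest}. The only subtlety to check is the bookkeeping for isolated-vertex components, and the observation that an MP-digraph without coherently oriented cycles remains such on each connected component, so Theorem~\ref{thm:colST} is genuinely applicable component by component. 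There is no serious obstacle; the result is a packaging of Theorem~\ref{thm:colST} via the multiplicativity of $T_M$, $r$, $\tau$, and the behaviour of spanning forests under disjoint union.
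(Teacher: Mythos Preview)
Your proof is correct and follows exactly the route indicated by the paper: the authors simply state that the corollary is a direct consequence of Theorem~\ref{thm:colST} via Theorem~\ref{thm:components} and Equation~\eqref{prop:DSTutte}, and you have spelled out precisely this reduction, including the multiplicativity of $\tau$ and the decomposition of a maximal-rank spanning forest into maximal-rank spanning trees on each component.
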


The next example shows that equation~\eqref{eq:sptree} does not hold  if $S$ is a spanning tree of non-maximal rank.

\begin{example} The digraph $G$ in Figure~\ref{fig:excolorazioni} is an MP-digraph and has no flowing $k$-colourings for any~$k$. Since the maximal length of a multipath is $3$, we have  $r(G)=3$. It is possible to compute directly, for example using Theorem~\ref{thm:modules}, the Tutte polynomial of~$M_G$. In fact, $G$ decomposes into three dynamical modules: a linear sink, a linear source, and a single edge.  Therefore, the Tutte polynomial of~$M_G$ is $x(x+y)^2$. The directed spanning tree in Figure~\ref{fig:spanningtreeA} has maximal rank and has $k(k-1)^3$ flowing $k$-colourings. However, the spanning tree in Figure~\ref{fig:spanningtreeB} has rank~$2$ and has only $k(k-1)^2$ flowing $k$-colourings. 
\begin{figure}[h]
	\centering
	\begin{subfigure}[b]{0.3\textwidth}
	\centering
	 \begin{tikzpicture}[baseline=(current bounding box.center),scale =1.5]
	 	\tikzstyle{point}=[circle,thick,draw=black,fill=black,inner sep=0pt,minimum width=2pt,minimum height=2pt]
	 	\tikzstyle{arc}=[shorten >= 8pt,shorten <= 8pt,->, thick]
		
   \node (a1) at (1,0) {};
        \node (a2) at (2,0) {};
        \node (b1) at (1,1) {};
        \node (b2) at (2,1) {};
        \node (c2) at (3,0.5) {};
        
        \node[below left]  at (1,0) {};
        \node[below right] at (2,0) {};
        \node[above] at (1,1) {};
        \node[above] at (2,1) {};
        
        \draw[fill] (a1) circle (0.03);
        \draw[fill] (b1) circle (0.03);
        \draw[fill] (a2) circle (0.03);
        \draw[fill] (b2) circle (0.03);
        \draw[fill] (c2) circle (0.03);

        \draw[thick,-latex, red] (a1) -- (a2);
        \draw[thick,-latex, red] (b1) -- (b2);
        \draw[thick,-latex, red] (b2) -- (c2);
        \draw[thick,-latex, red] (b1) -- (a1);
        \draw[thick, gray, -latex] (a2) -- (c2);
        
	\end{tikzpicture}
				\caption{\phantom{A}}\label{fig:spanningtreeA}
\end{subfigure}
	\begin{subfigure}[b]{0.3\textwidth}
	\centering
	\begin{tikzpicture} [baseline=(current bounding box.center),scale =1.5]
		 \tikzstyle{point}=[circle,thick,draw=black,fill=black,inner sep=0pt,minimum width=2pt,minimum height=2pt]
		 \tikzstyle{arc}=[shorten >= 8pt,shorten <= 8pt,->, thick]
		
   \node (a1) at (1,0) {};
        \node (a2) at (2,0) {};
        \node (b1) at (1,1) {};
        \node (b2) at (2,1) {};
        \node (c2) at (3,0.5) {};
        
        \node[below left]  at (1,0) {};
        \node[below right] at (2,0) {};
        \node[above] at (1,1) {};
        \node[above] at (2,1) {};
        
        \draw[fill] (a1) circle (0.03);
        \draw[fill] (b1) circle (0.03);
        \draw[fill] (a2) circle (0.03);
        \draw[fill] (b2) circle (0.03);
        \draw[fill] (c2) circle (0.03);

        \draw[thick, gray, -latex] (a1) -- (a2);
        \draw[thick,-latex, red] (b1) -- (b2);
        \draw[thick,-latex, red] (b2) -- (c2);
        \draw[thick,-latex, red] (b1) -- (a1);
        \draw[thick,-latex, red] (a2) -- (c2);
        
	\end{tikzpicture}
				\caption{\phantom{B}}\label{fig:spanningtreeB}
\end{subfigure}
    \caption{}\label{fig:spanning tree}
\end{figure}
\end{example}
\section{The decategorification of multipath cohomology}

In this final section, we relate the Tutte polynomial of the multipath matroid with the (graded) Euler characteristic of multipath (co)homology.

Let $R$ be a commutative ring with identity, and $A$ an associative unital $R$-algebra.
Recall from Definition~\ref{def:multipaths} that $\Mult_i(G)$ denotes the set of multipaths of the digraph~$G$ with~$i$ edges. 
To each digraph $G$, we can associate the multipath cohomology of $G$. We briefly recall here its construction, referring to~\cite{primo} for the details. 

Given a multipath $ H\leq G$, to each connected component of~$H$ we associate a copy of~$A$. Then, we take the tensor product. More concretely, if $c_0, \dots , c_k$ are the connected components of~$H$, we define: 
\begin{equation}\label{eq:fun_obj}
	\mathcal{F}_{A}(   H)\coloneqq {A_{c_{0}}}  \otimes_R \cdots 
	\otimes_R  A_{c_k}\ ,\end{equation}
where $A_{c_{j}}$ is the copy of $A$ associated to the component $c_j$. Note that, while there might be a small issue in the choice of the ordering of the components, this can be fixed by choosing an ordering of the vertices of $G$. The isomorphism class of multipath homology does not depend on such order.
Then, we can define a (co)chain complex as 
\[ C^i_{\mu}(  G;A) \coloneqq \bigoplus_{\tiny\begin{matrix}
		   H\in P(  G)\\
		\ell(   H) = n
\end{matrix}}  {\mathcal{F}_A(H)}=\bigoplus_{H \in \Mult_i(G) }A^{\otimes p_0(H)}\ , 
\]
with differential $d_\mu$  defined using the multiplication in $A$. 

The homology of such cochain complex is called \emph{multipath cohomology}. A key feature of this construction is that, if $A$ is a graded $R$-algebra, then the cochain modules inherit a second grading, which is preserved by the differential.

Assume now that $A$ is a $\Z$-graded finitely-generated $R$-algebra, with $R$ an integral domain with trivial grading. The graded structure on~$A$ induces a natural structure of graded module on  each $C_\mu^i(G; A)$. We denote by $C_\mu^{i,j}(G, A)$ the ($R$-)submodule of $C_\mu^i(G, A)$ of degree $j$, and, as customary, we define the \emph{graded} (or \emph{quantum}) \emph{dimension} of $C_\mu^i(G, A)$ as follows: 
\[ \qdim (C_\mu^i(G; A)) \coloneqq \sum_{j} \dim_{Q(R)}(Q(R)\otimes_R C_\mu^{i,j}(G; A)) q^j \ ,\]
 where $Q(R)$ is the field of fractions of $R$.  The \emph{graded Euler-characteristic} is defined as
\[ \chi_\mu (G, q) = \sum_{i} (-1)^i \qdim (C_\mu^i(G, A)) \ .  \]
Recall the following general fact -- for instance, cf.~\cite[Theorem~2.44]{hatcher};

\begin{rmk}
Let $(C^{*},d)$ be a graded bounded cochain complex over $R$. Suppose that each $C^i$ is of finite rank and that the differential preserves the grading. Then, we have
\[ \sum_{i,j} (-1)^i  \dim_{Q(R)}\ (Q(R)\otimes_R C^{i,j}) q^j = \sum_{i,j} (-1)^i  \dim_{Q(R)}\ (Q(R)\otimes_R H^{i,j}(C^*,d)) q^j \ .\]
\end{rmk}

The following result is a consequence of a K\"unneth-type formula for the disjoint union of digraphs  -- see~\cite[Remark~3.1]{secondo} in the case of $A=\mathbb{K}$;

\begin{rmk}\label{rem:multChar}
Let $G_1$ and $G_2$ be graphs. Then, $\chi_\mu (G_1 \sqcup G_2, q)= \chi_\mu (G_1, q)  \chi_\mu (G_2, q)$.  \end{rmk}

 Before stating the main result of the section, observe that if we set $\alpha= \qdim(A)$, then the following formula for $ \chi_\mu (G, q)$ holds
\begin{equation}
\chi_\mu (G, \alpha) = \sum_{m \in \Mult(G) }(-1)^{|E(m)|} \alpha ^{p_0(m)} \ .
\end{equation}

Recall that for a digraph $G$, we denote by $r $ the rank function of its multipath matroid, and for a subdigraph $H$ of $G$ we denote by $n(H)=|E(H)|-r(H)$ the nullity of~$H$. 
    
\begin{theorem} Let $G$ be an MP-forest. Then,
\begin{equation*} \chi_\mu (G, \alpha) = (-1)^{r(G)} \alpha^{|E(G)| - r(G)} \alpha^{p_0(G)}T_{M_G}(1-\alpha, 1)  \ . \end{equation*}
\end{theorem}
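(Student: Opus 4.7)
The plan is to expand $T_{M_G}(1-\alpha,1)$ via Definition~\ref{def:Tutte} and match it term-by-term with the explicit sum formula for $\chi_\mu(G,\alpha)$ recalled just before the theorem. First, I would write
\begin{equation*}
T_{M_G}(1-\alpha,1) = \sum_{A\subseteq E(G)} (-\alpha)^{r(G)-r(A)}\,0^{|A|-r(A)}.
\end{equation*}
Under the convention $0^0=1$ and $0^k=0$ for $k>0$, only the subsets $A$ with $|A|=r(A)$ survive; these are precisely the independent sets of $M_G$, which, since $G$ is an MP-digraph, coincide with $\Mult(G)$. As $r(m)=|E(m)|$ for any independent $m$, this yields
\begin{equation*}
T_{M_G}(1-\alpha,1)= (-1)^{r(G)}\sum_{m\in\Mult(G)}(-1)^{|E(m)|}\alpha^{r(G)-|E(m)|}.
\end{equation*}

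Next, I would multiply by $(-1)^{r(G)}\alpha^{t(G)}$ with $t(G)=|E(G)|-r(G)+p_0(G)$, obtaining
\begin{equation*}
(-1)^{r(G)}\alpha^{t(G)}T_{M_G}(1-\alpha,1)=\sum_{m\in\Mult(G)}(-1)^{|E(m)|}\alpha^{|E(G)|+p_0(G)-|E(m)|}.
\end{equation*}
It then remains to recognise the exponent of $\alpha$ as $p_0(m)$. This is where the MP-forest hypothesis enters: since $G$ is a forest, $|V(G)|=|E(G)|+p_0(G)$. Every multipath $m$ is a spanning subgraph whose connected components are either isolated vertices or simple paths, hence $p_0(m)=|V(G)|-|E(m)|=|E(G)|+p_0(G)-|E(m)|$. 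Substituting, the right-hand side becomes $\sum_{m\in\Mult(G)} (-1)^{|E(m)|}\alpha^{p_0(m)}=\chi_\mu(G,\alpha)$, which is exactly the desired equality.

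The argument is essentially a bookkeeping exercise and I do not expect a substantial obstacle. The only delicate point is the use of the forest hypothesis to convert $|V(G)|$ into $|E(G)|+p_0(G)$; for a general MP-digraph containing a coherently oriented cycle as a connected component, the Euler relation fails and the naive exponent matching breaks down, consistent with the restriction to MP-forests in the statement. One could alternatively organise the proof by induction on $|E(G)|$ using the deletion/contraction recursion in Corollary~\ref{cor:tuttegraph} and a corresponding long-exact-sequence decomposition of $\chi_\mu$, but the direct sum comparison above appears more transparent.
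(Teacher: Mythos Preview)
Your proof is correct and follows essentially the same approach as the paper: expand $T_{M_G}(1-\alpha,1)$ as a sum over subsets, observe that only independents (i.e.~multipaths) survive, and use the forest hypothesis to identify the exponent of $\alpha$ with $p_0(m)$. The only organisational difference is that the paper first reduces to the connected case via multiplicativity and then uses $p_0(m)=|E(G)|-|E(m)|+1$ for a tree, whereas you handle the general forest directly via the Euler relation $|V(G)|=|E(G)|+p_0(G)$; this is a mild streamlining of the same argument.
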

\proof
Recall that the Tutte polynomial of the multipath matroid associated to the disjoint union of two MP-digraphs, say $G_1$ and $G_2$, is the product of the Tutte polynomials of $M_{G_1}$ and $M_{G_2}$, respectively -- cf.~Equation~\eqref{prop:DSTutte} and the proof of Theorem~\ref{thm:modulesmatroid}.
Furthermore, the quantities $r$, $E$, and $p_0$ are also additive under disjoint union of digraphs. Therefore, by Remark~\ref{rem:multChar}, we can reduce to the case of $G$ connected; i.e.~we assume $G$ to be a directed tree.
Observe that, as $G$ is a directed tree, then 
if $m$ is a multipath of $G$ we have $$p_0(m)=|E(G)|-|E(m)|+1 \ .$$
Consequently 
\begin{align*}
\chi_\mu (G, q) &= \sum_{m \in \Mult(G) }(-1)^{|E(m)|}\alpha^{p_0(m)} \\
&=\sum_{m \in \Mult(G) }(-1)^{|E(m)|}\alpha^{|E(G)|-|E(m)|+1} \\
 &= \alpha^{|E(G)|+1} \sum_{m \in \Mult(G)} \frac{(-1)^{|E(m)|}}{\alpha^{|E(m)|}}\ . 
\end{align*}
On the other hand, evaluating the Tutte polynomial
$T_{M_G}(x,y)$ at $x=1-q$, and~$y=1$, we have 
 \[
 T_{M_G}(1-q, 1)=  (-1)^{r(G)} q^{r(G)} \sum_{\tiny{\begin{matrix} A \subset E(G)\\ n(A)=0\end{matrix}}} \frac{(-1)^{r(A)}}{q^{r(A)}} \ . \]
Observe now that $n(A)=0$ if and only if $A$ is the set of edges of a multipath. Hence, we get
\begin{align*}
 T_{M_G}(1-q, 1)= (-1)^{r(G)} q^{r(G)}   \sum_{m \in \Mult(G)} \frac{(-1)^{r(m)}}{q^{r(m)}} \ .
\end{align*}
As $r(H)=|E(H)|$ if $H$ is a multipath of $G$, the statement follows after setting $q=\alpha$.
\endproof

\bibliographystyle{alpha}
\bibliography{bibliography}

\end{document}